\documentclass[twocolumn, amsthm]{autart}    

\usepackage{amsmath}

\usepackage{graphicx}
\usepackage{cite}

\usepackage{amsmath,amssymb, amsfonts,bbm}
\usepackage{xcolor}
\usepackage{algorithm}
\usepackage{algpseudocode}

\algsetblockdefx[NoIndentFor]{NoIndentFor}{EndNoIndentFor}%
  {}{0pt}%
  [1]{\algorithmicfor\ #1\ \algorithmicdo}%
  {\algorithmicend\ \algorithmicfor}
\usepackage{dblfloatfix}

\newcounter{MYtempeqncnt}

\newtheorem{theorem}{Theorem}
\newtheorem{problem}{Problem}
\newtheorem{remark}{Remark}
\newtheorem{lemma}{Lemma}
\newtheorem{assumption}{Assumption}
\newtheorem{definition}{Definition}

\begin{document}

\begin{frontmatter}

\title{A No-Regret Framework for Adaptive Incentive Design} 

\author[KTH]{Georgios Vasileiou}\ead{geovas@kth.se}, 
\author[KTH]{Lantian Zhang}\ead{lantian@kth.se},       
\author[KTH]{Silun Zhang}\ead{silunz@kth.se}

\address[KTH]{Department of Mathematics, KTH Royal Institute of Technology, Stockholm, 10044, Sweden.}

\begin{keyword}                         
Adaptive control of multi-agent systems; Game theory; Optimization-based estimation and control; Data-driven control theory       
\end{keyword}

\begin{abstract}                         
Incentive design studies how a central authority can influence strategic agents through payments, subsidies, or taxes, so that individual objectives align with collective welfare. This paper introduces a No-Regret Adaptive Incentive Design (RAID) framework for nonlinear games with continuous action spaces and private agent costs. In this framework, the authority (planner) designs incentives that regulate the Nash equilibrium toward a socially optimal action profile, while simultaneously learning agents’ unknown preferences from repeated strategic responses. We formulate the RAID problem and construct a least-squares estimator whose strong consistency requires only diminishing excitation. 
Leveraging this weak excitation requirement, we propose a switching incentive policy that alternates between probing (exploration) and estimate-based (exploitation) incentives. The resulting policy achieves an $O(t^{-0.5})$ parameter estimation rate and accumulates $O(t^{0.5}\log t)$
squared social-cost regret, almost surely.
We further extend the framework to an endogenous-noise response model, where standard least-squares estimation is biased due to an error-in-variables correlation between the noise and agent responses.
We utilize a repeated-sampling estimator and corresponding switching policy that retain the same almost-sure convergence and regret rates. 
Numerical experiments validate the effectiveness and 
predicted convergence rates of the method.
\end{abstract}

\end{frontmatter}

\section{Introduction}\label{sec:Introduction}
Modern infrastructure often interacts with strategic participants. Electricity consumers decide when to use power, drivers choose routes, and data owners determine what information to contribute.  In each case, a central authority cannot directly prescribe the participants' actions but instead influences them through incentive signals such as pricing, subsidies, tolls, and taxes. \textit{Incentive design} studies how such signals should be chosen to align individual strategic behavior with system-level objectives. This problem is central to a wide range of applications, including adaptive pricing in demand-response markets~\cite{satchidanandan23atwostage, liAdaptivePricingOptimal2025}, promoting eco-driving~\cite{umar26ecodriving,povedaDistributedAdaptivePricing, barreraDynamicIncentives, grootSystemOptimalRoutingTraffic2015},
and contract pricing in data markets~\cite{bonatti24sellinginfo}. 

Incentive design is inherently hierarchical. A principal first commits to an incentive rule, agents then choose actions that optimize their local objectives, and the principal subsequently implements the corresponding subsidies or taxes. This structure resembles a Stackelberg game, but with a crucial distinction: the principal does not commit to a single action. Instead, the principal commits to a \textit{policy}---a map from agents' actions to monetary transfers. This distinction defines the class of reverse or inverse Stackelberg games~\cite{grootReverseStackelbergGames2012, grootReverseStackelbergGames2012a, olsderPhenomenaInverseStackelberg2009, olsderPhenomenaInverseStackelberg2009a}. From a control-theoretic perspective, incentive design is an equilibrium-regulation problem. A feedback incentive policy maps observed actions into payments, thereby shaping the game faced by the agents and steering its equilibrium toward a desired operating point~\cite{hoControltheoreticViewIncentives1980, basarAffineIncentiveSchemes1984}. 

Beyond the reverse Stackelberg formulation, incentive design is especially concerned with scenarios of \textit{information asymmetry}. This information asymmetry may arise because actions are imperfectly observed, a setting known as moral hazard, or because the principal does not know the agents' preferences, a setting known as adverse selection (see \cite{picardDesignIncentiveSchemes1987, ratliffPerspectiveIncentiveDesign2019} and the references therein). We focus particularly on the latter case. If agents' costs were known, the principal could use monetary transfers to reshape their effective objectives and implement an equilibrium consistent with the social optimum. In practice, however, agents' preferences are often private. Preference learning and equilibrium regulation then become coupled. In this case, the principal needs to infer agents' preferences while simultaneously choosing incentives to regulate a more desirable action~\cite{ratliffPerspectiveIncentiveDesign2019,ratliffAdaptiveIncentiveDesign2021}. When the principal's objective depends only on agents' actions, a Social-Gradient method is proposed to bypass preference learning~\cite{vasileiou2026incentivedesignhypergradientssocialgradient}. Work \cite{maheshwariAdaptiveIncentiveDesign2024} provides a two-timescale incentive scheme when the externality is observable.

This work develops a no-regret framework for incentive design in continuous action spaces, in which the principal learns agents' preferences while regulating the equilibrium induced by their strategic responses. The framework relaxes standard excitation requirements and quantifies the exploration-exploitation tradeoff in repeated principal-agent interactions. The resulting closed-loop incentives are robust to model uncertainty and measurement noise, and steer the incentive-action pair toward the social optimum. Moreover, the proposed framework achieves sublinear regret almost surely relative to a perfectly informed oracle.

\subsection{Related Works}\label{sec:Intro_related_work}

\subsubsection{Stackelberg, Reverse Stackelberg, and Control-theoretic Incentives}
Classical Stackelberg games provide the standard model of leader-follower decision-making~\cite{fudenbergGameTheory1991}. Reverse (or inverse) Stackelberg games extend this model by allowing the leader to announce a response-dependent policy rather than a single action~\cite{grootReverseStackelbergGames2012, grootReverseStackelbergGames2012a, olsderPhenomenaInverseStackelberg2009, olsderPhenomenaInverseStackelberg2009a}. 
By optimizing over policies, the principal gains additional leverage to steer equilibrium behavior~\cite{zhangSolutionStackelbergMultifollower, calderonePricingCoordinationOpen2014} and to infer hidden information~\cite{hoInformationStructure1981}. This policy viewpoint is closely related to the control-theoretic view of incentives, introduced by Ho et al.~\cite{hoControltheoreticViewIncentives1980}, in which incentives act as feedback mechanisms for coordinating strategic decision makers. This framework was subsequently developed through study of incentive controllability~\cite{hoControltheoreticViewIncentives1980, zhengExistenceDerivationOptimal1982a, changConceptInducibleRegion1982}, feedback and closed-loop incentive structures~\cite{basarClosedloopStackelbergStrategies1979a, tolwinskiClosedloopStackelbergSolution1981}, multilevel systems~\cite{cruzLeaderfollowerStrategiesMultilevel1978, simaanStackelbergSolutionGames1973}, and incentive credibility~\cite{hoCredibilityRationalityPlayers1982, luhCredibilityStackelbergGames1984}. Ho et al.~\cite{hoInformationStructure1981} have also investigated the asymmetric information structure that occurs in stochastic incentive problems, a direction that was expanded upon in~\cite{luhCredibilityStackelbergGames1984, xiaopingliuOptimalIncentiveStrategy1992, tuPerformanceInformativenessLinearquadratic1988,chen2025activeinversemethodsstackelberg}. This work builds on this foundational control-theoretic literature. Rather than characterizing solutions to a known Stackelberg or reverse Stackelberg game, we investigate nonlinear games whose utilities are unknown to the principal.  The resulting problem has an adaptive estimation-and-control structure, in which the policy must elicit informative agent responses for preference learning while regulating those responses toward the social optimum.

\subsubsection{Adaptive Incentives and Stackelberg Learning}
Recent work has increasingly utilized learning-based formulations of repeated Stackelberg games to develop adaptive incentive mechanisms. 
Works such as~\cite{balcanCommitmentRegretsOnline2015a, laufferNoRegretLearningDynamic2024} describe adaptive policies accompanied by no-regret guarantees for mixed (probabilistic) strategies over games with finite action and type spaces.  Equilibrium steering in Bayesian normal-form games with bi-matrix payoffs is investigated in~\cite{yorulmazSoftInducementFramework2025}, where the principal guides learning agents toward desired action profiles with sublinear regret. 
In this context, information asymmetry is regarded as a stochastic variable that informs the utility of the acting player~\cite{sessaContextualGamesMultiAgent2021}. Building on this viewpoint,
~\cite{harrisRegretMinimizationStackelberg2024}
shows that no-regret learning is impossible in a repeated Stackelberg setting with adversarial context, yet sublinear regret may be achieved under a stochastic context or stochastic agent assumption. By contrast, the setting studied here is a continuous-action nonlinear game with an unknown continuous type matrix appearing in the agents' marginal costs. The principal not only optimizes its own objective but also identifies the agents' cost gradients from equilibrium responses and uses the estimates to synthesize incentives. The resulting problem is closer to adaptive control and stochastic regression~\cite{zhang2026stochasticadaptivecontrolsystems,vanweerelt2025selfidentifyinginternalmodelbasedonline,zhang2025onlinelearningnonlineardynamical}, but with the regressors generated endogenously by an incentive-induced Nash equilibrium.  

\subsubsection{Utility Learning and Inverse Games}
Adaptive incentive design is also closely related to the objective of \textit{utility learning}, which aims to infer latent preferences from observed behavior. In economics, this objective is often encountered in revealed preference analyses~\cite{varianRevealedPreference2006, varianNonparametricApproachDemand1982, afriatConstructionUtilityFunctions1967}, and extended to the multi-agent setting through inverse game theory~\cite{kuleshovInverseGameTheory2015, liseEstimatingGameTheoretic2001} and inverse reinforcement learning~\cite{ngAlgorithmsInverseReinforcementLearning}.  The key distinction from utility learning is that the principal actively influences the informativeness and bias of the observed data. Consequently, the principal's ability to recover utility functions hinges on actively designing informative experiments rather than learning from fixed data that describe equilibrium conditions.

\subsection{Contributions}

We introduce No-Regret Adaptive Incentive Design (RAID), in which the principal learns agents’ preferences while regulating strategic behavior toward the social optimum. Our main contributions are as follows. 

First, we investigate incentive design for an $n$-agent nonlinear game and show that, under the diagonal strict convexity condition, an affine incentive is both sufficient 
and optimal within the class of convex incentives that depend only on each agent's own action (Lemma \ref{lem:linear_inccentive_is_optimal}). We further establish that, under such an incentive, the induced response map remains unique and is a diffeomorphism from an (unknown) informative incentive region onto the feasible action space (Lemma~\ref{lemma:NE_bijection_mapping}). This permits an estimator that is conditioned on observations of informative agent responses.

Second, without knowledge of the informative region, we develop a preference estimator equipped with an indicator function that selects all informative responses. We prove that this estimator is strongly consistent almost surely under the minimal excitation condition~\cite{laiLeastSquaresEstimates1982} on the information matrix. This substantially relaxes the persistence-of-excitation conditions commonly adopted in the incentive design literature and enables active learning and incentive design to be carried out simultaneously with no regret.

Third, we address the dual problem of preference estimation and incentive commitment. To this end, we propose a switching-based incentive design algorithm (Algorithm \ref{alg:one}) that alternates between probing incentives for excitation (exploration) and estimate-based incentives for regret minimization (exploitation). We prove that the resulting policy is asymptotically optimal and achieves an $O(t^{-0.5})$ parameter estimation error and an $O(t^{0.5} \log t)$ squared social-cost regret almost surely.

Fourth, we extend RAID to settings with endogenous perturbations in agent costs. This makes the standard least-squares estimator biased and leads to an Error-in-Variables (EIV) regression problem. We address adaptive incentive design in such scenarios by developing a repeated-sampling estimator together with a switching-based incentive mechanism (Algorithm \ref{alg:two}). When the action space is sufficiently large, the proposed algorithm is shown to retain the same almost-sure estimation and regret rates.

\subsection{Paper Organization and Notations}

Section~\ref{sec:preliminaries} formalizes the incentive design problem and establishes the optimality of affine incentives. Section~\ref{sec:problem_formulation} introduces the agents' response model and formulates the no-Regret Adaptive Incentive Design problem. Section~\ref{sec:measurement_noise} designs a least-squares estimator for the measurement-noise setting, and proposes a switching incentive design algorithm, proving the almost-sure type estimation and regret rates. Section \ref{sec:endogenous_noise} extends RAID to the endogenous-noise setting and introduces a repeated-sampling estimator. The section describes a modified algorithm and derives the corresponding consistency and regret guarantees. Section~\ref{sec:numerical_examples} provides numerical experiments that validate theoretical predictions, and Section~\ref{sec:conclusion} concludes and discusses directions for future work.

\noindent\textbf{Notations.}
Given $n \in \mathbb{N}$, let $[n] = \{1, 2, \ldots, n\}$. For a vector $x \in \mathbb{R}^n$, denote by $\|x\|_2$ and $\|x\|_\infty$ the Euclidean and $\ell_\infty$ norms, respectively. 
For a matrix $A \in \mathbb{R}^{n \times m}$, denote by $\|A\|_2$ the induced operator norm, $\|A\|_\infty$ the max-norm, and $\|A\|_F$ the Frobenius norm. 
For a vector-valued map $f:\mathbb{R}^n \rightarrow \mathbb{R}^m$ with $f(x) = (f_1(x), \, \ldots,\,  f_m(x))^\top$, let ${\nabla} f: {\mathbb{R}}^n \to {\mathbb{R}}^{m\times n}$ denote its Jacobian matrix, i.e., ${\nabla} f(x)= [\frac{\partial f_i}{\partial x_j}]_{ij} $. For a square matrix $A$, define $\operatorname{Sym}(A) = \frac{1}{2}(A + A^\top)$ to be the symmetrization of $A$. 
We denote $f(t) = O(g(t))$ if there exist $c,\, t_0> 0$ such that $|f(t)|\leq c|g(t)|$ for all $t\geq t_0$, and
$f(t) = \Omega(g(t))$ if $|f(t)|\geq c|g(t)|$ for all sufficiently large $t$. Moreover, we write $f(t) = \Theta(g(t))$ if both  bounds hold simultaneously, and $f(t) = o(g(t))$ if $ \lim_{t\to\infty}f(t) /g(t) \to 0$. For a multi-index $a = (a_1, \ldots, a_n) \in \mathbb N^n$ and $x\in {\mathbb{R}}^n$, we denote the index total degree $|a| = \sum_{i=1}^n a_i$ and let $x^a = x_1^{a_1}x_2^{a_2}\ldots x_n^{a_n}$. $C^k(\mathcal{X},\mathcal{Y})$ denotes the space of $k$-times continuously differentiable maps from $\mathcal{X}$ to $\mathcal{Y}$.

\section{Preliminaries}
\label{sec:preliminaries}

\subsection{Incentive Design Problems}
Incentive Design (ID) involves a single \textit{system planner} (principal) and $n$ \textit{agents} (players). 
The agents play a noncooperative game in which each agent $i\in [n]$ minimizes its own cost by choosing a strategy $x_i \in \mathcal X_i$, with $\mathcal{X}_i\subset \mathbb R$ a compact, convex set.~\footnote{For notational simplicity, we assume scalar agent strategies. Results can be extended to arbitrary finite-dimensional strategy spaces.} 
To influence agents' behavior, the system planner declares an incentive law $\gamma_i: \mathcal X \to \mathbb R$ to each agent $i$, where $\mathcal X \!=\!\prod_{i\in[n]}\!\mathcal X_i$. This incentive can be regarded as a reward for the agent when $\gamma_i(x)\leq 0$, and as a penalty (or tax) otherwise. Denote the collective strategy and incentive by $x=(x_i)_{i\in[n]}$ and $\gamma(x)=(\gamma_i(x))_{i\in[n]}$, respectively.

Given an incentive $\gamma: \mathcal X \to \mathbb R^n$, each agent $i$ aims to minimize the cost
\begin{equation}\label{eq:indiv_cost}
c^{\gamma}_i(x) = \ell_i(x) + \gamma_i(x), \qquad x\in\mathcal X, 
\end{equation}
where $\ell_i: \mathcal X \to \mathbb R$ is agent $i$'s \textit{nominal cost}, representing its inherent preference over the strategy $x$.
The additive structure in \eqref{eq:indiv_cost}, whereby an incentive (e.g., payment, wage, or monetary transfer) is added to the nominal cost, is standard in incentive and contract theory\cite{boltonContractTheory2005}.

The next definition characterizes the Nash Equilibrium of the incentivized game played among the agents.
\begin{definition}[Incentivized Nash Equilibrium~-~INE]
Given an incentive law $\gamma(\cdot)$, we say a strategy $x\in \mathcal X$ is an \textit{Incentivized Nash Equilibrium} (INE) if, for each $i\in[n]$, 
\[
c_i^\gamma(x_i, x_{-i})\leq c_i^\gamma(x_i', x_{-i}), \quad \forall  x_i' \in \mathcal X_i,
\]
where $x_{-i}:= (x_j)_{j\in[n]\setminus\{i\}}$.
\end{definition}

On the system planner's side, her objective is to minimize a social cost function $\Psi: \mathcal X \times \mathbb R^n \to \mathbb R$, which is a function of the joint agent strategy and the payment made. When $\Psi$ is Lipschitz, strictly convex, and coercive in its second argument, it has a unique minimizer
\[
(x^{\dagger},\, u^\dagger)=\operatorname*{arg\,min}_{x\in \mathcal X,\, u \in \mathbb R^n} \Psi(x,u).
\]

\begin{assumption}\label{ass:feasibility}
    Social cost $\Psi$ is ${L_\Psi}$-Lipschitz, strictly convex in both arguments, and coercive in its second argument. The desired agent strategy $x^\dagger \in \operatorname{int} \mathcal{X}$.
\end{assumption}

We now state the \textit{Incentive Design }(ID) problem.
\begin{problem}[Incentive Design -  ID]\label{problem:ID}
Given the desired action profile $(x^{\dagger},u^\dagger)$, find an incentive law $\gamma: \mathcal X \to \mathbb R^{n}$ such that $(i)$ $x^\dagger$ is an INE in the induced game with costs~\eqref{eq:indiv_cost}; and $(ii)$ $\gamma(x^\dagger)= u^\dagger$, i.e., the implemented incentive matches.

\end{problem}

In Problem \ref{problem:ID}, the exact social optimum $(x^{\dagger},\, u^\dagger)$ is always achievable, unlike in classical Stackelberg models, because the additive-cost formulation allows for cost reshaping as needed. The incentive design problem was first studied from a control-theoretic perspective in~\cite{hoControltheoreticViewIncentives1980}.

\subsection{Optimality of Affine Incentive Maps}
Linear or affine incentives, as the simplest incentive scheme, are widely used in incentive design (see, e.g., ~\cite{maheshwariAdaptiveIncentiveDesign2024, liSociallyOptimalEnergy2024}). Here we show that, within the class of separable convex functions, affine incentives are indeed optimal for Problem \ref{problem:ID}.
Define the incentive mapping class \(\Gamma =\{\gamma\in \mathcal C^2 ( \mathcal X, \mathbb R^n): \frac{\partial}{\partial x_j}\gamma_i(x)=0, \frac{\partial^2}{\partial x_i^2}\gamma_i(x)\geq 0, \forall i\neq j, \forall x\in \mathcal X\}.\) 
For costs $\ell_i, c_i^\gamma\in \mathcal C^2(\mathcal X, \mathbb R)$,
define the \textit{pseudo-gradient} of the nominal and incentivized games, respectively, as
\begin{equation}
\label{eq:pseudogradient}
G_0(x) = \begin{bmatrix}
    \frac{\partial }{\partial x_1} \ell_1 (x)\\
    \frac{\partial }{\partial x_2} \ell_2 (x)\\
    \vdots\\
    \frac{\partial }{\partial x_n} \ell_n (x)
\end{bmatrix},\
G_\gamma(x) = \begin{bmatrix}
    \frac{\partial }{\partial x_1} c_1^\gamma (x)\\
    \frac{\partial }{\partial x_2} c_2^\gamma (x)\\
    \vdots\\
    \frac{\partial }{\partial x_n} c_n^\gamma (x)
\end{bmatrix}.
\end{equation}
In general, $\nabla G_0(x)$ and $\nabla G_\gamma(x)$ are not symmetric.

\begin{definition}
The nominal costs $\{\ell_i\}_{i\in [n]}$ are diagonally strictly convex if there exists a positive definite diagonal matrix $R \in {\mathbb{R}}^{n\times n}_+$ such that  $\operatorname{Sym}(R \nabla G_0(x)) \succ 0, \forall x \in {\mathcal{X}}$.
\end{definition}
Diagonal strict convexity of the nominal costs is sufficient for existence and uniqueness of the Nash equilibrium in the nominal game~\cite{rosenExistenceUniquenessEquilibrium1965a}. The next lemma then shows that an affine incentive law is optimal for Problem~\ref{problem:ID} in $\Gamma$.

\begin{lemma}
\label{lem:linear_inccentive_is_optimal}
    Suppose that $\ell_i \in \mathcal C^2(\mathcal X, \mathbb R)$, and $\{\ell_i\}_{i\in[n]}$ are diagonally strictly convex. Then the affine incentive law
\begin{equation}\label{eq:optimal_policy_general_0}
\gamma_i^*(x) =p_i^* x_i + (u_i^\dagger- p_i^* x_i^\dagger), \quad i\in[n],
\end{equation}
with $p_i^*:= -\frac{\partial}{\partial x_i} \ell_i(x^\dagger)$, is an optimal solution to Problem \ref{problem:ID}, in the sense that, for any other solution $\gamma \in \Gamma$,
\[
\gamma_i^*(x)\leq \gamma_i(x), \quad  \forall x\in \mathcal X, \, \forall i\in [n],
\]
i.e., $\gamma^*$ minimizes implemented incentives for all agent actions.
\end{lemma}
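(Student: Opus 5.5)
We establish two facts: that $\gamma^*$ solves Problem~\ref{problem:ID}, and that it is pointwise minimal among solutions in $\Gamma$.

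\textbf{Feasibility.} By construction $\gamma_i^*(x_i^\dagger)=u_i^\dagger$, so condition $(ii)$ holds. For condition $(i)$, note that $\gamma^*$ is affine and separable, so $\nabla G_{\gamma^*}(x)=\nabla G_0(x)$ and the incentivized game inherits diagonal strict convexity. In particular each $c_i^{\gamma^*}(\cdot,x_{-i})$ is strictly convex in $x_i$, since the diagonal entries of $\operatorname{Sym}(R\nabla G_0)$ are $R_{ii}\,\partial^2_{x_i}\ell_i>0$. The first-order condition at $x^\dagger$ reads
\[
\tfrac{\partial}{\partial x_i}c_i^{\gamma^*}(x^\dagger)=\tfrac{\partial}{\partial x_i}\ell_i(x^\dagger)+p_i^*=0 .
\]
Because $x^\dagger\in\operatorname{int}\mathcal X$ (Assumption~\ref{ass:feasibility}), this stationarity together with convexity in $x_i$ shows that $x_i^\dagger$ is a best response to $x_{-i}^\dagger$ for every $i$. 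Hence $x^\dagger$ is an INE. Uniqueness follows from Rosen's result, although Problem~\ref{problem:ID} does not require it.

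\textbf{Minimality.} Take any solution $\gamma\in\Gamma$. Separability means $\gamma_i$ depends only on $x_i$; write $\gamma_i(x)=g_i(x_i)$ with $g_i$ convex and $C^2$. Since $x^\dagger$ is an INE and $x_i^\dagger$ is interior, the map $x_i\mapsto \ell_i(x_i,x_{-i}^\dagger)+g_i(x_i)$ attains its minimum over $\mathcal X_i$ at an interior point. Its derivative therefore vanishes there, which gives $g_i'(x_i^\dagger)=-\partial_{x_i}\ell_i(x^\dagger)=p_i^*$. Condition $(ii)$ gives $g_i(x_i^\dagger)=u_i^\dagger$. The affine function $\gamma_i^*$ is thus exactly the tangent line of $g_i$ at $x_i^\dagger$. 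Convexity of $g_i$, i.e.\ $g_i''\ge0$ on the convex set $\mathcal X_i$, yields the supporting-hyperplane inequality
\[
g_i(x_i)\ge g_i(x_i^\dagger)+g_i'(x_i^\dagger)(x_i-x_i^\dagger)=\gamma_i^*(x)
\]
for all $x\in\mathcal X$. This is the claim.

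\textbf{Main obstacle.} The only delicate point is justifying the first-order conditions: the INE definition is a global minimization over the compact set $\mathcal X_i$. We need interiority of $x^\dagger$ to get equality rather than a variational inequality, and this is exactly what Assumption~\ref{ass:feasibility} supplies. We must also check that the diagonal strict convexity hypothesis really gives strict convexity of each agent's own cost, since it is the own-cost convexity that makes stationarity sufficient in the feasibility step. That is immediate from $R$ being diagonal positive definite.
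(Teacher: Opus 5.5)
Your proof is correct and follows the same route as the paper. For feasibility, you check that $G_{\gamma^*}(x^\dagger)=0$ and $\gamma^*(x^\dagger)=u^\dagger$; for minimality, you observe that $\gamma_i^*$ is the tangent line of the convex, separable $\gamma_i$ at $x_i^\dagger$. The only difference is that you spell out two steps the paper leaves implicit: that own-cost convexity plus stationarity makes $x^\dagger$ an INE (where the paper invokes Rosen's theorem), and that interiority of $x^\dagger$ forces $\partial_{x_i}\gamma_i(x^\dagger)=p_i^*$ for any competing solution.
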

\begin{proof}
The proof is given in 
Appendix~\ref{app:proofs_auxiliary}.
\end{proof}

Evaluating incentives in $\Gamma$ does not require observing other agents' strategies, which implies the principal provides agents with direct guidance on action. Moreover, convexity of each $\gamma_i$ ensures that  well-posedness of the induced game (existence and uniqueness of the INE) is independent of each particular incentive designed. The optimality of affine incentives has also been studied for the single-follower setting with 
quadratic costs from a robustness perspective \cite{xiaopingliuOptimalIncentiveStrategy1992}.

\section{Problem Formulation}
\label{sec:problem_formulation}

A fundamental challenge in incentive design is the \textit{information asymmetry} that arises when the agents' nominal costs $\ell_i$ are unknown to the planner. It
prevents the planner from directly imposing the optimal incentive $\gamma^*$ given in \eqref{eq:optimal_policy_general_0}. Instead, the planner must learn agents’ preferences from repeated interaction and observation of their responses. 
To this end, the planner models the agents’ marginal nominal costs through the parametrization
\begin{equation}
\label{eq:parametrization}
\frac{\partial \ell_i(x)}{\partial x_i} = \theta_i^{*\top} \Phi(x),
\end{equation}
where $\theta_i^* \in  \mathbb R^D$, called the \textit{type parameter}, is private to agent $i$ and unavailable to the planner. The feature map $\Phi:{\mathcal{X}} \rightarrow \mathbb R^D$ collects all monomials of $n$ variables up to a selected degree $d$ in the form
\begin{equation}
\label{eq:Phi_map}
\Phi(x) = \begin{bmatrix}
\phi_0(x) & \phi_1(x)& \ldots & \phi_d(x)
\end{bmatrix}^\top,
\end{equation}
where $\phi_k(x) = ( x^a)_{a \in \mathcal{A}_k}$, $\mathcal{A}_k = \{a \in \mathbb N^n:\, 
a^T \mathbbm{1} = k\}$. Given the number of agents $n$ and the highest total degree of monomials $d$, the dimension of $\Phi$ is $D = \binom{n+d}{d}$.

\begin{remark}
    Parametrization 
\eqref{eq:parametrization} captures only those terms in $\ell_i(x)$ that depend on $x_i$, since the other terms are strategically irrelevant to agent $i$'s decision-making.
\end{remark}

Let $\Theta^* = (\theta_1^*, \theta_2^*, \ldots, \theta_n^*)^\top \in {\mathbb{R}}^{n \times D}$. We then require the following regularity assumption on the type matrix $\Theta^*$.
\begin{assumption}
\label{ass:types}
There exist a constant $m>0$, and a diagonal matrix
$R \in \mathbb R_+^{n \times n}$ with positive entries such that $\Theta^*\in \mathcal{S}_{\theta}$, where the admissible type set 
\[ \mathcal{S}_{\theta} =\{\Theta \!\in\! {\mathbb{R}}^{n \times D} \!: \frac{1}{2}m\mathbb I \preceq \operatorname{Sym}(R\Theta \nabla\Phi(x)),\forall x \!\in\! \mathcal{X}\}.\]
\end{assumption}

The type set $\mathcal{S}_{\theta}$ is closed and convex.
By Lemma~\ref{lem:linear_inccentive_is_optimal}, the optimal incentive that solves Problem~\ref{problem:ID} admits an affine form.  Therefore, we restrict our attention to incentive laws of the form
\begin{equation}\label{eq:optimal_policy}
\gamma_i(x, p)= p_i x_i + (u_i^\dagger- p_i x_i^\dagger), \quad i\in[n],
\end{equation}
where the collective incentive parameter $p=(p_i)_{i\in[n]} \in \mathbb{R}^n$. In the rest of the paper, whenever no ambiguity arises, we refer to incentive vector $p\in \mathbb R^n$ interchangeably with the corresponding law $\gamma(\cdot) = (\gamma_i(\cdot))_{i\in[n]}$ in \eqref{eq:optimal_policy} parametrized by $p$, as the incentive issued by the planner. Observe that due to~\eqref{eq:optimal_policy}, the pseudo-gradient maps in~\eqref{eq:pseudogradient} satisfy $G_\gamma(x) = G_0(x) + p$. 

\subsection{Response Maps}
We call a map $x^*:\mathbb R^n \rightarrow \mathcal{X}$ a \textit{response map} if, for any $p\in \mathbb R^n$, 
$x^*(p)$ is an INE of the induced game $\{c_i^\gamma(x)\}_i$ with the incentive $\gamma(x,p)$. The following result characterizes the response map $x^*(\cdot)$ under Assumption \ref{ass:types} and shows that its restriction to an open subset of $\mathbb R^n$ is a diffeomorphism onto $\operatorname{int} \mathcal X$.

\begin{lemma}
\label{lemma:NE_bijection_mapping}
    Suppose Assumption \ref{ass:types} holds. Then the response map $x^\ast(\cdot)$ is unique.
      Moreover, on the set $\mathcal{P}=-\Theta^* \Phi(\operatorname{int}{\mathcal{X}})\subset {\mathbb{R}}^n$,
    there exists a diffeomorphism 
    \[H: \mathcal{P} \to \operatorname{int} {\mathcal{X}},
    \]
    such that $H(p)=x^*(p)$ for all $ p\in\mathcal P$. In addition, for every $p\in\mathcal P$ and $w\in\mathbb R^n$,
    \begin{align}
        h_1\|w\|_2 \leq \|\nabla H(p) w\|_2 &\leq h_2\|w\|_2, \text{ and} \label{eq:H_singular_values_bounded} \\
        \Theta^{*} \Phi(H(p)) + p &=0, \label{eq:first_order_condition}
    \end{align}
     where constants $h_1 = (\max_{x\in {\mathcal{X}}}\! \|\nabla G_0(x)\|_2)^{-1}$, and $h_2 = 2\lambda_{\max}(R)/{m}$.
\end{lemma}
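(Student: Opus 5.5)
Under the affine incentive \eqref{eq:optimal_policy}, the induced pseudo-gradient is $G_\gamma(x)=G_0(x)+p=\Theta^*\Phi(x)+p$. Its Jacobian is $\nabla G_\gamma(x)=\Theta^*\nabla\Phi(x)=\nabla G_0(x)$, which does not depend on $p$. The plan has three steps: establish existence and uniqueness of the INE from diagonal strict convexity, identify the interior responses with solutions of $G_\gamma=0$, and obtain the diffeomorphism and its derivative bounds from the inverse function theorem applied to the map $F(x):=-\Theta^*\Phi(x)$.

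\textbf{Uniqueness.} Assumption~\ref{ass:types} gives $\operatorname{Sym}(R\nabla G_\gamma(x))\succeq \tfrac m2 \mathbb I$ on $\mathcal X$ for every $p$. Each $c_i^\gamma$ is convex in $x_i$, since the $(i,i)$ entry of $R\nabla G_\gamma$ is positive and $R$ is diagonal positive. Because $\mathcal X$ is compact and convex, Rosen's theorem~\cite{rosenExistenceUniquenessEquilibrium1965a} yields existence and uniqueness of the INE for each $p$. Hence $x^*(p)$ is well defined and unique. I will also record strong monotonicity, obtained by integrating along the segment $[x,y]\subset\mathcal X$:
\[
(x-y)^\top R\bigl(G_\gamma(x)-G_\gamma(y)\bigr)\ge \tfrac m2\|x-y\|_2^2 .
\]

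\textbf{Interior characterization and bijection.} An interior point $x\in\operatorname{int}\mathcal X$ is an INE if and only if $G_\gamma(x)=0$. Necessity follows from first-order conditions. Sufficiency holds because each agent's cost is convex in its own action, and it then uses uniqueness. So for $p\in\mathcal P$ there is some $x\in\operatorname{int}\mathcal X$ with $\Theta^*\Phi(x)+p=0$, and this $x$ is the unique INE. Define $H(p):=x^*(p)$; then \eqref{eq:first_order_condition} holds. The map $F$ sends $\operatorname{int}\mathcal X$ onto $\mathcal P$ by definition of $\mathcal P$. It is injective by the monotonicity inequality: $F(x)=F(y)$ forces $\tfrac m2\|x-y\|^2\le 0$. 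So $F$ is a bijection and $H=F^{-1}$. Since $\nabla F(x)=-\nabla G_0(x)$ and $R\nabla G_0$ has positive definite symmetric part, $\nabla F$ is nonsingular. The inverse function theorem then shows that $\mathcal P$ is open and $H$ is $C^\infty$ ($\Phi$ is polynomial), with $\nabla H(p)=-\nabla G_0(H(p))^{-1}$.

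\textbf{Singular value bounds.} This is the step I expect to need the most care, specifically in getting the constant $h_2$ with the stated factor. For the lower bound, write $w=\nabla G_0\,(\nabla G_0^{-1}w)$, so $\|w\|\le\|\nabla G_0\|_2\|\nabla G_0^{-1}w\|$. This gives $\|\nabla H w\|\ge h_1\|w\|$ after taking the maximum of $\|\nabla G_0\|_2$ over the compact $\mathcal X$. For the upper bound, set $v=\nabla G_0^{-1}w$. Then
\[
\tfrac m2\|v\|^2\le v^\top R\nabla G_0 v=v^\top Rw\le\lambda_{\max}(R)\|v\|\|w\|,
\]
so $\|v\|\le 2\lambda_{\max}(R)\|w\|/m=h_2\|w\|$. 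The remaining checks are that the Jacobian bounds are uniform over $\mathcal X$, by continuity and compactness, and that the equivalence between interior INE and zeros of $G_\gamma$ also covers boundary cases, so that no INE for $p\in\mathcal P$ lies on $\partial\mathcal X$; the latter follows from uniqueness.
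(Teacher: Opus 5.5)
Your proposal is correct and matches the paper's proof essentially step for step. Uniqueness comes from strong monotonicity of $RG_\gamma$ together with Rosen's theorem. The diffeomorphism $H=(-G_0)^{-1}$ on $\mathcal P$ comes from injectivity of $G_0$ plus a nonsingular Jacobian. The bound $h_2$ uses the same estimate $\tfrac m2\|v\|_2^2\le v^\top R\nabla G_0 v$, and $h_1$ uses the same operator-norm argument. If anything, your proof is slightly more complete: you show explicitly, via own-action convexity, that an interior zero of $G_\gamma$ is an INE and hence the unique one. The paper states only the converse direction before concluding $x^*(p)=H(p)$.
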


\begin{proof}
Presented in Appendix~\ref{app:proofs_auxiliary}.
\end{proof}

The set $\mathcal{P}=-\Theta^* \Phi(\operatorname{int}{\mathcal{X}})$ is bounded and simply connected (hence path connected), since $H: \mathcal{P} \to \operatorname{int} {\mathcal{X}}$ is a diffeomorphism and $\operatorname{int} {\mathcal{X}}$ 
is an open hyperrectangle in $\mathbb R^n$. However, $\mathcal{P}$ need not be convex. 

\begin{figure*}[b!]
\normalsize
\vspace*{4pt}
\setcounter{MYtempeqncnt}{\value{equation}}
\setcounter{equation}{13}
\hrulefill
\begin{subequations}
\label{eq:solution_estimator}
\begin{equation}
\label{eq:theta_solution_estimator}
\begin{aligned}
    \hat \Theta (t)  =
    \hat \Theta(t-1)  - \delta(t) (\hat p(t) + \hat\Theta(t-1)\xi(t)) \frac{\xi(t)^\top S_{t-1}}{1+ \delta(t)\xi(t)^\top S_{t-1}\xi(t) },
\end{aligned}
\end{equation}
\begin{equation}
\label{eq:Sigma_solution_estimator}
\begin{aligned}
        S_t = S_{t-1} - \delta(t)S_{t-1}\xi(t)
        \frac{\xi(t)^\top S_{t-1}}
      {1 + \delta(t)\,\xi(t)^\top S_{t-1}\xi(t)}.
\end{aligned}
\end{equation} 
\end{subequations}
\setcounter{equation}{\value{MYtempeqncnt}}
\stepcounter{equation}
\end{figure*}

\begin{remark}
 A related result to Lemma~\ref{lemma:NE_bijection_mapping}, in the setting where each $\ell_i$ depends only on $x_i$, was reported in \cite{vasileiou2026adaptiveincentivedesignregret}. Here, our extension relies on the strong monotonicity assumption (Assumption~\ref{ass:types}) on the game's pseudo-gradient.
\end{remark}

\subsection{Two Noise Cases}
The planner can learn the unknown costs $\ell_i$, equivalently, the parameter $\Theta^*$,  only from NE responses that vary smoothly with incentives. We refer to the set $\mathcal{P}$ in Lemma~\ref{lemma:NE_bijection_mapping} as the \textit{informative region}, and 
call any $p \in \mathcal{P}$ an \textit{informative incentive}. 
Every informative incentive $p \in \mathcal{P}$ elicits a unique response $x^*(p) \in \operatorname{int} \mathcal{X}$ satisfying
\begin{equation}\label{eq:observation_no_noise}
    \Theta^*\Phi(x^*(p)) + p=0, \quad \forall p\in \mathcal{P}. 
\end{equation}
\begin{remark}
    The informative region $\mathcal{P}$ depends on the agents' types $\Theta^*$, as evidenced by \eqref{eq:first_order_condition}, and is therefore unknown to the system planner.
\end{remark}

Without prior information of parameter $\Theta^*$, the planner can jointly design an incentive sequence $\{p(t)\}_{t\geq 1}$, apply the corresponding incentive $\gamma(x,p(t))$, and estimate $\Theta^*$ based on observed agent responses. Let the incentive–response trajectory available to the planner (or an estimator) up to time $t\in\mathbb{Z}_+$ be $\{(\hat p(\tau), \hat x(\tau))\}_{\tau=1}^t$.
We consider a noisy extension of the ideal best-response model in \eqref{eq:observation_no_noise}, as
\begin{equation}
\begin{aligned}
\label{eq:model}
p(t) + v(t) & = - \Theta^* \Phi(\hat x(t)) , \\
\hat p(t) & = p(t) + e(t),
\end{aligned}
\end{equation}
where $v(t)\in \mathbb R^n$ is an endogenous system noise, $e(t)\in \mathbb R^n$ is an exogenous measurement noise, $\hat x(t)$ represents the observed agents' response, and $p(t)$ and $\hat p(t)$ are the incentive issued by system planner and observed by an exogenous estimator, respectively. 
In this paper, we investigate two scenarios of the noise processes:
\begin{enumerate}
    \item[(i)] \textit{Measurement-noise case}: $v(t)=0$ for all $t\geq 1$, i.e., endogenous noise is absent.
    \item[(ii)] \textit{Endogenous-noise case}: $v(t)\neq 0$ for some $t\geq 1$.
\end{enumerate}
In particular, case (ii) is more general and captures the setting where response $\hat x(t)$ is correlated with the noise process. Parameter estimation in this case then corresponds to an Error-in-Variables (EIV) regression problem. We will investigate parameter estimation and incentive design algorithms for the measurement-noise case in Section~\ref{sec:measurement_noise}, and for the endogenous-noise case in Section~\ref{sec:endogenous_noise}, respectively.

\begin{remark}
The measurement-noise case of adaptive incentive design was first proposed in \cite{ratliffAdaptiveIncentiveDesign2021}, reflecting a setting where parameter (type) estimation is performed by a distinct entity, such as a statistical agency. Regret minimization in this case with uncoupled agents was investigated in~\cite{vasileiou2026adaptiveincentivedesignregret}.
\end{remark}

\subsection{No-Regret Adaptive Incentive Design}
In this paper, we quantify the performance of an incentive design algorithm in terms of the cumulative divergence from the social optimum. In this setting, 
incentive $p^* = - \Theta^* \Phi(x^\dagger)$ defines the affine law 
$\gamma(\cdot, p^*)$ that elicits the desired response $x^\dagger$ from the agents and simultaneously satisfies $u^\dagger = \gamma(x^\dagger, p^*)$.  Given an incentive sequence $\{p(\tau)\}_{\tau \geq 1}$, we define the planner's $t$-stage squared social-cost regret as
\begin{equation}
\label{eq:regret_definition}
\begin{aligned}
       \mathcal{R}_t & = \sum_{\tau = 1}^t\left(\Psi(\hat x(\tau), u(\tau)) - \Psi(x^\dagger, u^\dagger)\right)^2,
\end{aligned}
\end{equation}
where $\Psi(\cdot,\cdot)$ is the social cost function, and $u(\tau)  = \gamma(\hat{x}(\tau), p(\tau))$ is the implemented incentive.

The planner's objective is therefore twofold: to ensure accurate type estimation for the unknown parameter $\Theta^\star$ through sufficient exploration of the parameter space and regulate the collective behavior toward the social optimum with minimal regret. These objectives are formalized in the problem below.

\begin{problem}[No-Regret
Adaptive Incentive Design - RAID]
    Given a social cost $\Psi$ and a desired action profile $(x^\dagger, u^\dagger)\in \operatorname*{arg\,min}_{x\in \mathcal{X}, \, u\in \mathbb R^n} \Psi(x, u)$,
    a system planner with no prior knowledge of 
    $\Theta^* = (\theta_1^*, \ldots, \theta_n^*)^\top$
    aims to design an estimator 
    $\hat \Theta(t) =\mathcal{F}_t(\hat p(t), \hat x(t), \hat \Theta(t-1) ) $ 
    and an incentive law $p(t)= \mathcal{G}_t (\hat \Theta(t-1), x^\dagger, u^\dagger)$  
    satisfying:
    \begin{enumerate}
        \item \emph{Strong consistency} of $ \hat \Theta(t)$: For any initial estimate $\hat \Theta(0)$, $\lim_{t\to \infty} \hat \Theta(t)= \Theta^*$ almost surely (a.s.), and
        \item \emph{Sublinear regret}: The $t$-stage average regret $t^{-1}\mathcal{R}_t$ of policy  
        $\{p(\tau)\}_{\tau =1}^t$
        vanishes asymptotically a.s., that is,
        \(\mathcal{R}_t = o(t) \text{ a.s. }\)
    \end{enumerate}
\end{problem}

Solving the RAID problem inherently involves an exploration–exploitation trade-off. The planner seeks to exploit the current parameter estimates to induce desired agent responses and simultaneously, actively explore the parameter space to ensure statistical identifiability and consistency of $\Theta^*$. The next section develops type estimation and incentive design algorithms with this trade-off for the measurement-noise case.

\section{No-Regret Adaptive Incentive Design:\\The Measurement-Noise Case}
\label{sec:measurement_noise}

This section addresses the RAID problem when the endogenous process is absent, that is $v(t)=0$ for all $t\geq 1$. We first construct a recursive type estimator from informative incentive-response observations and then characterize its strong consistency in terms of the information accumulated along the observed trajectory.

\begin{assumption}[Measurement-noise case]
\label{ass:independet_noise}
Assume the process $\{v(t)\}_{t}$ is identically zero. 
Moreover, the process $\{e(t)\}_{t}$ is i.i.d., zero-mean and satisfies $\mathbb E (e(t)e(t)^\top) = \sigma_e^2 \mathbb I$ and $\sup_t\mathbb E\|e(t)\|_\infty^a < \infty$ for some $a > 2$, and $\sigma_e>0$.
\end{assumption}

\subsection{Type Estimation and Strong Consistency}
\label{sec:type_estimation}

Given the collection of observations $\{(\hat p(\tau), \hat x(\tau))\}_{\tau=1}^t$,
the natural least-squares estimator $\hat \Theta(t)$ of $\Theta^*$ is
\begin{equation}
\label{eq:theta_minimization_problem}
    \hat \Theta(t)  = \operatorname*{arg\,min}_{\Theta \in {\mathbb{R}}^{n\times D}} \sum_{\substack{\tau \leq t: \\ 
    p(\tau) \in \mathcal{P}}} 
    \big\|\Theta \Phi(\hat x(\tau)) + \hat p(\tau)\big\|^2_2.  
\end{equation}
The summation in~\eqref{eq:theta_minimization_problem} is restricted to informative time indices, since the response model~\eqref{eq:model} holds only when the issued incentive $p(t)$ lies in the informative region $\mathcal P$.
Although $\mathcal{P}$ depends on the unknown type matrix $\Theta^*$, Lemma \ref{lemma:NE_bijection_mapping} implies that the event $\{p(t) \in \mathcal{P}\}$ is equivalent to $\{ \hat x(t) = x^*(p(t))\in \operatorname{int} \mathcal{X}\}$. Thus informativeness can be verified directly from the observed response, and is not influenced by the measurement-noise process .
Minimization \eqref{eq:theta_minimization_problem} can be solved by the Recursive Least Squares (RLS) estimator \eqref{eq:solution_estimator}, 
where $\xi(t) = \Phi(\hat x(t))$, $\delta(t) = \mathbf{1} \{\hat x(t) \in \operatorname{int} \mathcal{X}\}$, and $\mathbf{1}\{\cdot\}$ is the indicator function. 
By the Sherman-Morrison formula, \eqref{eq:Sigma_solution_estimator} represents a rank-one update on the matrix $S_{t}^{-1}$, that is,
\begin{equation}
\label{eq:Sigma_inverse_rank_one_update}
    S_t^{-1} = S_{t-1}^{-1} + \delta(t)\xi(t)\xi(t)^\top. \tag{\ref{eq:Sigma_solution_estimator}'}
\end{equation}
The Gram matrix $S_{t}^{-1}$ is the information matrix associated with problem \eqref{eq:theta_minimization_problem}. We denote its  smallest and largest eigenvalues by $\lambda_{\min}(t)$ and $\lambda_{\max}(t)$, respectively.

\setcounter{equation}{14}

We next give a sufficient condition for strong consistency of \eqref{eq:solution_estimator} with respect to the growth of $\lambda_{\min}(t)$.

\begin{lemma}[Developed from~\cite{laiLeastSquaresEstimates1982}]
\label{thm:sufficient_strong_consistency}
    Suppose Assumptions \ref{ass:types} and \ref{ass:independet_noise} hold. 
    If $ \log t = o (\lambda_{\min}(t))$, then
    the estimator $\hat \Theta(t)$ in~\eqref{eq:solution_estimator} satisfies
    \[\|\hat \Theta(t) - \Theta^*\|^2_F = O\Big(\frac{ \log t}{\lambda_{\min}(t)}\Big) \text{ a.s.},\]
    where $\Theta^*$ is the true type matrix.
    Consequently, if $ \log t = o (\lambda_{\min}(t))$ a.s., then $ \hat \Theta(t) \rightarrow\Theta^*$ a.s.
\end{lemma}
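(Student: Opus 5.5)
We reduce the claim to the classical Lai–Wei analysis of least squares with martingale-difference noise. The first step is to write the estimation error in closed form. Unrolling \eqref{eq:solution_estimator}, or equivalently solving the normal equations of \eqref{eq:theta_minimization_problem} with the initialization $S_0$, gives
\[
\hat\Theta(t) = -\Big(\sum_{\tau\le t}\delta(\tau)\hat p(\tau)\xi(\tau)^\top + \hat\Theta(0)S_0^{-1}\Big) S_t .
\]
On informative times, $\delta(\tau)=1$. By Lemma~\ref{lemma:NE_bijection_mapping} and the model \eqref{eq:model} with $v\equiv 0$, the response satisfies $\hat x(\tau)=x^*(p(\tau))$ and $\hat p(\tau) = -\Theta^*\xi(\tau) + e(\tau)$. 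Substituting yields
\[
\hat\Theta(t)-\Theta^* = \big(\Theta^*-\hat\Theta(0)\big)\,\cdots\, - W_t S_t, \qquad W_t:=\sum_{\tau\le t}\delta(\tau)e(\tau)\xi(\tau)^\top,
\]
where the initialization term is $(\hat\Theta(0)-\Theta^*)S_0^{-1}S_t$. Since $\|S_t\|_2 = 1/\lambda_{\min}(t)$, this term is $O(1/\lambda_{\min}(t))$ and is negligible.

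The key structural point is measurability. The incentive $p(\tau)$ is chosen from $\hat\Theta(\tau-1)$ and possibly from independent randomization, so it is $\mathcal F_{\tau-1}$-measurable. Consequently $\hat x(\tau)=x^*(p(\tau))$, $\xi(\tau)$ and $\delta(\tau)$ are $\mathcal F_{\tau-1}$-measurable too. The measurement noise $e(\tau)$ enters only $\hat p(\tau)$, does not affect the response, and by Assumption~\ref{ass:independet_noise} is independent of $\mathcal F_{\tau-1}$. Therefore $\{e(\tau),\mathcal F_\tau\}$ is a martingale difference sequence with $\sup_\tau \mathbb E[\|e(\tau)\|^a\mid\mathcal F_{\tau-1}]<\infty$ for some $a>2$. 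The regressor $\delta(\tau)\xi(\tau)$ is predictable, and it is bounded because $\mathcal X$ is compact and $\Phi$ is polynomial.

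Next we apply the Lai–Wei bound row by row: for each agent $i$, the $i$-th row of $\hat\Theta(t)-\Theta^*$ is a scalar least-squares error with noise $e_i(\tau)$. By Lai and Wei (1982, Theorem 1), via the self-normalized martingale bound
\[
\|S_t^{1/2}W_t^\top\|^2 = O(\log\lambda_{\max}(t)) \quad \text{a.s.},
\]
we obtain
\[
\|\hat\theta_i(t)-\theta_i^*\|_2^2 = O\Big(\frac{\log\lambda_{\max}(t)}{\lambda_{\min}(t)}\Big) \quad \text{a.s.}
\]
Boundedness of $\xi$ gives $\lambda_{\max}(t)\le \lambda_{\max}(S_0^{-1}) + t\sup_{x\in\mathcal X}\|\Phi(x)\|_2^2 = O(t)$, hence $\log\lambda_{\max}(t)=O(\log t)$. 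Summing over the $n$ rows gives the Frobenius bound $\|\hat\Theta(t)-\Theta^*\|_F^2 = O(\log t/\lambda_{\min}(t))$ a.s. Consistency then follows immediately when $\log t=o(\lambda_{\min}(t))$.

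The main obstacle is checking the martingale and predictability structure. The indicator $\delta(\tau)$ depends on the observed response, so one must confirm it introduces no correlation with $e(\tau)$. This holds here because $\hat x(\tau)$ depends only on $p(\tau)$ and not on $e(\tau)$. If instead the incentive were also driven by the current-step measurement, the filtration would have to be chosen so that $p(\tau)$ precedes $e(\tau)$. A second, lesser issue is the convention that the first sum runs only over informative indices; this is handled by absorbing $\delta(\tau)$ into the regressor, which keeps Lai–Wei applicable since the regressor remains predictable.
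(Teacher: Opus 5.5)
Your proposal is correct and follows essentially the same route as the paper. Like the paper, you apply Lai--Wei's Theorem~1 row by row to $\hat p_i(\tau)=-\theta_i^{*\top}\xi(\tau)+e_i(\tau)$, use compactness of $\mathcal X$ to get $\lambda_{\max}(t)=O(t)$ so that $\log\lambda_{\max}(t)$ becomes $\log t$, and then sum over $i$. Your explicit checks of the predictability of $\delta(\tau)\xi(\tau)$ and of the initialization term are useful details that the paper leaves implicit. There is one sign slip in your unrolled display: it should read $\hat\Theta(t)=(\hat\Theta(0)S_0^{-1}-\sum_{\tau\le t}\delta(\tau)\hat p(\tau)\xi(\tau)^\top)S_t$. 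The initialization term $(\hat\Theta(0)-\Theta^*)S_0^{-1}S_t$ that you state afterwards is nonetheless correct, so the conclusion is unaffected.
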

\begin{proof}
Presented in Appendix \ref{app:proofs_main}.
\end{proof}

In Lemma~\ref{thm:sufficient_strong_consistency}, $\lambda_{\min}(t)$ quantifies the information accumulated by the estimator.
The growth condition $\log t = o (\lambda_{\min}(t))$ is a diminishing excitation requirement, which is substantially milder than the standard persistence-of-excitation conditions commonly imposed in adaptive ID literature. 

The next result establishes that i.i.d.\ Gaussian probing incentives provide sufficient excitation despite the nonlinear transfer map $\Phi\circ H(\cdot)$ and the unknown informative region. In particular, even after observations are filtered through $\delta(t)$, the resulting regressors yield linear growth of $\lambda_{\min}(t)$ and therefore satisfy the requirement of Lemma~\ref{thm:sufficient_strong_consistency}.

\begin{theorem}
\label{thm:information_growth}
    Suppose Assumptions \ref{ass:types} and \ref{ass:independet_noise} hold, and let $\{p(\tau)\}_{\tau \leq t} \subset {\mathbb{R}}^n$ be 
    an incentive sequence consisting of i.i.d.\ incentives 
    $p(t)\sim \mathcal{N}(0,\sigma^2_p\ \mathbb I)$.
    Then
    \(\lambda_{\min}(t) = \Theta(t) \text{ a.s.}\)
    
\end{theorem}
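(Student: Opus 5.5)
Since $S_t^{-1} = S_0^{-1} + \sum_{\tau\le t}\delta(\tau)\xi(\tau)\xi(\tau)^\top$, we have $\lambda_{\min}(t)\le\lambda_{\max}(t)\le \|S_0^{-1}\|_2 + t\max_{x\in\mathcal X}\|\Phi(x)\|_2^2$. Because $\mathcal X$ is compact and $\Phi$ is polynomial, this gives $\lambda_{\min}(t)=O(t)$ deterministically. The plan is therefore to prove the lower bound $\lambda_{\min}(t)=\Omega(t)$ a.s. The summands $Z_\tau := \delta(\tau)\xi(\tau)\xi(\tau)^\top$ are functions of $p(\tau)$ only. In the measurement-noise case, $\hat x(\tau)=x^*(p(\tau))$ is the unique response of Lemma~\ref{lemma:NE_bijection_mapping}, and by that lemma $\delta(\tau)=\mathbf 1\{p(\tau)\in\mathcal P\}$. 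Thus the $Z_\tau$ are i.i.d., bounded, positive semidefinite random matrices.

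By the strong law of large numbers applied entrywise, $t^{-1}\sum_{\tau\le t}Z_\tau\to M:=\mathbb E\big[\mathbf 1\{p\in\mathcal P\}\,\Phi(H(p))\Phi(H(p))^\top\big]$ a.s., with $p\sim\mathcal N(0,\sigma_p^2\mathbb I)$. Eigenvalues are continuous, so $\lambda_{\min}(t)/t\to\lambda_{\min}(M)$ a.s. It then remains to show $M\succ 0$.

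Suppose $w^\top M w=0$ for some $w\in\mathbb R^D$. Then $w^\top\Phi(H(p))=0$ for Lebesgue-a.e.\ $p\in\mathcal P$. Here we use that the Gaussian density is positive everywhere and that $\mathcal P$ is open and nonempty, being the image of the open set $\operatorname{int}\mathcal X$ under the diffeomorphism $H^{-1}(x)=-\Theta^*\Phi(x)$. By continuity the identity holds for all $p\in\mathcal P$. Since $H$ maps $\mathcal P$ onto $\operatorname{int}\mathcal X$, the polynomial $x\mapsto w^\top\Phi(x)$ vanishes on the nonempty open set $\operatorname{int}\mathcal X$. A polynomial vanishing on an open set is identically zero, and the monomials in $\Phi$ are linearly independent, so $w=0$. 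Hence $M\succ0$.

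The main subtlety is this positivity step. It requires that the informative region $\mathcal P$ has positive Gaussian measure, which rests on the diffeomorphism property from Lemma~\ref{lemma:NE_bijection_mapping} (open image with nonempty interior), and that the filtering by $\delta$ does not depend on the noise. The latter holds because informativeness is determined by $p(\tau)$ through $\hat x(\tau)\in\operatorname{int}\mathcal X$, and $e(\tau)$ does not enter $\hat x(\tau)$. We should also check that $\operatorname{int}\mathcal X$ is nonempty, which follows from Assumption~\ref{ass:feasibility} since $x^\dagger\in\operatorname{int}\mathcal X$. Combining the two bounds gives $\lambda_{\min}(t)=\Theta(t)$ a.s.
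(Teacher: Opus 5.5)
Your proof is correct. Its outer layer is the same as the paper's: the summands $\delta(\tau)\xi(\tau)\xi(\tau)^\top$ are i.i.d.\ bounded functions of $p(\tau)$, the strong law gives $t^{-1}S_t^{-1}\to M$, and continuity of $\lambda_{\min}$ (Weyl's inequality in the paper) gives $\lambda_{\min}(t)/t\to\lambda_{\min}(M)$.

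You differ in how you show $M\succ0$, which the paper delegates to Lemma~\ref{lemma:excitation}. The paper's route is:
\begin{itemize}
\item it localizes to a ball $B(p^*,\varepsilon)\subset\mathcal P$ around $p^*=-\Theta^*\Phi(x^\dagger)$, which uses Assumption~\ref{ass:feasibility};
\item it writes $H(p)=A_pp+H_p$ by the mean-value theorem, with $A_p$ uniformly well conditioned;
\item it uses Lemma~\ref{lemma:M_matrix} to rewrite $\Phi(H(p))=M_p^\top\Phi(p)$ with $M_p$ uniformly invertible;
\item it concludes by contradiction through Lemma~\ref{lemma:PD_monomial_moments}.
\end{itemize}
You instead pull a null direction $w$ straight back through the continuous surjection $H:\mathcal P\to\operatorname{int}\mathcal X$. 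You then use that a polynomial vanishing on a nonempty open set is identically zero.

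Your route is shorter. It also needs only $\operatorname{int}\mathcal X\neq\emptyset$ rather than $x^\dagger\in\operatorname{int}\mathcal X$. You are right to flag that some such nondegeneracy is needed beyond Assumptions~\ref{ass:types} and~\ref{ass:independet_noise}. The paper's proof of Lemma~\ref{lemma:excitation} also relies on one, via Assumption~\ref{ass:feasibility}, without listing it among the hypotheses.

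In fact, once the paper has $w^\top\Phi(H(p))=0$ on the ball, its closing appeal to Lemma~\ref{lemma:PD_monomial_moments} for $H(p)$ is exactly your observation applied to the open set $H(B(p^*,\varepsilon))$. So the change of basis does no essential work for this statement. Its role is to set up the template reused in Lemma~\ref{lemma:eiv_excitation}. There the regressor is the noise-averaged $\mu_p$, and a direct pullback through $H$ is not available.

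Your separate deterministic $O(t)$ upper bound (with $S_0^{-1}$ included) is harmless but redundant. The almost-sure limit $\lambda_{\min}(t)/t\to\lambda_{\min}(M)>0$ already gives both directions.
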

\begin{proof}
    Presented in 
    Appendix \ref{app:proofs_main}.
\end{proof}

An intermediate result, which is also of independent interest, is that i.i.d.\ Gaussian incentives are persistently exciting for the regression \eqref{eq:model}, 
despite individual probing incentives not necessarily belonging to region \(\mathcal P\).

\begin{lemma}
\label{lemma:excitation}
    Let Assumptions \ref{ass:types} and \ref{ass:independet_noise} hold, and let $\{p(\tau)\}_{\tau =1}^t$ be an ${\mathbb{R}}^n$-valued i.i.d. incentive sequence with $p(t)\sim \mathcal{N}(0,\sigma^2_p\ \mathbb I)$.
    Then there exists a constant $c>0$ such that 
    \[  \mathbb E\left( \xi(t)\xi(t)^\top \delta(t)\right) \succeq c\mathbb I,\]
    where $\xi(t)$ and  $\delta(t)$ are defined in \eqref{eq:solution_estimator}.
\end{lemma}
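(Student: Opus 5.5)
Because $\{p(t)\}$ is i.i.d. and $v\equiv 0$, we have $\hat x(t)=x^*(p(t))$. The pair $(\xi(t),\delta(t))$ is therefore a deterministic function of $p(t)$ alone, and the matrix $M:=\mathbb E(\xi(t)\xi(t)^\top\delta(t))$ does not depend on $t$. It suffices to show that $M$ is positive definite; then $c=\lambda_{\min}(M)>0$. I will argue by contradiction through the quadratic form. Suppose there is a unit vector $w\in\mathbb R^D$ with
\[
w^\top M w=\mathbb E\big[(w^\top\Phi(x^*(p)))^2\,\mathbf 1\{p\in\mathcal P\}\big]=0 .
\]
By Lemma~\ref{lemma:NE_bijection_mapping}, the event $\{\delta(t)=1\}$ coincides with $\{p(t)\in\mathcal P\}$, and on $\mathcal P$ the response is $x^*(p)=H(p)$. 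The condition above therefore says that $w^\top\Phi(H(p))=0$ for Lebesgue-almost every $p\in\mathcal P$. Here I use that the Gaussian density is strictly positive on $\mathbb R^n$, so it is mutually absolutely continuous with Lebesgue measure.

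Next I transfer this vanishing to the action space. The set $\mathcal P$ is nonempty and open, being the diffeomorphic image of $\operatorname{int}\mathcal X$ under $H^{-1}$, so it has positive Lebesgue measure. The map $p\mapsto w^\top\Phi(H(p))$ is continuous on $\mathcal P$. A continuous function vanishing almost everywhere on an open set vanishes everywhere on it, so $w^\top\Phi(H(p))=0$ for all $p\in\mathcal P$. Since $H:\mathcal P\to\operatorname{int}\mathcal X$ is a bijection, this gives $w^\top\Phi(x)=0$ for all $x\in\operatorname{int}\mathcal X$. The function $w^\top\Phi(x)$ is a polynomial in $n$ variables of degree at most $d$, with the entries of $w$ as its coefficients on the distinct monomials. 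The set $\operatorname{int}\mathcal X$ is a nonempty open hyperrectangle; it is nonempty because $x^\dagger\in\operatorname{int}\mathcal X$ by Assumption~\ref{ass:feasibility}. A polynomial vanishing on a nonempty open set is identically zero, for instance because all its partial derivatives at an interior point vanish and these recover the coefficients. Hence $w=0$, contradicting $\|w\|_2=1$. So $M\succ 0$.

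The main subtlety is the first step: justifying that $\delta(t)=1$ exactly on $\mathcal P$ and that $x^*$ agrees with $H$ there. This follows from the uniqueness part of Lemma~\ref{lemma:NE_bijection_mapping}. If $p\notin\mathcal P$, then $x^*(p)$ must lie on $\partial\mathcal X$: an interior equilibrium satisfies the first-order condition $\Theta^*\Phi(x)+p=0$, which would force $p\in\mathcal P$. I also need $M$ to be finite. This holds because $\xi(t)\delta(t)$ is bounded: $\Phi$ is continuous on the compact set $\mathcal X$. One could alternatively extract a quantitative constant $c$ from the singular-value bounds~\eqref{eq:H_singular_values_bounded} and the Gaussian density on the bounded set $\mathcal P$, but the qualitative positive-definiteness argument above already yields the claimed existence of $c>0$.
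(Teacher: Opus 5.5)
Your argument is correct, and it takes a shorter route than the paper. The paper first restricts to a ball $B(p^*,\varepsilon)\subset\mathcal P$. It then writes $H(p)=A_pp+H_p$ via the mean value theorem and shows that $A_p$ and $H_p$ lie in compact sets; the lower singular-value bound on $A_p$ needs a separate computation with $\operatorname{Sym}(R\nabla G_0(x)^{-1})$. Next it invokes Lemma~\ref{lemma:M_matrix} to write $\Phi(H(p))=M_p^\top\Phi(p)$ with $M_p$ uniformly nonsingular. Only then does it argue by contradiction using Lemma~\ref{lemma:PD_monomial_moments}.

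You skip this change-of-basis machinery entirely. You pull the almost-everywhere vanishing of $w^\top\Phi(H(p))$ back through the bijection $H$ to all of $\operatorname{int}\mathcal X$, and then use that a polynomial vanishing on an open set has zero coefficients. This is in fact what the paper's last step amounts to: applying Lemma~\ref{lemma:PD_monomial_moments} to $H(p)$ conditioned on $p\in B(p^*,\varepsilon)$ relies exactly on $H(B(p^*,\varepsilon))$ being open. So the $M_p$ representation is not needed for this lemma, and the paper's stated conclusion ``$M_pw=0$'' is most cleanly read as $w=0$, obtained your way.

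What the paper's construction buys is a template with constants uniform over compact parameter sets, which it reuses in Lemma~\ref{lemma:eiv_excitation}, where the regressor is averaged over the endogenous noise. Neither argument gives an explicit value of $c$.

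Both proofs also invoke Assumption~\ref{ass:feasibility}, which is not among the lemma's stated hypotheses. You need it only to guarantee $\operatorname{int}\mathcal X\neq\emptyset$, whereas the paper needs $p^*\in\mathcal P$.
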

\begin{proof}
    Presented in Appendix \ref{app:proofs_auxiliary}.
\end{proof}

The linear growth established in Theorem~\ref{thm:information_growth} is stronger than the excitation condition required in Lemma~\ref{thm:sufficient_strong_consistency}. This gap leaves room for the no-regret incentive design developed next, in which only intermittent probing is needed to maintain consistency of the type estimate.

\subsection{RAID Algorithm and Regret Guarantees}
\label{sec:algorithm}
Next, we leverage 
the estimator in Section \ref{sec:type_estimation} to design an incentive algorithm that simultaneously achieves both objectives of the no-Regret Adaptive Incentive Design (RAID) problem in the measurement-noise setting. The resulting mechanism is summarized in Algorithm \ref{alg:one} and alternates between probing (exploration) and estimate-based (exploitation) incentives.

\begin{algorithm}
\caption{no--Regret Adaptive Incentive Design (RAID) in the Measurement-Noise Case}\label{alg:one}
\begin{algorithmic}[1]
    \Statex \textbf{Parameters:}
    \Statex Parameters $\gamma \in [\frac{1}{2}, 1)$, $\sigma^2_p > 0$
    \Statex Desired profile $(x^\dagger, u^\dagger)$ with $x^\dagger\in \operatorname{int} \mathcal{X}$
    \Statex Iterations $t\geq 1$.
    \Statex \textbf{Output:}
    \Statex Estimate and incentive $\{ \hat\Theta(\tau), p(\tau)\}_{\tau=1}^t$
    \Statex
    \State Define function $A(\tau) = \tau^{\gamma}\log \tau$
    \State Initialize $S_1 \succ 0$, $p(1)$, and $\hat \Theta(1)$
    \NoIndentFor{$\tau = 2, \ldots, t$}
    \State $\textrm{Explore}_\tau \gets \big( \operatorname{tr}(S_{\tau-1}) > A(\tau-1)^{-1}\big)$
    \If{$\textrm{Explore}_\tau$} \Comment{Exploration phase}
    \State Draw incentive parameter $p(\tau) \sim \mathcal N(0,\sigma^2_p\ \mathbb I)$
    \State Issue incentive $\gamma(\cdot, p(\tau))$
    \State Observe agent response $\hat x(\tau) = x^*(p(\tau))$
    \State Update $S_{\tau}$ and $\hat \Theta(\tau)$, according to \eqref{eq:solution_estimator}
    \Else \Comment{Exploitation phase}
        \State Update $p(\tau) \gets  -\hat\Theta(\tau-1)  \Phi(x^\dagger)$
        \State Issue incentive $\gamma(\cdot,p(\tau))$
        \State Maintain $S_{\tau} \gets S_{\tau-1}$ and $\hat \Theta(\tau) \gets \hat \Theta(\tau-1)$
    \EndIf
    \EndNoIndentFor
\end{algorithmic}
\end{algorithm}

As illustrated by Lemma~\ref{lem:linear_inccentive_is_optimal}, the planner's optimal incentive is $\gamma^*(\cdot, p^*)$ with parameter $p^* = - \Theta^* \Phi(x^\dagger)$.
Without prior knowledge of agent types, the system planner naturally substitutes the current estimate $\hat \Theta(t)$ for the true $\Theta^*$, which defines the policy $p(\tau) = -\hat\Theta(\tau-1)  \Phi(x^\dagger)$.

By Lemma~\ref{thm:sufficient_strong_consistency}, strong consistency of the estimator $\hat \Theta(t)$ requires the condition $ \log t = o(\lambda_{\min}(t))$ a.s.
To this end, Algorithm~\ref{alg:one} uses a threshold-based switching rule to alternate between exploration and exploitation phases. In the remainder of the section, we consider $t$ large enough to allow asymptotic analysis. Let
\begin{equation}
\label{eq:control}
    p(t) = \begin{cases}
        -\hat \Theta(t-1) \Phi(x^\dagger), & t\in[\tau_k, \sigma_k) \\
        \varepsilon(t), & t\in[\sigma_k, \tau_{k+1})
    \end{cases}
\end{equation}
where the probing sequence $\{\varepsilon(t)\}_{t}$ is i.i.d. with each $\varepsilon(t) \sim \mathcal{N}(0, \sigma^2_p\, \mathbb I)$, and the switching schedule $\{\tau_k\}_{k}$ and $\{\sigma_k\}_{k}$ is designed iteratively, according to the threshold
$\{A(t)\}_{t}$. Specifically, let $\tau_1 = 0$, and
\begin{equation}
\label{eq:switches}
\begin{aligned}
    \sigma_k & = \inf\{ t\geq \tau_k : \operatorname{tr}(S_{t}) > A(t)^{-1}\}, \\
    \tau_{k+1} & = \inf\{ t\geq \sigma_k : \operatorname{tr}(S_{t}) \leq A(t)^{-1}\}, \\
\end{aligned}
\end{equation}
for $k\geq 1$, where $A(t) = t^\gamma \log(t)$, as in Algorithm \ref{alg:one}.
 
In the proposed algorithm, the \emph{exploitation phase} occurs at iterations $t \in [\tau_k, \sigma_k)$, $k\geq 1$, and utilizes $\hat \Theta(t)$ to regulate agents' behavior 
to the target $x^\dagger$. During exploitation phases, the planner does not update the type estimate. On the other hand, \emph{exploration phases} occur when the information matrix $S_{t}^{-1}$ needs additional excitation. 
Notice that $1/\lambda_{\min}(S_{t}^{-1}) \leq \operatorname{tr}(S_{t}) \leq D/ \lambda_{\min}(S_{t}^{-1})$, and therefore, designing the switching condition according to $\operatorname{tr}(S_{t})$ yields a more computationally tractable switching signal. In particular, $\lambda_{\min}(t) \leq A(t)$ implies that $\operatorname{tr}(S_{t}) \geq A(t)^{-1}$. This condition is used in \eqref{eq:switches} to identify iterations at which excitation is insufficient.
At such times, probing incentives are introduced and \eqref{eq:solution_estimator} is used to update $\hat \Theta(t)$.

Applying Algorithm \ref{alg:one} with incentive policy \eqref{eq:control} and switching schedule \eqref{eq:switches}, the planner obtains
an almost-sure convergence rate of $\hat \Theta(t)$ to $\Theta^*$ and guarantees that the $t$-stage average regret $t^{-1}\mathcal{R}_t$ vanishes almost surely. This constitutes the first main result of this work, and is presented below.

\begin{theorem}
\label{thm:algorithm_regret}
    Let Assumptions \ref{ass:feasibility}, \ref{ass:types} and \ref{ass:independet_noise} hold. For all $\gamma \in [\frac{1}{2},1)$, the type estimates $\{\hat \Theta(\tau)\}_{\tau =1}^t$ produced by Algorithm \ref{alg:one} satisfy
    \[\| \hat \Theta(t) - \Theta^*\|_F^2 = O(t^{-\gamma}) \text{ a.s.}\]
    Moreover, the system planner's regret $\mathcal{R}_t$, as defined in \eqref{eq:regret_definition}, satisfies
    \[ \mathcal{R}_t = O (t^\gamma \log t)
    \text{ a.s.}\]
\end{theorem}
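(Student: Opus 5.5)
The proof splits into three steps: a sandwich bound on the information matrix, the estimation rate via Lemma~\ref{thm:sufficient_strong_consistency}, and a regret decomposition into exploitation and exploration contributions.

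\textbf{Step 1: sandwich $\lambda_{\min}(t)$ between multiples of $A(t)$.} By the switching rule \eqref{eq:switches}, whenever $\operatorname{tr}(S_t)>A(t)^{-1}$ the algorithm probes with i.i.d.\ Gaussian incentives. I will show that exploration cannot stall. Restricted to the exploration times, the probes form an i.i.d.\ Gaussian subsequence. Lemma~\ref{lemma:excitation} together with a martingale strong law, as used for Theorem~\ref{thm:information_growth}, gives that $N$ probes raise $\lambda_{\min}$ by $cN(1+o(1))$ a.s.

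Since $A(t)$ grows sublinearly, $\operatorname{tr}(S_t)\le A(t)^{-1}$ must hold infinitely often, so exploitation phases are nonempty. Moreover, each exploration phase ends after finitely many steps, because the required increase in $\lambda_{\min}$ is bounded. At exit we have $1/\lambda_{\min}\le \operatorname{tr}(S_t)\le A(t)^{-1}$, and during exploitation $S_t$ is frozen while $A(t)$ increases. Hence for all large $t$,
\[
A(t)/D\lesssim \lambda_{\min}(t) \quad\text{and}\quad \lambda_{\min}(t)\le A(t)+O(1).
\]
The lower bound uses $\operatorname{tr}(S)\le D/\lambda_{\min}$ at the exploration exit times, combined with the fact that $A(t)$ changes slowly ($A(t+1)-A(t)=o(1)$).

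\textbf{Step 2: estimation rate.} The number of exploration steps $N(t)$ satisfies $N(t)\asymp \lambda_{\min}(t)\asymp A(t)$. Lemma~\ref{thm:sufficient_strong_consistency} then gives $\|\hat\Theta(t)-\Theta^*\|_F^2=O(\log t/\lambda_{\min}(t))$. A direct substitution of $\lambda_{\min}\gtrsim t^\gamma\log t$ yields $O(t^{-\gamma})$. Because $\hat\Theta$ is frozen during exploitation, the bound at the last update time transfers to $t$, using $A$ at that time, which is comparable up to $o(1)$ factors.

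\textbf{Step 3: regret decomposition.} During exploitation, $p(t)=-\hat\Theta(t-1)\Phi(x^\dagger)$, so $\|p(t)-p^*\|\le \|\Phi(x^\dagger)\|\,\|\hat\Theta-\Theta^*\|_F$. For large $t$, $p(t)$ lies in the open set $\mathcal P$ near $p^*$, so $\hat x(t)=H(p(t))$. By \eqref{eq:H_singular_values_bounded}, applied along a segment inside $\mathcal P$ (which holds locally since $p^*\in\mathcal P$ is interior), $\|\hat x(t)-x^\dagger\|\le h_2\|p(t)-p^*\|$. The implemented $u(t)=\gamma(\hat x(t),p(t))$ satisfies $|u_i-u_i^\dagger|=|p_i(\hat x_i-x_i^\dagger)|=O(\|p-p^*\|)$. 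By Lipschitzness of $\Psi$ (Assumption~\ref{ass:feasibility}), each exploitation term of \eqref{eq:regret_definition} is $O(\|\hat\Theta(t)-\Theta^*\|_F^2)=O(t^{-\gamma})$. Summing gives $O(t^{1-\gamma})\le O(t^\gamma)$ since $\gamma\ge \tfrac12$.

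During exploration, each term is $O(1+\|\varepsilon(t)\|^4)$, since $\hat x$ lies in the compact set $\mathcal X$ and $u$ is linear in $p$. The Gaussian moments and the strong law over the exploration times give a total of $O(N(t))=O(t^\gamma\log t)$ a.s. Combining the two contributions yields $\mathcal R_t=O(t^\gamma\log t)$.

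\textbf{Main obstacle.} The hardest part is Step 1. It requires controlling $\lambda_{\min}$ along a random, adaptively selected subsequence of probes, and showing both that exploration is not excessive (the upper bound on $N(t)$) and that it is sufficient. I would handle this by indexing the probes by their own counter, which preserves the i.i.d.\ structure since the switching decisions are predictable. I would then apply the almost-sure linear growth from Theorem~\ref{thm:information_growth} to this reindexed sequence.
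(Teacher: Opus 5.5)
Your proposal follows essentially the same route as the paper. The paper likewise proves $\#(t)=\Theta(A(t))$ and $\lambda_{\min}(t)=\Theta(A(t))$ (Lemma~\ref{lemma:number_of_exploration_samples}) by re-indexing the probes as an uninterrupted i.i.d.\ Gaussian sequence, using the linear information growth of Theorem~\ref{thm:information_growth}, and combining the switching thresholds with the fact that $t-A(t)$ is eventually increasing; it then applies Lemma~\ref{thm:sufficient_strong_consistency} and splits the regret into an $O(t^{1-\gamma})$ exploitation part (via the $h_2$ bound on $\nabla H$ near $p^*$) and an $O(\#(t))$ exploration part (via the strong law).

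There is one slip in Step~1: you use the two trace inequalities the wrong way round. The relation $1/\lambda_{\min}(t)\le\operatorname{tr}(S_t)\le A(t)^{-1}$ gives $\lambda_{\min}(t)\ge A(t)$ throughout exploitation. By contrast, $A(t)^{-1}<\operatorname{tr}(S_t)\le D/\lambda_{\min}(t)$ gives only $\lambda_{\min}(t)<DA(t)$ during exploration. Your upper bound should therefore read $\lambda_{\min}(t)\le DA(t)+O(1)$, not $A(t)+O(1)$; the latter can fail when the spectrum of $S_t^{-1}$ is nearly flat. The corrected bound leaves the $\Theta(A(t))$ conclusion and everything downstream unchanged.
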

\begin{proof}
    Presented in Appendix \ref{app:proofs_main}.
\end{proof}

The regret bound of Theorem~\ref{thm:algorithm_regret} is sublinear for all $\gamma \in [1/2,\, 1)$, hence the policy is asymptotically optimal. Among admissible tuning parameter choices, $\gamma=1/2$ yields the fastest decay of $t^{-1} \mathcal{R}_t$ and thus represents the planner’s preferred parameter, at the cost of a slower estimation error decay. 

An intermediate lemma, related to Theorem \ref{thm:algorithm_regret}, is the fact that Algorithm \ref{alg:one} enforces the $\log t = o(\lambda_{\min}(t))$ growth of the information matrix, as is required by Lemma~\ref{thm:sufficient_strong_consistency} for strong consistency of the estimator.
Let $\#(t)$ denote the total number of probing iterations completed by Algorithm \ref{alg:one} up to iteration $t$. The lemma establishes that both probing iterations and the spectrum of the information matrix grow in the same order as the threshold sequence $A(t)$ under Algorithm \ref{alg:one}.

\begin{lemma}
\label{lemma:number_of_exploration_samples}
Let Assumptions \ref{ass:types} and \ref{ass:independet_noise} hold. Under Algorithm \ref{alg:one}, it holds that 
\[\#(t) = \Theta(A(t)),\text{ and }\lambda_{\min}(t) = \Theta(A(t)),\text{ a.s.} \]
\end{lemma}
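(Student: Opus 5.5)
The argument couples the switching rule with a strong law of large numbers along the probing subsequence. I would prove the two rate statements together, showing a two-sided sandwich: lower and upper bounds for $\lambda_{\min}(t)$ in terms of $A(t)$, and matching bounds for $\#(t)$.

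\emph{Step 1 (information only grows during probing).} By \eqref{eq:Sigma_inverse_rank_one_update}, $S_t^{-1}$ changes only at exploration times. Let $t_1<t_2<\dots$ be the probing times, and write $S_t^{-1}=S_1^{-1}+\sum_{j\le \#(t)}\delta(t_j)\xi(t_j)\xi(t_j)^\top$. Since the probing incentives $\varepsilon(t_j)$ are i.i.d.\ $\mathcal N(0,\sigma_p^2\mathbb I)$ and drawn independently of the past, the summands form an i.i.d.\ sequence. This holds despite the random selection of probing times, because each decision $\{t\text{ is a probing time}\}$ is $\mathcal F_{t-1}$-measurable, so the draws are i.i.d.\ by optional skipping. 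The summands are also bounded, since $\Phi$ is continuous on compact $\mathcal X$ and $\hat x(t)=x^*(p(t))\in\mathcal X$. The strong law of large numbers then gives $N^{-1}\sum_{j\le N}\delta\xi\xi^\top\to M:=\mathbb E(\xi\xi^\top\delta)$ a.s., and Lemma~\ref{lemma:excitation} gives $M\succeq c\mathbb I$. This is exactly the argument behind Theorem~\ref{thm:information_growth}, applied along the subsequence. Consequently, whenever $\#(t)\to\infty$, we have $\lambda_{\min}(t)=\Theta(\#(t))$ and $\lambda_{\max}(t)=\Theta(\#(t))$ a.s. It follows that $\operatorname{tr}(S_t)=\Theta(1/\#(t))$, using $1/\lambda_{\min}\le\operatorname{tr}(S_t)\le D/\lambda_{\min}$.

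\emph{Step 2 (infinitely many probes).} Exploitation stops once $\operatorname{tr}(S_t)\le A(t)^{-1}$. If $\#(t)$ stayed bounded, $\operatorname{tr}(S_t)$ would be eventually constant and positive while $A(t)^{-1}\to0$. The condition $\operatorname{tr}(S_t)>A(t)^{-1}$ would then hold forever, forcing probing and contradicting boundedness. Hence $\#(t)\to\infty$, and Step 1 applies.

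\emph{Step 3 (lower bound).} At every exploitation time $t$ we have $\operatorname{tr}(S_{t-1})\le A(t-1)^{-1}$, which gives $\lambda_{\min}(t)\ge A(t-1)$. At every exploration time the previous step had $\operatorname{tr}(S_{t-1})>A(t-1)^{-1}$. Fix a large $t$ and let $s\le t$ be the last exploitation time. Then $\lambda_{\min}(t)\ge\lambda_{\min}(s)\ge A(s-1)$. Since all times in $(s,t]$ are probes, $\#(t)\ge\#(s)+(t-s)$. Combining $\lambda_{\min}=\Theta(\#)$ with $\#(s)\ge C A(s)$ (from $\lambda_{\min}(s)\ge A(s-1)$) gives $\#(t)\gtrsim A(s)+(t-s)\gtrsim A(t)$. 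The last inequality uses that $A$ is sublinear with $A(t)-A(s)\le A'(s)(t-s)\le t-s$ for large $s$. If no exploitation ever occurs after some point, then $\#(t)\sim t\ge A(t)$ trivially. So $\#(t)=\Omega(A(t))$ and therefore $\lambda_{\min}(t)=\Omega(A(t))$.

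\emph{Step 4 (upper bound, the main obstacle).} Here I need that probing cannot overshoot. At a probing time $t$, $\operatorname{tr}(S_{t-1})>A(t-1)^{-1}$, so $\#(t-1)\le C'A(t-1)$ by Step 1. The constant $C'$ comes from $\operatorname{tr}(S)\le D/\lambda_{\min}$ and $\lambda_{\min}\ge(c/2)\#$ eventually. Hence $\#(t)\le C'A(t)+1$ at every probing time. Between probing times $\#$ is constant while $A$ increases, so $\#(t)=O(A(t))$ for all $t$. Then $\lambda_{\min}(t)\le\lambda_{\max}(t)=O(\#(t))=O(A(t))$.

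The delicate point is making the almost-sure SLLN constants uniform along the random subsequence. I would handle this by working on the probability-one event where the SLLN for the i.i.d.\ probe sequence $\{\delta(t_j)\xi(t_j)\xi(t_j)^\top\}_j$ holds, so the constants are path-wise and deterministic after a random index.
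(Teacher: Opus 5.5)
Your proposal is correct and follows essentially the same route as the paper. Both arguments use the strong law along the i.i.d.\ probing subsequence to get $\lambda_{\min}(t)=\Theta(\#(t))$, then sandwich both quantities between multiples of $A(t)$ using the trace-based switching rule (via $1/\lambda_{\min}\le\operatorname{tr}(S_t)\le D/\lambda_{\min}$), a one-step increment bound, and the sublinearity of $A$. The differences are cosmetic: you bound $\#(t)$ first and transfer to $\lambda_{\min}(t)$, the paper does the reverse, and you invoke optional skipping where the paper builds an auxiliary sequence $\mu(k)$. Also, the first sentence of your Step 2 has exploitation and exploration swapped, though the argument that follows is right.
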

\begin{proof}
    Presented in Appendix \ref{app:proofs_main}.
\end{proof}

\section{No-Regret Adaptive Incentive Design:\\The Endogenous-Noise Case}
\label{sec:endogenous_noise}

In this section, we investigate the more general endogenous-noise case when the noise $v(t)$ is nonzero in \eqref{eq:model}. This leads to an error-in-variables (EIV) regression problem and a corresponding adaptive incentive design algorithm. Observe that the least-squares estimator~\eqref{eq:solution_estimator} derived in Section~\ref{sec:measurement_noise} can be written as 
\begin{equation*}
\label{eq:ls_estimator}
\hat{\Theta}(t)   = \Theta^{*}\! -\!\Big(\!\sum_{\tau=1}^t\! \delta(\tau) e(\tau) \xi(\tau)^\top\Big)\!\Big( \!\sum_{\tau=1}^t \!\delta(\tau) \xi(\tau) \xi(\tau)^\top\Big)^{-1}\!\!,
\end{equation*}
so strong consistency requires $\mathbb E ( \delta(t)e(t) \Phi(\hat x(t))^\top) = 0$. Under Assumption~\ref{ass:independet_noise}, the measurement noise $e(t)$ is independent of the agents' response $\hat x(t)$, and the condition is satisfied. However, the independence of agent responses and noise processes fails in the endogenous-noise case.

The endogenous noise $v(t)$ can be viewed as model uncertainty. Two representative sources of such uncertainty are $(i)$ misspecification of agents’ cost functions, which induces a discrepancy between the parametrized and actual best-response map; and $(ii)$ time variation in the dependence of $\ell_i$ on $x_i$, for example through a time-varying slope in the term $\partial \ell_i(x)/\partial x_i$. In both cases, the resulting deviation is endogenous to the agent response and cannot be treated as independent measurement noise. We refer to these effects collectively as model uncertainty.

\subsection{Error-in-Variables Analysis}
\label{sec:EIV}
With endogenous noise $\{v(t)\}_t$, agents respond to the effective incentive $ p(t)+v(t)$. Whenever $p(t)+v(t)\in\mathcal P$, the agents' observed response $\hat{x}(t)\in \operatorname{int} {\mathcal{X}}$ and satisfies
\(
\hat{x}(t) = H\bigl(p(t) + v(t)\bigr),
\)
where $H$ is the diffeomorphism in 
Lemma~\ref{lemma:NE_bijection_mapping}. 
Because $\hat x(t)$ depends on $v(t)$, the regressor 
$\xi(t) = \Phi(\hat{x}(t))$ is generally correlated with the noise, resulting in an error-in-variables bias, proportional to the correlation $\mathbb E (v(t) \Phi(\hat x(t))^\top) \neq 0$.
Consequently, the least-squares estimator becomes asymptotically biased and fails to recover $\Theta^*$.

To address the error-in-variables bias, we adopt a repeated-sampling estimator, 
motivated by instrumental variable methods~\cite{hausmanIdentificationEstimationPolynomial1991, liNonparametricEstimationMeasurement1998}. Specifically, at (adaptively) selected times $t$, the system planner fixes $p(t)$ and issues the associated incentive $\gamma(\cdot, p(t))$ three times, obtaining three noisy response observations. We denote the resulting observations by the pairs $(\hat p^{k}(t), \hat x^{k}(t))$, $k\in [3]$. When $\hat x^{k}(t) \in {\mathcal{X}}$, the observation pair satisfies
\begin{equation}
\label{eq:model_eiv_2}
\begin{aligned}
\hat p^{k}(t) & = - \Theta^* \Phi( \hat x^{k}(t)) + e^{k}(t) - v^{k}(t) \\
& = - \Theta^* \xi^{k}(t) + e^{k}(t) - v^{k}(t),
\end{aligned}
\end{equation}
where $\xi^{k}(t) = \Phi(\hat x^{k}(t))$.

To formalize the endogenous-noise setting we substitute Assumption \ref{ass:independet_noise} with the following. 
\begin{assumption}[Endogenous-Noise Case]
\label{ass:eiv_noise}
The noise processes $\{e^k(t)\}_{t}$ and $\{v^k(t)\}_{t}$ in~\eqref{eq:model_eiv_2} are independent across both $k$ and $t$. Moreover, there exist constants $a > 2$ and $\sigma_e, \bar v>0$ such that for each $k\in [3]$, the processes are i.i.d., zero-mean and satisfy 
\[\begin{aligned}
    & \mathbb E(e^k(t)e^k(t)^\top) = \sigma_e^2 \mathbb I, \ \sup_t\mathbb E\|e^k(t)\|_\infty^a < \infty, \\
    & \sup_t\|v^{k}(t)\|_\infty \leq \bar v \text{ a.s.}
\end{aligned}\]
\end{assumption}
Here we require the endogenous noise $v^{k}(t)$ to be finitely supported, and further require ${\mathcal{X}}$ to be large enough. Define the distance-to-boundary and diameter functions
\[d_{{\mathcal{X}}}(x) = \min_{y \in \partial {\mathcal{X}}}\|x-y\|_2, \text{ and } |{\mathcal{X}}|= \max_{x,y\in {\mathcal{X}}}\|x-y\|_2.\]
\begin{assumption}[$\eta$-exploration margin]
\label{ass:exploration_margin}
The feasible response set ${\mathcal{X}}$ is large in the sense that, there exists an $\eta > {4\sqrt{n}\lambda_{\max}(R) \bar v}/{m}$ such that $|{\mathcal{X}}| > 2 \eta$.
\end{assumption}

We also introduce a modified regret $\mathcal{J}_t$ to quantify the performance of a given policy  $\{p(\tau)\}_{\tau=1}^t$. Define
\begin{equation}
\label{eq:eiv_regret}
\begin{aligned}
    &\mathcal{J}_t  = \sum_{\tau=1}^t \big|\Psi\big(\hat x^{1}(\tau), u^{1}(\tau)\big) - \Psi\big(x_{r}^{1}(\tau), u_{r}^{1}(\tau)\big)\big|^2, \\
    & u^{1}(\tau) = \gamma\big(\hat x^{1}(\tau), p(\tau)\big),
\end{aligned}
\end{equation}
where $x_r^{k}(t)$, $u_r^{k}(t)$, denote the noise-adjusted reference sequences $x_{r}^{k} (t) = 
x^*(p^* + v^{k}(t))$ and $u_r^{k}(t) = \gamma(x_{r}^{k} (t), p^*)$, and $p^* = -\Theta^* \Phi(x^\dagger)$.
First, observe that the planner accumulates regret only for one of the independent trials $(\hat p^{k}(t), \hat x^{k}(t))$, $k\in[3]$. This simplifies the exposition between those iterations that the algorithm issues one or three trials of $\gamma(\cdot, p(t))$. It introduces at most a constant multiplicative factor in the regret, so asymptotic results hold without modification. Second, by using reference $x_{r}^{k}(t)$, we  isolate the incremental error introduced by the planner’s decision
to use incentive $p(t)$ instead of the optimal incentive $p^*$. Clearly, the regret $\mathcal J_t$ of policy $\{p(\tau)\}_{\tau=1}^t$ with $p(\tau)=p^*$ at every $\tau$ is zero for any realization of $\{v^{k}(\tau)\}_{\tau=1}^t$, so $x_{r}^{k}(t)$ is the optimal performance obtainable by an oracle in the presence of endogenous-noise.

With these modifications, the RAID problem remains to design an incentive policy that ensures strong consistency of the type estimator under Assumptions~\ref{ass:eiv_noise} and~\ref{ass:exploration_margin}, while guaranteeing a vanishing regret $t^{-1} \mathcal{J}_t$ a.s. 

\subsection{A Repeated-Sampling Estimator and Algorithm}
\label{sec:EIV_estimator}
We construct a modified least-squares estimator that resolves the EIV bias identified in Section~\ref{sec:EIV} and extends the RAID Algorithm to the endogenous-noise setting.
The proposed estimator leverages the conditional independence of $\hat x^{k}(t)$ given the incentive $p(t)$. This eliminates the correlation term without requiring knowledge of $\sigma_e^2$ and $\bar v$ in Assumption~\ref{ass:eiv_noise}. 
The algorithm is presented in Algorithm~\ref{alg:two} and preserves the almost-sure parameter convergence and regret rates as in the measurement-noise case, 
demonstrating that the endogenous-noise extension incurs no asymptotic cost.

Given the observations $\{(\hat p^{k}(\tau), \hat x^{k}(\tau))\}_{\tau=1}^t$, $k\in [3]$, the planner constructs an estimator $\tilde \Theta(t)$ for $\Theta^*$ as 
\begin{subequations}
\label{eq:eiv_estimator}
\begin{align}
\tilde \Theta(t) & = - X_tU_t^+, \\
X_t & =  \sum_{\tau=1}^t \delta(\tau) \hat p^{1}(\tau) \xi^{2}(\tau)^\top, \\
U_t & = \sum_{\tau=1}^t\! \delta(\tau) \xi^{1}(\tau) \xi^{2}(\tau)^{\top},
\end{align}
\end{subequations}
where $\delta(t) =  \mathbf{1} \{ d_{{\mathcal{X}}}(\hat x^{{3}}(t)) \geq \eta \}$, $\eta$ is defined in Assumption~\ref{ass:exploration_margin}, $\xi^{k}(t) = \Phi(\hat x^{k}(t))$,  
and $A^+$ denotes the pseudoinverse. Note that~\eqref{eq:eiv_estimator} only requires the three response observations $\{x^{k}(t)\}_{k\in [3]}$ and one incentive parameter observation $\hat p^{1}(t)$. 
We observe that, when $\delta(t) =1$, both $\hat x^{2}(t), \hat x^{1}(t) \in {\mathcal{X}}$, since
\[\begin{aligned}
    d_{\mathcal{X}}(\hat x^{k}(t)) & \geq d_{\mathcal{X}}(\hat x^{3}(t)) - \|\hat x^{k}(t) - \hat x^{3}(t)\|_2 \\
    & \geq \eta- \|H(p(t)+v^{k}(t))- H(p(t)+v^{3}(t))\|_2\\
    & \ge \eta- 2 h_2 \sqrt{n}\bar v > 0.   
\end{aligned}\]
The following theorem characterizes the strong consistency of estimator $\tilde{\Theta} (t)$ with respect to an information matrix.
\begin{theorem}
\label{thm:eiv_sufficient_strong_consistency}
Suppose Assumptions \ref{ass:types}, \ref{ass:eiv_noise}, and~\ref{ass:exploration_margin} hold. Let
\begin{align}
    B_t & = \sum_{\tau=1}^t \delta(\tau) \xi^{2}(\tau)\xi^{{2}}(\tau)^\top, \label{eq:B_t}
\end{align}
where $\xi^{k}(t)$ and $\delta(t)$ are defined in \eqref{eq:eiv_estimator}. If $\log t = o(\lambda_{\min}(B_t))$, then the estimator $\Tilde{\Theta}(t)$ in \eqref{eq:eiv_estimator} satisfies
\[\|\tilde \Theta(t) - \Theta^*\|_F^2 = O \left( \frac{\log t}{\lambda_{\min}^+(B_t^{-\frac{1}{2}} U_t^\top U_t B_t^{-\frac{1}{2}})}\right) \text{ a.s.,}\]
where $\lambda_{\min}^+(\cdot)$ denotes the smallest nonzero eigenvalue of a matrix. Consequently, if $\log t = o (\lambda_{\min}(B_t^{-\frac{1}{2}} U_t^\top U_t B_t^{-\frac{1}{2}})) $ a.s., then $\tilde \Theta(t) \to \Theta^*$ a.s.
\end{theorem}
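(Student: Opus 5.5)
Substituting the observation model \eqref{eq:model_eiv_2} into $X_t$ gives the error decomposition on which the whole argument rests. For every $\tau$ with $\delta(\tau)=1$ we have $\hat x^{1}(\tau)\in\operatorname{int}\mathcal X$, by the margin argument preceding the theorem. Hence $\hat p^{1}(\tau)=-\Theta^*\xi^{1}(\tau)+w(\tau)$ with $w(\tau):=e^{1}(\tau)-v^{1}(\tau)$, and
\[
X_t=-\Theta^*U_t+W_t,\qquad W_t:=\sum_{\tau=1}^t\delta(\tau)w(\tau)\xi^{2}(\tau)^\top .
\]
Since $\log t=o(\lambda_{\min}(B_t))$, the matrix $B_t$ is invertible for large $t$. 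Writing $M_t:=U_tB_t^{-1/2}$, we have $U_t^+ = B_t^{-1/2}M_t^+$ whenever $M_t$ has the same rank structure, and I would restrict to the full-rank situation used in the conclusion. Then
\[
\tilde\Theta(t)-\Theta^*=-\Theta^*(U_tU_t^+-\mathbb I)-W_tB_t^{-1/2}M_t^+ .
\]
When $\lambda_{\min}(M_t^\top M_t)>0$, the first term vanishes. In general it lives on the null directions, and the bound is stated with $\lambda_{\min}^+$ to handle this. The stochastic term satisfies
\[
\|W_tB_t^{-1/2}M_t^+\|_F^2\le \|W_tB_t^{-1/2}\|_F^2\,\big/\,\lambda^+_{\min}(M_t^\top M_t),
\]
because $\|M_t^+\|_2^2=1/\lambda_{\min}^+(M_t^\top M_t)$.

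The key step is to bound the self-normalized martingale term $\|W_tB_t^{-1/2}\|_F^2=O(\log t)$ a.s. I will use the filtration $\mathcal F_\tau$ generated by all data up to $\tau$, together with $p(\tau+1)$, $v^{2}(\tau+1)$, $v^{3}(\tau+1)$, $e^{2}$ and $e^{3}$. The regressor $\delta(\tau)\xi^{2}(\tau)$ depends only on $p(\tau)$, $v^{2}(\tau)$ and $v^{3}(\tau)$, since $\hat x^{k}(\tau)=H(p(\tau)+v^{k}(\tau))$ and $p(\tau)$ is chosen from past data. It is therefore $\mathcal F_{\tau-1}$-measurable. Trial $1$ is independent across $k$ (Assumption~\ref{ass:eiv_noise}), so $w(\tau)$ is independent of $\mathcal F_{\tau-1}$, has zero mean, and has a bounded $a$-th moment with $a>2$, using the boundedness of $v$. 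This is exactly the point where the repeated-sampling design removes the EIV correlation. The regressor is built from trial $2$ and the selection from trial $3$, while the noise comes from trial $1$.

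Applying row by row the Lai–Wei self-normalized inequality used in Lemma~\ref{thm:sufficient_strong_consistency} to the martingale $\sum\delta\xi^{2}w_i$, with $B_t$ as its quadratic variation (plus a fixed regularization for the early steps), gives
\[
\|W_tB_t^{-1/2}\|_F^2=O(\log\lambda_{\max}(B_t))\quad\text{a.s.}
\]
Since $\xi^{2}$ is bounded, because $\Phi$ is polynomial on compact $\mathcal X$, we get $\lambda_{\max}(B_t)\le Ct$ and hence the bound $O(\log t)$. Combining this with the decomposition yields the stated rate. The consequence follows immediately: if $\log t=o(\lambda_{\min}(M_t^\top M_t))$, then $M_t^\top M_t$ is eventually positive definite, so $\lambda_{\min}^+=\lambda_{\min}$ and the bias term vanishes.

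The main obstacle is the measurability and filtration bookkeeping. One must make sure that $\delta(\tau)$, the choice of $p(\tau)$ in the adaptive switching, and $\xi^{2}(\tau)$ are all predictable with respect to a filtration to which $w(\tau)$ is a martingale difference. One must also handle the early, possibly singular, phase of $B_t$. I would treat the latter by replacing $B_t$ with $B_t+\mathbb I$ in the martingale bound; this changes the constants only once $\lambda_{\min}(B_t)\to\infty$.
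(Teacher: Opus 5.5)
Your argument follows the paper's proof. You substitute \eqref{eq:model_eiv_2} to get $X_t=-\Theta^*U_t+W_t$ and factor through $B_t^{\pm 1/2}$, so the squared error becomes $\operatorname{tr}(W_tB_t^{-1}W_t^\top)$ divided by $\lambda^+_{\min}(B_t^{-1/2}U_t^\top U_tB_t^{-1/2})$. You then bound each row with the Lai--Wei self-normalized lemma and use $\lambda_{\max}(B_t)=O(t)$. Your filtration, which adjoins $p(\tau+1)$, $v^{2}(\tau+1)$, $v^{3}(\tau+1)$ to the past, is more careful than the paper's. The paper's filtration is generated by the noises alone, so it does not make $\delta(\tau)\xi^{2}(\tau)$ predictable as Lai--Wei requires. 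Your use of $\|AB\|_F\le\|A\|_F\|B\|_2$ also gives the Frobenius bound directly, whereas the paper bounds the spectral norm.

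The one real soft spot is the invertibility of $U_t$. Your remark that the bound ``is stated with $\lambda^+_{\min}$ to handle'' the null directions is not correct. The decomposition is $\tilde\Theta(t)-\Theta^*=\Theta^*(U_tU_t^+-\mathbb I)-W_tU_t^+$; your sign on the first term is flipped. When $U_t$ is singular, the bias $-\Theta^*(\mathbb I-U_tU_t^+)$ is the projection of $\Theta^*$ onto $\ker U_t^\top$, and it does not shrink like $\log t/\lambda^+_{\min}(\cdot)$. Moreover, the identity $U_t^+=B_t^{-1/2}(U_tB_t^{-1/2})^+$ needs invertibility. So under only $\log t=o(\lambda_{\min}(B_t))$, the rate requires $U_t$ to be nonsingular for all large $t$. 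The paper supplies this by citing Lemma~\ref{lemma:eiv_excitation} to assert that $U_t$ is nonsingular for $t\ge t_0$: since $\mathbb E[\delta\xi^{1}\xi^{2\top}]\succeq c_{12}\mathbb I$, the strong law makes the normalized $U_t$ converge to a nonsingular limit. That lemma assumes Gaussian probing, which is outside the theorem's stated hypotheses. Restricting the rate claim to the event that $U_t$ is eventually invertible is therefore a defensible alternative, but you should say so explicitly rather than attribute it to $\lambda^+_{\min}$. For the ``consequently'' part your argument is complete as written, because $\lambda_{\min}(B_t^{-1/2}U_t^\top U_tB_t^{-1/2})>0$ already forces $U_t$ to be invertible and kills the bias term.
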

\begin{proof}
    Presented in Appendix \ref{app:proofs_main}.
\end{proof}

Matrix $B_t^{-\frac{1}{2}} U_t^\top U_t B_t^{-\frac{1}{2}}$ can be viewed as the endogenous-noise representation of the information matrix $S_{t}^{-1}$  in Section~\ref{sec:measurement_noise}. It is worth noting that Theorem \ref{thm:eiv_sufficient_strong_consistency} requires two separate growth conditions, one on the spectrum of $B_t$, and one on $B_t^{-\frac{1}{2}} U_t^\top U_t B_t^{-\frac{1}{2}}$. The lemma that follows 
proves that normally distributed probing incentives are adequate to ensure both growth conditions. 

\begin{lemma}
\label{lemma:eiv_excitation}
Suppose Assumptions \ref{ass:types}, \ref{ass:eiv_noise}, and~\ref{ass:exploration_margin} hold. Let $\{p(\tau)\}_{\tau=1}^t$ 
be an incentive policy of ${\mathbb{R}}^n$-valued, i.i.d. random variables with $p(t)\sim \mathcal{N}(0, \sigma_p^2\ \mathbb I)$. Then, for $i,j \in \{1,2\}$, there exists a $c_{ij}>0$ such that
\[
\mathbb E[\xi^i(t)\xi^j(t)^\top\delta(t)]
\succeq c_{ij} \mathbb{I},
\]
where $\xi^{k}(t)$, $\delta(t)$ are defined in \eqref{eq:eiv_estimator}.
\end{lemma}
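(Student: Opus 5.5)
Fix $i,j\in\{1,2\}$ and a unit vector $w\in\mathbb R^D$. We show that $\mathbb E[(w^\top\xi^i(t))(w^\top\xi^j(t))\delta(t)]$ is bounded below uniformly in $w$; compactness of the unit sphere and continuity then give $c_{ij}>0$. Since $\xi^i\xi^{j\top}$ is not symmetric for $i\neq j$, the statement is read through the quadratic form $w^\top(\cdot)w$, i.e.\ for the symmetric part. The key tool is conditional independence. Given $p(t)$, the responses $\hat x^k(t)=H(p(t)+v^k(t))$ for $k\in[3]$ are independent, because $v^1,v^2,v^3$ are independent of each other and of $p(t)$ by Assumption~\ref{ass:eiv_noise}. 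Conditioning on $p=p(t)$ and writing $g(p):=\mathbb E[\xi^1(t)\mid p]=\mathbb E[\xi^2(t)\mid p]$ (equal since $v^1,v^2$ are identically distributed), we get for $i\neq j$
\[
\mathbb E\bigl[w^\top\xi^1\xi^{2\top}w\,\delta\bigr]=\mathbb E\bigl[(w^\top g(p))^2\,\pi(p)\bigr],\qquad \pi(p):=\mathbb P\bigl(d_{\mathcal X}(\hat x^3)\ge\eta\mid p\bigr).
\]
This is also where $\hat x^3$ is used: it decouples the selection indicator from $\xi^1,\xi^2$. For $i=j$, Jensen gives $\mathbb E[(w^\top\xi^i)^2\mid p]\ge (w^\top g(p))^2$, so this case reduces to the same lower bound. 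A subtlety is that $\xi^i$ must be well defined whenever $\delta=1$. The displayed inequality before Theorem~\ref{thm:eiv_sufficient_strong_consistency} guarantees $\hat x^1,\hat x^2\in\operatorname{int}\mathcal X$ on that event, so $H$ applies to all three trials.

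Next we localize. By Assumption~\ref{ass:exploration_margin}, $\eta>2h_2\sqrt n\bar v$ and $|\mathcal X|>2\eta$. Hence the set $\mathcal X_{2\eta}$ of points at distance more than $2\eta$ from $\partial\mathcal X$, or at least a nonempty open inner box $\mathcal X'$ with margin exceeding $\eta+h_2\sqrt n\bar v$, is nonempty and open; this step needs care in checking that the assumed margin suffices. Let $\mathcal P'=-\Theta^*\Phi(\mathcal X')\subset\mathcal P$, which is open by Lemma~\ref{lemma:NE_bijection_mapping}. For $p\in\mathcal P'$ we have $p+v^k\in\mathcal P$, and by \eqref{eq:H_singular_values_bounded} with path-connectedness (or a direct monotonicity argument via Assumption~\ref{ass:types}) $\|H(p+v^3)-H(p)\|_2\le h_2\sqrt n\bar v$. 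Thus $d_{\mathcal X}(\hat x^3)\ge\eta$ surely and $\pi(p)=1$. Since the Gaussian density is bounded below on the bounded set $\mathcal P'$ by some $\rho>0$,
\[
\mathbb E[(w^\top g(p))^2\pi(p)]\ \ge\ \rho\int_{\mathcal P'}(w^\top g(p))^2\,dp.
\]

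It remains to show that $\int_{\mathcal P'}(w^\top g(p))^2dp>0$ for every unit $w$; this is the main obstacle. If the integral vanished, then $w^\top g\equiv 0$ on $\mathcal P'$, that is, $\mathbb E[q(H(p+v))]=0$ for all $p\in\mathcal P'$, where $q(x)=w^\top\Phi(x)$ is a nonzero polynomial. Unlike Lemma~\ref{lemma:excitation}, the noise averaging means we cannot simply conclude that $q$ vanishes on an open set. The first attempt is to change variables $x=H(p)$ and write the condition as a convolution-type identity $\mathbb E[q(H(H^{-1}(x)+v))]=0$ on $\mathcal X'$. One then uses that $q\circ H$ is real-analytic, since $H$ inverts the polynomial map $-\Theta^*\Phi$ whose Jacobian is nonsingular. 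Vanishing of the $v$-average of an analytic function on an open set does not by itself force the function to be zero, so an extra structural argument is needed. I would try a degree/leading-order argument: scale toward the interior region where $\bar v$ is small relative to the curvature, or exploit that the identity holds on an open set of $p$ and differentiate to obtain a Fredholm-type contradiction. If that fails, a fallback is a perturbative bound: $\|g(p)-\Phi(H(p))\|_2\le L\bar v$, where $L$ is a Lipschitz constant of $\Phi\circ H$. Combined with the lower bound $c_0$ from the proof of Lemma~\ref{lemma:excitation} applied on $\mathcal P'$, this yields positivity whenever $L\bar v$ is small relative to $\sqrt{c_0}$. Strictly, this fallback needs a smallness condition not present in the statement. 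Once pointwise positivity is established, continuity in $w$ and compactness of the sphere give a uniform $c_{ij}$.
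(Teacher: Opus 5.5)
Your reduction is correct, and more careful than the paper's. Since $\delta$ is a function of $(p,v^3)$ alone and $v^1,v^2,v^3$ are independent, you get $\mathbb E[\xi^1\xi^{2\top}\delta]=\mathbb E[\pi(p)g(p)g(p)^\top]$, with $g$ the full noise average. On your inner region $\mathcal P'$ you have $\pi\equiv 1$ for any bounded noise law. This uses the global bound $\|x^*(p)-x^*(p')\|_2\le h_2\|p-p'\|_2$, which follows by adding the $R$-weighted variational inequalities of the two equilibria and using strong monotonicity; that, not path-connectedness, is the argument to use.

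But the proposal is not a proof. Everything hinges on showing that $p\mapsto\mathbb E_v[w^\top\Phi(H(p+v))]$ cannot vanish on $\mathcal P'$ for $w\neq 0$, and neither of your routes delivers this under the stated assumptions.
\begin{itemize}
\item The degree argument works only when $w^\top\Phi\circ H$ is a polynomial, i.e.\ when $H$ is affine (affine $G_0$). Then $f\mapsto\mathbb E f(\cdot+v)$ is the identity plus a degree-lowering nilpotent map on polynomials of degree at most $d$, hence injective.
\item For nonlinear $G_0$, $H$ inverts a polynomial map and $w^\top\Phi\circ H$ is merely analytic. Averaging against a compactly supported law can annihilate nonzero analytic functions: uniform $v$ on $[-\bar v,\bar v]$ kills $\sin(\pi s/\bar v)$. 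So analyticity alone gives nothing. What is missing is an injectivity property of this averaging operator on the specific $D$-dimensional space $\{w^\top\Phi\circ H\}$, for an arbitrary bounded noise law.
\item The perturbative fallback needs $L\bar v$ small relative to the excitation constant, and Assumption~\ref{ass:exploration_margin} does not provide that.
\end{itemize}

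The paper's proof has the same skeleton as yours: conditioning on $p$, independence from $v^3$, Jensen for $i=j$, localization near $p^*$, and a change of basis via Lemma~\ref{lemma:M_matrix}. At exactly this point it simply asserts that $\int_{\|v\|_2\le\varepsilon}\Phi(H(p+v))^\top w\, f_v(v)\,dv=0$ on a ball forces $\Phi(H(q))^\top w=0$, with no argument. It also assumes $v$ has a density charging a neighbourhood of $0$. And it replaces the full mean $\mu_p$ by a mean over $\|v^i\|_2\le\varepsilon$, which is not a valid lower bound for the indefinite cross term. So you will not find the missing step there.

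Two smaller points.
\begin{itemize}
\item For $i=j$, do not pass through Jensen. Directly, $\mathbb E[(w^\top\xi^i)^2\delta]\ge\mathbb E[(w^\top\Phi(H(p+v^i)))^2\mathbf{1}\{p\in\mathcal P'\}]>0$. For each fixed $v^i$, the map $p\mapsto w^\top\Phi(H(p+v^i))$ is continuous and not identically zero on the open set $\mathcal P'$, because $H(\cdot+v^i)$ maps it diffeomorphically onto an open set and $w^\top\Phi$ is a nonzero polynomial. Compactness of the unit sphere then gives $c_{11},c_{22}>0$. Only the cross constant $c_{12}$, the one that controls $U_t$, is genuinely open.
\item The existence of $\mathcal X'$ does not follow from Assumption~\ref{ass:exploration_margin}. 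A lower bound on the diameter $|\mathcal X|$ does not even make $\{x: d_{\mathcal X}(x)\ge\eta\}$ nonempty (take a thin box). An extra geometric hypothesis is needed; the paper implicitly uses $d_{\mathcal X}(x^\dagger)>\eta$.
\end{itemize}
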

\begin{proof}
    Presented in Appendix \ref{app:proofs_auxiliary}.
\end{proof}

Taken along with the strong law of large numbers, the above lemma implies that both $B_t$ and $U_t$ (consequently, $B_t^{-\frac{1}{2}} U_t^\top U_t B_t^{-\frac{1}{2}}$ as well) grow linearly in $t$ under normally distributed probing incentives, for almost all trajectories of $\{v^{k}(t)\}_t$ and $\{e^{k}(t)\}_t$. Then it is natural to modify Algorithm~\ref{alg:one} so that it maintains the adequate growth of $\lambda_{\min}(B_t^{-\frac{1}{2}} U_t^\top U_t B_t^{-\frac{1}{2}})$. This is achieved via the incentives $p(t)$ in \eqref{eq:control} with $\hat\Theta(t)$ replaced by $\tilde{\Theta}(t)$, and a modified switching schedule $\{\tau_k\}_{k= 1}^\infty$ and $\{\sigma_k\}_{k=1}^\infty$ according to
\begin{equation}
\label{eq:eiv_switches}
\begin{aligned}
    \tau_1& = 0, \text{ and}  \\
    \sigma_k & = \inf\{t\geq \tau_k \!: \!\lambda_{\min}\!(B_t^{-\frac{1}{2}}U_t^\top U_tB_t^{-\frac{1}{2}}) \!<\! A(t)\}, \\
    \tau_{k+1} & = \inf\{t\geq \sigma_k\! : \!\lambda_{\min}\!(B_t^{-\frac{1}{2}}U_t^\top U_tB_t^{-\frac{1}{2}}) \!\geq \!A(t)\}, \\
\end{aligned}
\end{equation}
where $A(t) = t^\gamma \log(t)$, given in Algorithm \ref{alg:two}.

The RAID Algorithm is adapted to the endogenous-noise setting in Algorithm \ref{alg:two}. Utilizing incentive policy \eqref{eq:control} and switching schedule \eqref{eq:eiv_switches}, the planner can guarantee the a.s. convergence rate of $\tilde \Theta(t)$ to $\Theta^*$, and that the $t$-stage average regret $t^{-1}\mathcal{J}_t$ a.s. vanishes.
\begin{algorithm}
\caption{no--Regret Adaptive Incentive Design (RAID) in the Endogenous-Noise Case}\label{alg:two}
\begin{algorithmic}[1]
\Statex \textbf{Parameters:}
\Statex Parameters $\gamma \in [1/2,\,  1)$, $\sigma^2_p > 0$, and $\eta > 0$
\Statex Desired profile $(x^\dagger, u^\dagger)$ with $x^\dagger \in \operatorname{int} \mathcal{X}$
\Statex Iterations $t\geq 1$
\Statex \textbf{Output:}
\Statex Estimate and incentive $\{ \tilde \Theta(\tau), p(\tau)\}_{\tau=1}^t$
\Statex
\State Define function $A(\tau) = \tau^{\gamma}\log \tau$\;
\State Initialize $X_{1} = 0$, $U_{1}=0$, $B_{1}=0$, and $\tilde \Theta(1)$\;
\NoIndentFor{$\tau = 2, \ldots, t$}
\State \(\textrm{Explore}_{\tau} \gets
        \left(\det(B_{\tau-1}) = 0\right)
        \textbf{ or } \)
\Statex \(\qquad \qquad 
        \Big(\lambda_{\min}\!\big(
        B_{\tau-1}^{-\frac{1}{2}}
        U_{\tau-1}^\top U_{\tau-1}
        B_{\tau-1}^{-\frac{1}{2}}
        \big) < A(\tau-1)\Big)
        \)
    \If{$\textrm{Explore}_{\tau}$} \Comment{Exploration phase}
    \State Draw incentive parameter $p(\tau) \sim \mathcal N(0,\sigma^2_p\ \mathbb I)$
    \State Issue $\gamma(\cdot, p(\tau))$ thrice
    \State Observe agent responses $\hat x^{k}(\tau)$, $k\in [3]$
    \State $\delta(\tau) \gets \mathbf{1}\{d_{{\mathcal{X}}}(\hat x^{3}(\tau)) \geq \eta\}$
    \State $X_\tau \gets X_{\tau-1} + \delta(\tau) \hat p^{{1}}(\tau) \xi^{{2}}(\tau)^\top$
    \State $U_\tau \gets U_{\tau-1} + \delta(\tau) \xi^{{1}}(\tau) \xi^{{2}}(\tau)^\top$
    \State $B_\tau \gets B_{\tau-1} + \delta(\tau) \xi^{{2}}(\tau) \xi^{{2}}(\tau)^\top$
    \State Update  $\tilde \Theta(\tau)$, according to \eqref{eq:eiv_estimator}
\Else \Comment{Exploitation phase}
    \State Update $p(\tau) \gets  -\tilde\Theta(\tau-1)  \Phi(x^\dagger)$
    \State Issue incentive $\gamma(\cdot, p(\tau))$
    \State Maintain $B_{\tau} \gets B_{\tau-1}$, $U_{\tau} \gets U_{\tau-1}$, $X_\tau \gets X_{\tau-1}$
    \State Maintain $\tilde \Theta(\tau) \gets \tilde \Theta(\tau-1)$
\EndIf
\EndNoIndentFor
\end{algorithmic}
\end{algorithm}

\begin{theorem}
\label{thm:eiv_algorithm_regret}
    Let Assumptions \ref{ass:feasibility}, \ref{ass:types}, \ref{ass:eiv_noise} and~\ref{ass:exploration_margin} hold. For all  $\gamma \in [\frac{1}{2},1)$, the type estimates $\{\tilde \Theta(\tau)\}_{\tau =1}^t$ produced by Algorithm \ref{alg:two} satisfy
    \[\| \tilde \Theta(t) - \Theta^*\|_F^2 = O(t^{-\gamma}) \text{ a.s.}\]
    Moreover, the system planner's regret $\mathcal{J}_t$, as defined in \eqref{eq:eiv_regret}, satisfies
    \[ \mathcal{J}_t = O (t^\gamma \log t)
    \text{ a.s.}\]
\end{theorem}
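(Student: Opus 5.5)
The plan mirrors the proof of Theorem~\ref{thm:algorithm_regret}, with Theorem~\ref{thm:eiv_sufficient_strong_consistency} in place of Lemma~\ref{thm:sufficient_strong_consistency} and Lemma~\ref{lemma:eiv_excitation} in place of Lemma~\ref{lemma:excitation}. Write $M_t = B_t^{-\frac12}U_t^\top U_t B_t^{-\frac12}$. The central intermediate claim is the endogenous-noise analogue of Lemma~\ref{lemma:number_of_exploration_samples}: under Algorithm~\ref{alg:two}, $\#(t)=\Theta(A(t))$ and $\lambda_{\min}(M_t)=\Theta(A(t))$ a.s.

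\emph{Number of probes.} Let $s_1<s_2<\dots$ be the exploration times. The probes $p(s_j)$ are i.i.d.\ Gaussian, and the noises $e^k(s_j),v^k(s_j)$ are independent of the switching history. Hence the summands of $U_t$ and $B_t$ taken along the probing subsequence are i.i.d.\ and bounded, since $\Phi$ is bounded on $\mathcal X$. The strong law of large numbers and Lemma~\ref{lemma:eiv_excitation} then give, along the exploration index $N=\#(t)$,
\[
U_t/N\to \bar U \text{ with } \bar U \succeq c_{12}\mathbb I \text{ in the symmetric-part sense, and } B_t/N\to\bar B \text{ with } c_{22}\mathbb I\preceq \bar B\preceq C\mathbb I .
\]
Consequently $\lambda_{\min}(M_t)=\Theta(N)$ a.s., since $U_t^\top U_t\succeq(\sigma_{\min}(U_t))^2\mathbb I$ and $\sigma_{\min}(\bar U)\ge c_{12}$.

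The argument is then a counting one. While $\lambda_{\min}(M_t)<A(t)$ the algorithm keeps exploring, so $N$ grows linearly until $cN\ge A(t)$; this gives $\#(t)\ge c'A(t)$. Conversely, exploration stops as soon as $CN\ge A(t)$, and since $A$ increases by $o(1)$ per step, the overshoot is bounded. This gives $\#(t)\le C'A(t)$. The case $\det B_{\tau-1}=0$ occurs only finitely often a.s.

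\emph{Estimation rate.} Since $\log t=o(A(t))$, Theorem~\ref{thm:eiv_sufficient_strong_consistency} yields $\|\tilde\Theta(t)-\Theta^*\|_F^2=O(\log t/A(t))=O(t^{-\gamma})$ a.s.

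\emph{Regret.} Split $\mathcal J_t$ into exploration and exploitation rounds.

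On exploration rounds each summand is bounded. $\hat x^1\in\mathcal X$ is compact, and $u^1=\gamma(\hat x^1,p)$ with $\|p\|$ Gaussian has finite moments, so a.s.\ $\max_{\tau\le t}\|p(\tau)\|^2=O(\log t)$. Since $\Psi$ is $L_\Psi$-Lipschitz, these rounds contribute $O(\#(t)\log t)$. This is too large by a $\log$ factor. To fix it, I would bound the sum of $\|p(s_j)\|^2$ over probing rounds by the strong law as $O(\#(t))$, which gives $O(A(t))=O(t^\gamma\log t)$.

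On exploitation rounds $p(\tau)=p^*+\Delta(\tau)$ with $\Delta(\tau)=-(\tilde\Theta(\tau-1)-\Theta^*)\Phi(x^\dagger)$, so $\|\Delta\|^2=O(\tau^{-\gamma})$. The margin Assumption~\ref{ass:exploration_margin}, together with $x^\dagger\in\operatorname{int}\mathcal X$, should ensure that for large $\tau$ both $p+v^1$ and $p^*+v^1$ lie in $\mathcal P$. I expect this containment to be the main obstacle: it may require $\bar v$ small relative to $d_{\mathcal X}(x^\dagger)$, or else treating boundary responses through the Lipschitz property of the projected response map, which the monotonicity in Assumption~\ref{ass:types} provides with constant $h_2$. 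Granting it, Lemma~\ref{lemma:NE_bijection_mapping} gives
\[
\|\hat x^1-x_r^1\|\le h_2\|\Delta\|,
\]
and then
\[
\|u^1-u^1_r\|\le \|\Delta\|\,\|\hat x^1-x^\dagger\|+\|p^*\|\,\|\hat x^1-x_r^1\|=O(\|\Delta\|).
\]
Lipschitz continuity of $\Psi$ bounds each summand by $O(\tau^{-\gamma})$, and summing gives $O(t^{1-\gamma})\le O(t^{\gamma})$ because $\gamma\ge\frac12$.

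Adding the two contributions gives $\mathcal J_t=O(t^\gamma\log t)$ a.s.
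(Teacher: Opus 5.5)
Your proposal is correct and follows the paper's proof. The shared steps are: the two-sided bound $\lambda_{\min}(B_t^{-\frac12}U_t^\top U_tB_t^{-\frac12})=\Theta(\#(t))$ from Lemma~\ref{lemma:eiv_excitation} and the strong law; the switching argument giving $\Theta(A(t))$, where you avoid the increment bound of Lemma~\ref{lemma:eiv_properties} by using that $\#(t)$ grows by at most one per step, which works; Theorem~\ref{thm:eiv_sufficient_strong_consistency} for the rate; and the same exploration/exploitation split of $\mathcal J_t$, with a bound on $\|u^1-u_r^1\|_2$ that is actually more careful than the paper's.

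The step you leave as granted is exactly where the paper just invokes the $h_2$ bound of Lemma~\ref{lemma:NE_bijection_mapping}. Assumption~\ref{ass:exploration_margin} constrains only the diameter of $\mathcal X$ and does not put $p^*+v^1$ in $\mathcal P$, so your first suggested fix does not follow from the hypotheses. Your second fix does close it: adding the variational inequalities characterizing $x^*(p)$ and $x^*(q)$ and using the strong monotonicity of $RG_0$ gives $\|x^*(p)-x^*(q)\|_2\le h_2\|p-q\|_2$ for all $p,q\in\mathbb R^n$.
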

\begin{proof}
    Presented in Appendix \ref{app:proofs_main}.
\end{proof}

It can also be shown that Algorithm~\ref{alg:two} enforces the  $\log t = o(\lambda_{\min}(B_t^{-\frac{1}{2}}U_t^\top U_t  B_t^{-\frac{1}{2}})$ growth of the information matrix and an adequate number of exploration iterations. This result is given in Lemma~\ref{lemma:eiv_number_of_exploration_samples}, Appendix~\ref{app:proofs_main}.

\section{Numerical Examples}
\label{sec:numerical_examples}
We now illustrate the performance of the proposed incentive design algorithms through numerical simulations. 
The general setup is as follows.
Consider a game played between the system planner and $n=3$ players, over the feasible set ${\mathcal{X}} = [-1,1]^3$.
Players have polynomial nominal cost functions of  $3$rd degree, as follows.
\[\begin{aligned}
    \ell_1 (x) & = 3 (\tfrac{1}{6}x_1^3 + x_1^2  + x_1x_2 - x_1x_3 + x_1 )\\
    \ell_2 (x) & = 3 (\tfrac{1}{6}x_2^3 + x_2^2  - x_1x_2 + x_2x_3 - x_2) \\
    \ell_3 (x) & = 3 (\tfrac{1}{6}x_3^3 + x_3^2  + x_1x_3 - x_2x_3 + x_3)
\end{aligned}\]
The nominal cost functions $\ell_i(x)$ lead to the parametrization of agent types according to \eqref{eq:parametrization}, where $\Phi(x): {\mathbb{R}}^3 \to {\mathbb{R}}^{10}$ is a monomial basis for $2$nd degree polynomials
\[\Phi(x) \!=\! \begin{bmatrix}
    1 & x_1  &  x_2  &  x_3 &  x_1^2  &  x_1 x_2  & x_1x_3  &  x_2^2  &  x_2x_3 &  x_3^2
\end{bmatrix}^\top,\]
and the corresponding type matrix $\Theta^* \in {\mathbb{R}}^{3\times 10}$,
\[\Theta^*\! = 3\!\begin{bmatrix}
    \!\phantom{-} 1 & \phantom{-} 2 & \phantom{-} 1 & -1 & 0.5 & \phantom{-}0 & \phantom{-}0 & \phantom{-}0   & \phantom{-}0 & \phantom{-}0 \\
    \!-1 & -1 & \phantom{-} 2 &  \phantom{-}1 & \phantom{-}0   & \phantom{-}0 & \phantom{-}0 & 0.5 & \phantom{-}0 & \phantom{-}0 \\
    \!\phantom{-} 1 &  \phantom{-}1 & -1 & \phantom{-} 2 & \phantom{-}0 & \phantom{-}0 & \phantom{-}0 & \phantom{-}0 & \phantom{-}0 & 0.5
\end{bmatrix}.\]
One can verify that type $\Theta^*$  satisfies $ \operatorname{Sym}(\Theta^* {\nabla}\Phi(x)) = \frac{1}{2}\operatorname{Diag}(2+x_1,\, 2+x_2,\,  2+x_3) \succeq \frac{1}{2}\mathbb I$ for every $x \in {\mathcal{X}}$, so Assumption~\ref{ass:types} holds with $R = \mathbb I$ and $m=1$.
The planner's social cost is selected as $\Psi(x,u) = \frac{1}{2}\|x-x^\dagger\|_2^2 + \|u\|_2^2$, where $x^\dagger = (0.5, \, 0.5, \, 0.5)^\top$.

\subsection{The Measurement-Noise Case}
We first validate the predictions of Section \ref{sec:measurement_noise} in problems in which Assumption \ref{ass:independet_noise} holds.
We take the noise process $\{e(t)\}_{t= 1}^\infty$ in \eqref{eq:model} to be i.i.d. and normally distributed with variance $\sigma_e^2 = 0.1$.
The system planner utilizes Algorithm~\ref{alg:one} with parameters $\gamma = 0.5$ and $\sigma_p^2 = 10$.

Theorem \ref{thm:algorithm_regret} predicts that the parameter estimation error $\| \hat \Theta(t)- \Theta^*\|_F^2$ decays with  $O (t^{-\gamma})$ a.s. and the normalized regret $t^{-1}\mathcal{R}_t$ decays with $O(t^{\gamma-1}\log t)$ a.s.  Figure~\ref{fig:algorithm_estimation_regret} depicts the estimation error and the planner's accumulated regret, computed across 10 independent runs of Algorithm \ref{alg:one}. The results obtained are consistent with the almost-sure predicted decay rates.
\begin{figure}
    \centering
    \includegraphics[width=0.9 \linewidth]{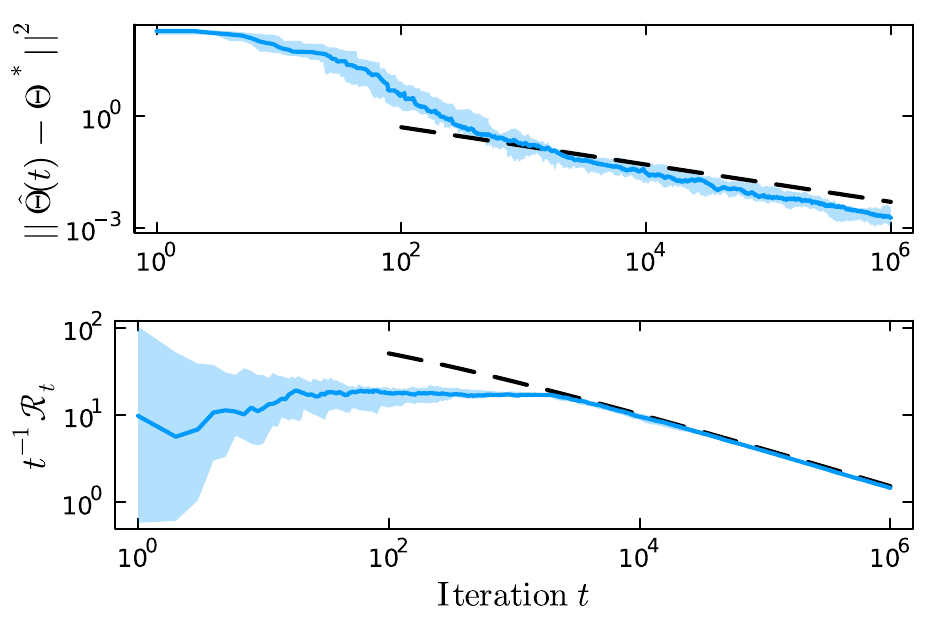}
    \caption{Median (solid) and min-max envelope (shaded) of the parameter estimation error $\|\hat \Theta(t)- \Theta^*\|_F^2$ and regret $t^{-1}\mathcal{R}_t$, produced across 10 independent runs of Algorithm \ref{alg:one}. Dashed lines indicate the a.s. convergence rates $O(t^{-\gamma})$ (top) and $O(t^{\gamma-1}\log t)$ (bottom) predicted by Theorem \ref{thm:algorithm_regret}.}
    \label{fig:algorithm_estimation_regret}
\end{figure}

Lemma~\ref{lemma:number_of_exploration_samples} predicts an a.s. $\Theta(A(t))$ growth rate for both $\lambda_{\min}(t)$ and the total number of exploration iterations $\#(t)$.
Figure \ref{fig:excitation} presents the signal $\operatorname{tr}(\Sigma_t) = \Theta(1 / \lambda_{\min}(t))$, that defines the exploration and exploitation phases,  and the aggregate number of exploration samples $\#(t)$. 
It is evident in Figure~\ref{fig:excitation} that the a.s. $\Theta(1/ A(t))$ decay of $\operatorname{tr}(\Sigma_t)$ is consistent with the established excitation rate, and so is the a.s. $\Theta(A(t))$ growth of $\#(t)$.

\begin{figure}
    \centering
    \includegraphics[width=0.9\linewidth]{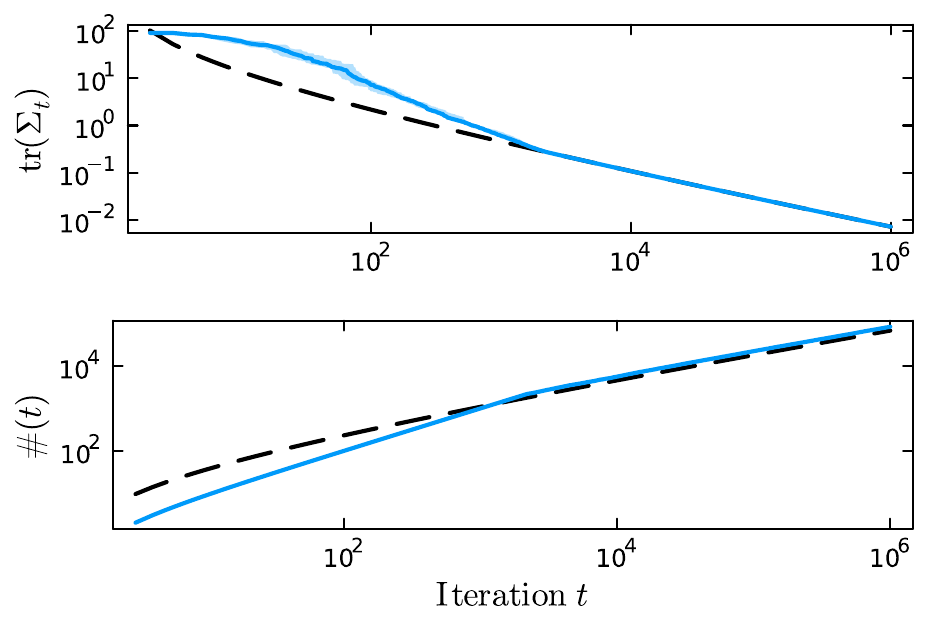}
    \caption{Median (solid) and min-max envelope (shaded)  of the excitation signal $\operatorname{tr}(\Sigma_t)$ and number of exploration samples $\#(t)$ up to iteration $t$, produced across 10 independent runs of Algorithm \ref{alg:one}. Dashed lines indicate the a.s. rates $\Theta(1/A(t))$ (top) and $\Theta(A(t))$ (bottom) predicted by Lemma~\ref{lemma:number_of_exploration_samples}.}
    \label{fig:excitation}
\end{figure}

Figure \ref{fig:different_noise}  
illustrates that the convergence of the parameter estimator is due to the injected probing excitation rather than the measurement noise $\{e(t)\}_t$.
Notably, $\hat \Theta(t)$ is strongly consistent even when the support of $e(t)$ is of measure zero (for instance, when each component $e_i(t)$ is Rademacher distributed on $\pm0.1$  as in Figure \ref{fig:different_noise}, bottom row). 
\begin{figure}
    \centering
    \includegraphics[width=0.9\linewidth]{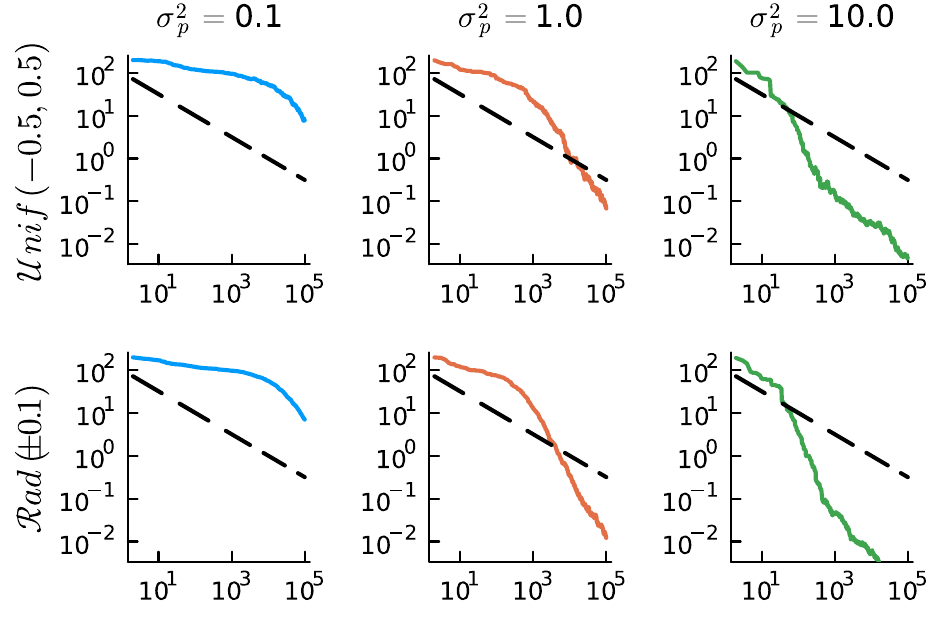}
    \caption{Trajectories of the estimation error $\|\tilde \Theta(t)- \Theta^*\|_F^2$ for different noise processes $\{e(t)\}_t$.
    Dashed lines indicate the predicted $O(t^{-0.5})$ rate.
    Columns vary the probing variance $\sigma_p^2$  in Algorithm~\ref{alg:one}.
    Measurement noise is an ${\mathbb{R}}^n$-random variable where $e_i(t)\sim \mathcal{U}nif [-0.5, 0.5]$ (top row) or Rademacher distributed on $\{\pm 0.1\}$ (bottom row).}
    \label{fig:different_noise}
\end{figure}

\subsection{The Endogenous-Noise Case}
This subsection validates the predictions of Section~\ref{sec:endogenous_noise} for problems with an endogenous noise process $\{v(t)\}_t$, where Assumptions \ref{ass:eiv_noise} and \ref{ass:exploration_margin} hold.
In~\eqref{eq:model}, we take the measurement noise $\{e(t)\}_{t= 1}^\infty$ to be i.i.d. and normally distributed with variance $0.1$.
We take the endogenous noise $\{v(t)\}_{t= 1}^\infty$ to be distributed according to the truncated normal distribution with variance $0.1$ and support on $[-0.1, 0.1]^n$.
One can verify that the set ${\mathcal{X}} = [-1,1]^n$ has diameter $|{\mathcal{X}}| = 2\sqrt{n}$, so the planner may select $\eta \in (0.4 \sqrt{n}, \sqrt{n}) $ to satisfy Assumption~\ref{ass:exploration_margin}.
The system planner utilizes Algorithm \ref{alg:two} with parameters $\gamma = 0.5$, $\eta = 0.7$ and $\sigma_p^2 = 10$.
The trajectories of estimation error $\|\tilde \Theta(t) - \Theta^*\|_F^2$ and normalized regret $t^{-1}\mathcal{J}_t$ are given in Figure \ref{fig:eivnoise_ribbons_error_regret}
and satisfy the a.s. predicted rates.

\begin{figure}
    \centering
    \includegraphics[width=0.9 \linewidth]{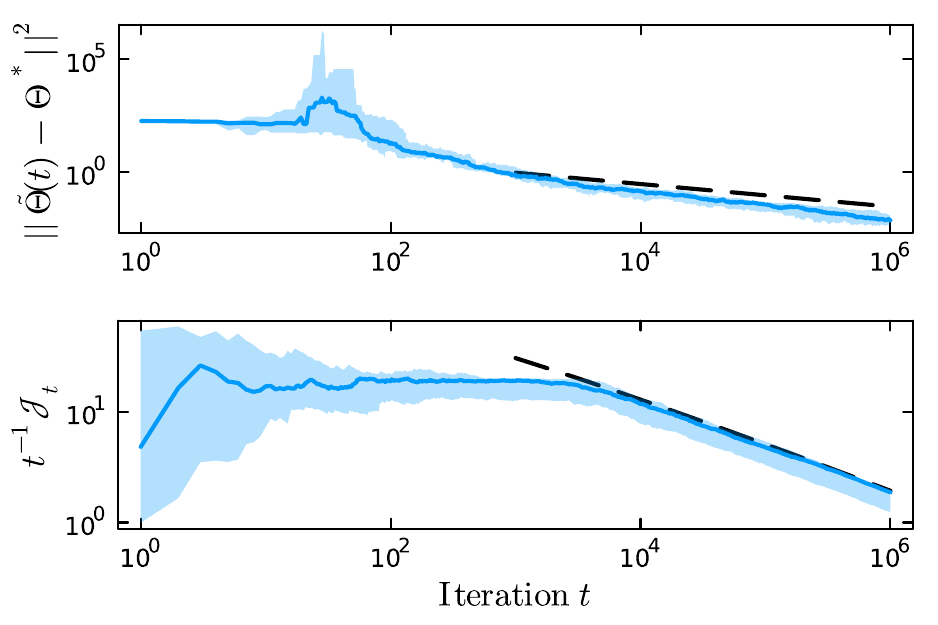}
    \caption{Median (blue) and min-max envelope (shaded) of the parameter estimation error $\|\tilde \Theta(t)- \Theta^*\|_F^2$ and regret $t^{-1}\mathcal{J}_t$, produced across 10 independent runs of Algorithm \ref{alg:two}. Dashed lines indicate the a.s. convergence rates $O(t^{-\gamma})$ (top) and $O(t^{\gamma-1}\log t)$ (bottom) predicted by Theorem \ref{thm:eiv_algorithm_regret}.}
    \label{fig:eivnoise_ribbons_error_regret}
\end{figure}

\subsection{Sensitivity to Model Misspecification}

We further investigate a two-player coupled oscillator game adapted from~\cite{ratliffAdaptiveIncentiveDesign2021}, in which the nominal costs do not lie in the span of $\Phi(x)$. Players have private costs
\begin{equation}
\label{eq:oscillator_cost}
\ell_i(x) = -a_i \cos(x_i) + \cos (x_1 - x_{2}),
\end{equation}
where $x \in {\mathcal{X}} = [-1,1]^2$ and $a_i>0$ are scalars.
The corresponding pseudo-gradient is
\[
G_0(x)
=
\begin{bmatrix}
a_1\sin(x_1)-\sin(x_1-x_2)\\
a_2\sin(x_2)+\sin(x_1-x_2)
\end{bmatrix}.
\]
The planner parameterizes the partial derivatives $\partial \ell_i / \partial x_i$ with the third-order monomial basis
\[\Phi(x) = \begin{bmatrix}
    1 & x_1 & x_2 & \ldots & x_1^3 & x_1^2x_2 &x_1x_2^2 & x_2^3
\end{bmatrix}^\top, \]
which corresponds to a Taylor approximation of the true trigonometric gradient $\partial \ell_i / \partial x_i \approx \theta_i^{*\top} \Phi(x)$. Hence, the estimator here is misspecified in the sense that it corresponds to a finite-dimensional surrogate rather than an exact representation of the cost parameters.  

We select parameters $a_1 = 6$ and $a_2 = 5$, for which $G_0$ is strongly monotone on ${\mathcal{X}}$.
We consider the desired profile $x^\dagger = (0.5, -0.2)^\top$ and $u^\dagger =(0,0)^\top$. The planner's optimal incentive is given by~\eqref{eq:optimal_policy} with
\[p^* = \begin{bmatrix}
    -\partial_{x_1} \ell_1(x^\dagger) \\ -\partial_{x_2} \ell_2(x^\dagger)
\end{bmatrix} = \begin{bmatrix}
    -a_1 \sin(x_1^\dagger) + \sin(x_1^\dagger-x_2^\dagger) \\ -a_2 \sin(x_2^\dagger) - \sin(x_1^\dagger-x_2^\dagger)
\end{bmatrix}.\]
The noise process $\{e(t)\}_{t=1}^{\infty}$ is selected as an i.i.d. normally distributed with variance $\sigma_e^2 = 0.1$, and the planner uses Algorithm~\ref{alg:one} with parameters $\gamma = 0.5$ and $\sigma_p^2 = 10$.

Figure~\ref{fig:incentive_approximation} presents the exploitation-stage incentive error $\|p(t)-p^*\|_2$. In contrast to the previous examples, the error does not converge to zero. Instead, the estimator $\hat \Theta(t)$ approaches the finite-dimensional surrogate $\Theta^*$, and the
incentive $\gamma(\cdot, p(t))$ approaches $\gamma(\cdot, p^*)$ up to a constant bias, induced by the mismatch between the true pseudo-gradient
and the polynomial model approximation. 
This indicates that the adaptive mechanism reconstructs an approximately optimal incentive,
though the guarantee $p(t) \to p^*$ no longer follows from Theorem~\ref{thm:algorithm_regret}.

\begin{figure}
    \centering
    \includegraphics[width=0.9 \linewidth]{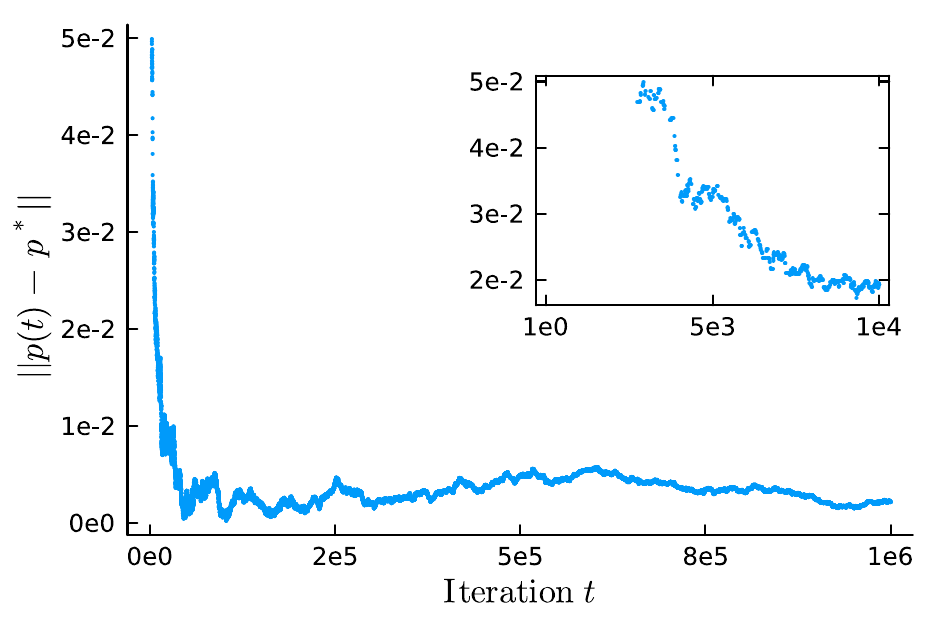}
    \caption{Incentive error $\|p(t)-p^*\|_2$ across the exploitation iterations of Algorithm~\ref{alg:one}. Because the trigonometric pseudo-gradient is approximated by a finite-dimensional surrogate, the error converges to a nonzero approximation floor.}
    \label{fig:incentive_approximation}
\end{figure}

\section{Concluding Remarks}
\label{sec:conclusion}

This work introduced the no-Regret Adaptive Incentive Design (RAID) problem and developed a unified framework for type identification and control in adaptive incentive design.
We establish that, under a diagonal strict convexity condition, an affine incentive law is sufficient and optimal within a broad class of convex incentives. Under this incentive structure, we show that the incentive–response map is unique and a diffeomorphism from an informative incentive region to the feasible response set, enabling learning schemes conditioned on informative responses.
We utilize a diminishing excitation condition to design a strongly consistent estimator 
and an incentive policy that alternates between probing (exploration) and estimate-based (exploitation) incentives. We prove that this policy 
achieves a parameter estimation rate of $O(t^{-0.5})$ and regret of $O(t^{0.5}\log t)$, almost surely. 
Finally, we extend the RAID framework to address the EIV bias induced by the presence of endogenous noise in the response model. We propose a repeated-sampling estimator and algorithm, which retain the same almost-sure parameter and regret convergence rates. 

There are three natural extensions of the work presented here. It is of interest to investigate RAID in a setting with non-differentiable pseudo-gradient mappings, where strong monotonicity might be enforced via generalized gradients or a variational-inequality formulation. This introduces new questions on the existence and uniqueness of Nash equilibria, as well as the regularity conditions that are necessary for the estimation and regret analysis. Another possible extension may consider games with multiple or no nominal equilibria. In this setting, the RAID problem would further require inducing a unique well-defined equilibrium behavior via incentive design. Future work can also consider adaptive incentive design for networks of agents, as considered in multi-agent systems~\cite{zhang18intrinsictetrahedron, zhang20anintrinsicapproach,zhang21modelingcollective}.

\appendix
\section{Proof of Main Theorems}
\label{app:proofs_main}

This appendix contains the proofs for Sec.~\ref{sec:measurement_noise} and~\ref{sec:endogenous_noise}.
 
\subsection{Proof of Lemma \ref{thm:sufficient_strong_consistency}}
\begin{proof}
     We write~\eqref{eq:model} component-wise as
     \[\hat p_i(t) = -\theta_i^{*\top}\xi(t) + e_i(t),\]
     where $\xi(t) = \Phi(x(t))$. By Assumption \ref{ass:independet_noise}, each $\{e_i(t)\}_{t=1}^\infty$ is an i.i.d., zero-mean noise process with $\sup_t \mathbb E|e_i(t)|^a < \infty$ for $a>2$.
     Estimator~\eqref{eq:solution_estimator} can be written agent-wise as
     \[\begin{aligned}
         \hat \theta_i(t) & = \operatorname*{arg\,min}_{\theta_i \in {\mathbb{R}}^D} \sum_{\tau=1}^t (\hat p_i(\tau) + \theta_i^\top \xi(\tau))^2\delta(\tau) \\
         & = - \Big(\sum_{\tau=1}^t \delta(\tau)\hat p_i(\tau)\xi(\tau)\Big)\Big(\sum_{\tau=1}^t \delta(\tau)\xi(\tau)\xi(\tau)^\top\Big)^{-1},
     \end{aligned}\]
    where $\delta(t) = \mathbf 1\{ \hat x(t) \in \operatorname{int} {\mathcal{X}}\}$.
    We denote by $\lambda_{\min}(t)$ and $\lambda_{\max}(t)$ the minimum and maximum eigenvalue, respectively, of $S_t^{-1}$, given in~\eqref{eq:Sigma_inverse_rank_one_update}.
    
    By the compactness of $\mathcal{X}$, it holds that $\lambda_{\max}(t)$ grows at most linearly with $t$, since
    \[\lambda_{\max}(t) \leq t \max_{x \in \mathcal{X}} \| \Phi(x)\|_2^2 = O(t).\]
    Consequently, when $\log t = o(\lambda_{\min}(t))$, it follows that $\log \lambda_{\max}(t) =o (\lambda_{\min}(t))$. Applying~\cite[Theorem 1]{laiLeastSquaresEstimates1982} and $\|\hat \theta_i(t) - \theta_i^*\|_2 = O(\|\hat \theta_i(t) - \theta_i^*\|_\infty)$, we conclude
    \[\|\hat \theta_i (t) -  \theta_i^*\|_2^2 = O \Big(\frac{\log(\lambda_{\max}(t))}{\lambda_{\min}(t)}\Big) = O \Big(\frac{\log t}{\lambda_{\min}(t)}\Big) \text{ a.s.}.\]
    Finally, the estimator $\hat \Theta(t)$ in \eqref{eq:solution_estimator} satisfies
    \[\|\hat \Theta(t) - \Theta^*\|_F^2 = \sum_{i=1}^n \|\hat \theta_i(t)-\theta_i^*\|_2^2 = O \Big(\frac{\log t}{\lambda_{\min}(t)}\Big) \text{ a.s.}\]
\end{proof}

\subsection{Proof of Theorem \ref{thm:information_growth}}
\begin{proof}
    Since $\mathcal{X}$ is compact, $\|\xi(t)\|_2$ is uniformly bounded and $\mathbb E \|\xi(t)\|_2 < \infty$.  
    Define $M=\mathbb E[\xi(t) \xi(t)^\top \delta(t)]$, where $M \succ 0$ due to Lemma \ref{lemma:excitation}. 

    By $\eqref{eq:Sigma_inverse_rank_one_update}$, we have 
    \(S_t^{-1} = \sum_{\tau=1}^t \delta(\tau)\xi(\tau)\xi(\tau)^\top,\)
    and applying the strong law of large numbers
    \[\begin{aligned}
        t^{-1}S_t^{-1} = t^{-1} \sum_{\tau = 1}^t \delta(\tau)\xi(\tau) \xi(\tau)^\top  \overset{a.s.}{\longrightarrow} M.
    \end{aligned} \]
    By Weyl's inequality for symmetric matrices,
    \[|t^{-1} \lambda_{\min}(S_t^{-1}) - \lambda_{\min}(M) | \leq \|t^{-1}S_t^{-1}- M\|_2,\]
    which in turn implies
    $t^{-1}\lambda_{\min}(S_t^{-1}) \overset{a.s.}{\longrightarrow} \lambda_{\min}(M)$, and hence, $\lambda_{\min}(t) = \lambda_{\min}(S_t^{-1}) = \Theta(t)$ a.s. 
\end{proof}

\subsection{Proof of Lemma \ref{lemma:number_of_exploration_samples} and Theorem \ref{thm:algorithm_regret}}
Fix a switching schedule $\{\tau_k\}_{k}$, $\{\sigma_k\}_{k}$ according to \eqref{eq:switches}.
Let $ K_{t}= \sup\{k : \tau_k \leq t\}$ and define $\#(t)$ to be the total number of exploration samples up to time $t$, 
that is,
\begin{equation}
\label{eq:hashtag}
\#(t) = \sum_{k=1}^{K_{t}-1} (\tau_{k+1} - \sigma_{k}) + \max\{0, t - \sigma_{K_{t}}\}.
\end{equation}
We preface the proof of Lemma \ref{lemma:number_of_exploration_samples} and Theorem \ref{thm:algorithm_regret} with the following lemma.

\begin{lemma}
\label{lemma:eigenvalue_threshold}
Suppose Assumptions \ref{ass:types} and \ref{ass:independet_noise} hold. 
Let 
\[t_0 = \inf\{t>0: S_t^{-1} \succ 0\}; \text{ with }\inf \emptyset = \infty.\]
Under Algorithm \ref{alg:one}, if $t_0 < \infty$, there exists a $k_0 < \infty$ such that, for all $k \geq k_0$:
\begin{enumerate}
    \item When $t \in [\tau_k, \sigma_k)$, it holds that $\lambda_{\min}(t) \geq A(t)$, 
    \item When $t \in [\sigma_k, \tau_{k+1})$, it holds that $\lambda_{\min}(t) < D A(t)$, 
\end{enumerate}
where $\lambda_{\min}(t) = \lambda_{\min}(S_t^{-1})$, $D$ is the size the feature map $\Phi(\cdot)$, and $A(t)$ is the threshold sequence in the algorithm.
\end{lemma}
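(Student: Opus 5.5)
The proof rests on a simple observation. The switching rule in Algorithm~\ref{alg:one} compares $\operatorname{tr}(S_t)$ with $A(t)^{-1}$, and we have the sandwich
\[
\frac{1}{\lambda_{\min}(t)} \;\le\; \operatorname{tr}(S_t) \;\le\; \frac{D}{\lambda_{\min}(t)},
\]
valid whenever $S_t^{-1}\succ 0$. Since $t_0<\infty$ and $S_t^{-1}$ is nondecreasing in the Loewner order by \eqref{eq:Sigma_inverse_rank_one_update}, we have $S_t^{-1}\succ 0$ for all $t\ge t_0$, so the sandwich holds on that range. I would choose $k_0$ so that $\tau_{k_0}\ge t_0$, and also large enough that $A(t)>0$ (that is, $t\ge 2$). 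If the schedule has only finitely many switches, the statement concerns only the last phase, and the same argument covers it.

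\emph{Exploitation phases.} Take $t\in[\tau_k,\sigma_k)$ with $k\ge k_0$. By the definition of $\sigma_k$ in \eqref{eq:switches}, $t<\sigma_k$ means $\operatorname{tr}(S_t)\le A(t)^{-1}$. I must check that $\tau_k$ itself is included. By definition $\tau_k$ satisfies $\operatorname{tr}(S_{\tau_k})\le A(\tau_k)^{-1}$, so the infimum defining $\sigma_k$ is not attained at $\tau_k$, and the same holds for every later $t$ before $\sigma_k$. Combining with the lower half of the sandwich gives
\[
\frac{1}{\lambda_{\min}(t)} \le \operatorname{tr}(S_t)\le A(t)^{-1},
\]
hence $\lambda_{\min}(t)\ge A(t)$.

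\emph{Exploration phases.} Take $t\in[\sigma_k,\tau_{k+1})$. By the definition of $\tau_{k+1}$, $t<\tau_{k+1}$ means $\operatorname{tr}(S_t)>A(t)^{-1}$; at $t=\sigma_k$ this is exactly the defining property of $\sigma_k$. Combining with the upper half of the sandwich gives
\[
A(t)^{-1}<\operatorname{tr}(S_t)\le \frac{D}{\lambda_{\min}(t)},
\]
hence $\lambda_{\min}(t)<D\,A(t)$.

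The main subtlety is an indexing mismatch. Algorithm~\ref{alg:one} decides the phase at step $\tau$ from $\operatorname{tr}(S_{\tau-1})$ compared with $A(\tau-1)$, whereas \eqref{eq:switches} uses time $t$. I would adopt \eqref{eq:switches} as the defining schedule, since that is the form in which the lemma is stated, and note that the one-step lag changes nothing in the inequalities above. If one insists on the algorithm's indexing, the bounds become statements about $\lambda_{\min}(t-1)$ versus $A(t-1)$. These transfer to time $t$ because in exploitation $S_t=S_{t-1}$ and $A$ is increasing, while in exploration $\lambda_{\min}$ changes by at most $\sup_x\|\Phi(x)\|_2^2$ per step and $A(t)/A(t-1)\to 1$. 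At worst the constant $D$ enlarges by a factor arbitrarily close to $1$, which is harmless for later use.

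The remaining point is that the sandwich needs $S_t\succ 0$ as a genuine inverse. If $S_1$ is initialized positive definite, $t_0=1$ and $S_t^{-1}\succeq S_1^{-1}\succ 0$ throughout. Otherwise $t_0<\infty$ is assumed, and monotonicity handles all $t\ge t_0$.
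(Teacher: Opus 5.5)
Your argument is correct and is essentially the paper's proof. You use the sandwich $1/\lambda_{\min}(t)\le \operatorname{tr}(S_t)\le D/\lambda_{\min}(t)$ for $t\ge t_0$, which follows from $S_t^{-1}\succeq S_{t_0}^{-1}\succ 0$, together with the choice $k_0=\inf\{k:\tau_k\ge t_0\}$, and then read the two inequalities directly off the switching rule \eqref{eq:switches}.

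One correction to your optional aside on the algorithm's one-step lag. In exploitation you only get $\lambda_{\min}(t)=\lambda_{\min}(t-1)\ge A(t-1)$, and there monotonicity of $A$ works against you; it is $A(t-1)/A(t)\to 1$ that rescues a constant slightly below $1$. In exploration, by contrast, it is monotonicity of $A$ together with $A(t)\to\infty$ that absorbs the additive $\sup_x\|\Phi(x)\|_2^2$ term.
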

\begin{proof}
    Since $S_t^{-1} \succeq S_{t_0}^{-1}\succ 0$, it holds that for all $t \geq t_0$,
    \(\lambda_{\max}(S_t) = 1/\lambda_{\min}(S_t^{-1}) = 1/\lambda_{\min}(t).\)
    Moreover, it also holds that
    \(\lambda_{\max}(S_t) \leq \sum_{i=1}^D \lambda_i(S_t) \leq D \lambda_{\max}(S_t).\)
    Hence
    \(1/\lambda_{\min}(t) \leq \operatorname{tr}(S_t) \leq D/\lambda_{\min}(t)\), for all $t \geq t_0.$

    Let $k_0 = \inf\{k: \tau_k \geq t_0\}<\infty$. 
    For each $k\geq k_0$, when $t \in [\tau_k, \sigma_k)$, we have $\operatorname{tr}(S_t) \leq A(t)^{-1}$, which implies $\lambda_{\min}(t) \geq A(t)$. When $t \in [\sigma_k, \tau_{k+1})$, we have that $\operatorname{tr}(S_t) > A(t)^{-1}$, which implies $\lambda_{\min}(t) < D A(t)$.
\end{proof}

\begin{proof}[\textbf{Proof of Lemma \ref{lemma:number_of_exploration_samples}}]

    Notice that $ \#(t) \to \infty$ a.s., otherwise, there exists some $t_0$ such that $\operatorname{tr}(S_{t_0}) \leq A(t)^{-1}$ for all $t\geq t_0$, which contradicts the monotonicity of $A(t)$.

    Let $I(t)$ denote those times $t$ that belong to exploration phases, so $|I(t)| = \#(t)$. 
    The subsequence of incentives during exploration phases is $\{p(\tau)\}_{\tau \in {I}(t)} = \{\varepsilon(\tau)\}_{\tau \in I(t)}$, where $\{\varepsilon(\tau)\}_{\tau \in I(t)}$ are i.i.d. Gaussian. 
    To see this, we may consider an auxiliary i.i.d. sequence $\{\mu(k)\}_{k= 1}^\infty$, where  $\mu(k) \sim \mathcal{N}(0,\sigma^2_p\ \mathbb I)$. Then, for each $\tau \in \mathcal I(t)$, sampling probing noise $\varepsilon(\tau)$ is equivalent to taking the noise from $\{\mu(k)\}_k$ with $\varepsilon({\tau})=\mu(\#(\tau))$. In this view, noise $\{\mu(k)\}_k$ is the uninterrupted probing sequence, and $\{\varepsilon(\tau)\}_{\tau \in I(t)}=\{\mu(\#(\tau))\}_{\tau \in \mathcal{I}(t)} = \{\mu(k)\}_{k=1}^{\#(t)}$.
    
    By Theorem \ref{thm:information_growth}, there exists a constant $c>0$ such that 
    \[\lim_{t\rightarrow \infty} \frac{1}{\#(t)}\lambda_{\min} \Big(\sum_{\tau \in \mathcal{I}(t)}\xi(\tau)\xi(\tau)^\top \delta(\tau) \Big) = c,  \text{ a.s.}\]
    Since $S_t^{-1}$ is only updated in Algorithm \ref{alg:one} at exploratory iterations $\tau \in \mathcal{I}(t)$, we have
   $ \sum_{\tau \in \mathcal{I}(t)} \xi(\tau)\xi(\tau)^\top \delta(t) = S_t^{-1}$.
    Therefore, for any $\varepsilon \in ( 0, c)$, there exists $t_\varepsilon>0$ such that
    \begin{equation}
    \label{eq:thm3_proof_1}
           0 < (c- \varepsilon) \#(t) \leq \lambda_{\min}(t)  \leq (c+\varepsilon) \#(t), \ \forall t\geq t_\varepsilon.
    \end{equation}
    Denote $c^- = c-\varepsilon$ and $c^+ = c+\varepsilon$.

    Let $B = \max_{x\in \mathcal{X}} \| \Phi(x)\|^2_2$.
    For all $t$, observe that
    \begin{equation}
    \label{eq:thm3_proof_2}
    \begin{aligned}
        \lambda_{\min}(t+1) & \leq \lambda_{\min}(t) + \lambda_{\max}\left(\xi(t)\xi(t)^\top\right) \\ & \leq  \lambda_{\min}(t) +B.
    \end{aligned}
    \end{equation}
    
    Finally, by Lemma \ref{lemma:eigenvalue_threshold}, there is a $k_\varepsilon$ such that for all $k\geq k_\varepsilon$:
    \begin{equation}
    \label{eq:thm3_proof_3}
    \begin{aligned}
        &  \lambda_{\min}(t) \geq  A(t),\  \forall t \in [ \tau_k, \sigma_k ), \\ 
        & \lambda_{\min}(t) < D A(t), \ \forall t\in [ \sigma_k, \tau_{k+1}). 
    \end{aligned}
    \end{equation}

    We can show that when $t \geq \max\{t_\varepsilon, \tau_{k_\varepsilon}\}$, \eqref{eq:thm3_proof_2}-\eqref{eq:thm3_proof_3} ensure that $\lambda_{\min}(t) = O(A( t))$. This is because
    \begin{enumerate}
        \item If $\sigma_{K_t} \leq t <  \tau_{K_t+1}$ then \(\lambda_{\min}(t) \leq  DA(t)\), by \eqref{eq:thm3_proof_3}.
        \item If $t = \tau_{K_t}$, by \eqref{eq:thm3_proof_2} and \eqref{eq:thm3_proof_3}, 
        \[\lambda_{\min}(t) \leq \lambda_{\min}(t-1) + B  \leq DA(t-1) + B. \]
        \item If $\tau_{K_t} < t < \sigma_{K_t}$, by \eqref{eq:thm3_proof_2}, 
        \[\begin{aligned}
            \lambda_{\min}(t) = \lambda_{\min}(\tau_{K_t}) & \leq DA(\tau_{K_t}-1) + B  \\&\leq DA(t-1) + B.
        \end{aligned}\]
    \end{enumerate}
    Moreover, \eqref{eq:thm3_proof_1}-\eqref{eq:thm3_proof_3} ensure $\lambda_{\min}(t) = \Omega(A( t))$ a.s. 
    \begin{enumerate}
        \item If $\tau_{K_t} \leq t < \sigma_{K_t}$, then \(\lambda_{\min}(t) \geq  A(t)\) by \eqref{eq:thm3_proof_3}, 
        \item If $\sigma_{K_t} \leq t < \tau_{K_t+1} $, by \eqref{eq:thm3_proof_1}, with probability one,
        \[\begin{aligned}
            \lambda_{\min}(t) & \geq c^- \left( \#(\sigma_{K_t}-1) + t-\sigma_{K_t}+1 \right) \\
            & \geq \frac{c^-}{c^+}\left( \lambda(\sigma_{K_t}-1) + c^+(t-\sigma_{K_t}+1) \right) \\
            & \geq \frac{c^-}{c^+}\left( A(\sigma_{K_t}-1) + c^+(t-\sigma_{K_t}+1) \right).
        \end{aligned} \]

        Observe that for any $\gamma < 1$, $t-A(t)$ is increasing in $t$, and therefore $t-\sigma_{K_t} + 1 \geq A(t) - A(\sigma_{K_t}-1)$.
        Substituting in the above, we have
        \begin{equation}
            \label{eq:thm3_proof_4}
            \begin{aligned}
                \frac{\lambda_{\min}(t)}{A(t)} & \geq \frac{c^-}{c^+} (c^+ + (1-c^+)\frac{A(\sigma_{K_t}-1)}{A(t)}) \\
                &\overset{(a)}{\geq}\min\{c^-, \frac{c^-}{c^+}\},
            \end{aligned}
        \end{equation}
        where $(a)$ holds since $0 \leq  \frac{A(\sigma_{K_t})}{A(t)} \leq 1$, and the affine function attains its minimum at either endpoint.
        Hence $\lambda_{\min}(t) = \Omega(A(t))$ a.s.
    \end{enumerate}
     
    We conclude that $\lambda_{\min}(t) = \Theta(A(t))$ a.s., and the fact that $\#(t) =\Theta(A(t))$ a.s. follows from \eqref{eq:thm3_proof_1}.
\end{proof}

\begin{proof}[\textbf{Proof of Theorem \ref{thm:algorithm_regret}}]
    Utilizing Lemma \ref{lemma:number_of_exploration_samples}, we have shown $\lambda_{\min}(t) = \Theta(A(t))$ a.s. By Lemma \ref{thm:sufficient_strong_consistency} this implies,
    \[\|\hat \Theta(t) - \Theta^*\|_F^2 = O(\frac{\log t}{\lambda_{\min}(t)})= O(\frac{\log t}{A(t)}) = O(t^{-\gamma}) \text{ a.s.}\]
    Due to Assumption~\ref{ass:feasibility}, the system planner's regret $\mathcal R_t$ defined in \eqref{eq:regret_definition} satisfies
    \begin{equation}
    \label{eq:thm_proof_temp1}
    \begin{aligned}
        & \mathcal R_t  \leq  {L_\Psi}^2 \sum_{\ell=1}^t \left( \|\hat x(\ell)- x^\dagger\|_2^2 + \|u(\ell)- u^\dagger\|_2^2 \right).
    \end{aligned}
    \end{equation}
    Substituting~\eqref{eq:optimal_policy} into $u(\ell) -u^\dagger$, one observes that
    \[\begin{aligned}
        u(\ell) -u^\dagger & = \operatorname{diag}(p(\ell)) (\hat x(\ell)-x^\dagger) \\
        & = (\operatorname{diag}(p^*)  + \operatorname{diag}(p(\ell)-p^*)) (\hat x(\ell)-x^\dagger),
    \end{aligned}\]
    where $\operatorname{diag}(p(\ell))$ (resp., $\operatorname{diag}(p^*)$) is the diagonal matrix whose elements are the elements of $p(\ell)$ (resp., $p^*$).
    Hence
    \begin{equation}
    \label{eq:thm_proof_temp2}
    \|u(\ell)-u^\dagger\|_2^2 \leq 2(\|p^*\|_\infty^2 + \|p(\ell) -p^*\|_2^2) \|\hat x(\ell)-x^\dagger\|_2^2.
    \end{equation}
    Substituting~\eqref{eq:thm_proof_temp2} into~\eqref{eq:thm_proof_temp1}, we obtain
    \begin{equation}
    \label{eq:thm_proof_temp3}
    \mathcal R_t  \!\leq \!  {L_\Psi}^2 \!\sum_{\ell=1}^t \!\left( 1 \!+ 2\|p^*\|_\infty^2\! + 2\|p(\ell)\!-\! p^*\|_2^2 \right)\|\hat x(\ell)\!-\! x^\dagger\|_2^2.
    \end{equation}
    We consider the RHS of~\eqref{eq:thm_proof_temp3} in each of the exploitation and exploration phases of Algorithm~\ref{alg:one}.

    \textit{$(i)$ Over exploitation intervals.}
    By Assumption \ref{ass:feasibility},  $p^* \in \mathcal{P}$ open, so there is an open ball $B(p^*, \epsilon) \subset \mathcal{P}$. Consequently, when $\hat \Theta(\ell) \to \Theta^*$, there is an $\ell_0$ such that $p(\ell) \in B(p^*, \epsilon)$ for all $\ell \geq \ell_0$. Hence
    \[\begin{aligned}
        \|\hat x(\ell)- x^\dagger\|^2_2 & \leq \|H(p(\ell))- H(p^*)\|^2_2 \overset{(a)}{\leq } h_2^2 \|p(\ell)-p^*\|_2^2,
    \end{aligned}\]
    where $(a)$ holds by Lemma \ref{lemma:NE_bijection_mapping}.
    Additionally, in each iteration it holds that $p(\ell) = - \hat \Theta(\ell-1) \Phi(x^\dagger)$, so 
    \[\begin{aligned}
        \|\hat x(\ell)- x^\dagger\|^2_2 & \leq  h_2^2 \|\Phi(x^\dagger)\|_2^2 \| \hat \Theta(\ell-1)- \Theta^*\|_F^2.
    \end{aligned}\]
    Over exploitation intervals,~\eqref{eq:thm_proof_temp3} implies
    \begin{align}
        \mathcal{R}_t^{\textrm{exploit}} \!& = O\Big( \sum_{\!\ell=\ell_0-1}^t\! ( \| \hat \Theta(\ell)- \Theta^*\|_F^2 + \| \hat \Theta(\ell)- \Theta^*\|_F^4)\Big) \notag \\
        & \overset{(a)}{=}\! O\Big(\sum_{\!\ell = \ell_0-1}^t \!\!\ell^{-\gamma}\Big) + O\Big(\sum_{\!\ell = \ell_0-1}^t \!\!\ell^{-2\gamma}\Big) \text{ a.s.,} \label{eq:thm_proof_temp4}
    \end{align}
    where $(a)$ follows from $ \| \hat \Theta(\ell)- \Theta^*\|_F^2 = O(\ell^{-\gamma})$ a.s. For $\gamma\in[0.5,1 )$,~\eqref{eq:thm_proof_temp4} implies that $\mathcal{R}_t^{\textrm{exploit}} = O(t^{1-\gamma})$ a.s.

    \textit{$(ii)$ Over exploration intervals.}
    ${\mathcal{X}}$ is compact so $\|\hat x(\ell)-x^\dagger\|_2^2 \leq \max_{x\in {\mathcal{X}}}\|x-x^\dagger\|_2^2$. Then~\eqref{eq:thm_proof_temp3} implies
    \[\mathcal{R}_t^\textrm{explore} = O(\#(t)) + O\Big(\sum_{\ell=1}^{\#(t)} \|p(\ell)-p^*\|_2^2\Big),\]
    where $\#(t)$ is the total number of exploration steps up to time $t$, given in~\eqref{eq:hashtag}. Lemma~\ref{lemma:number_of_exploration_samples} establishes that $\#(t) = \Theta(t^\gamma \log t)$ a.s. for $\gamma \in [0.5,1)$. 
    We only need then consider the term $\sum_{\ell=1}^{\#(t)} \|p(\ell)-p^*\|_2^2$ when $p(\ell) \sim \mathcal{N}(0,\sigma_p^2 \mathbb I)$. Observe that
    \(\mathbb{E}\|p(\ell)-p^*\|_2^2 = \|p^*\|_2^2 + \sigma_p^2 n < \infty,\)
    and the random variable has a finite second moment. By the strong law of large numbers,
    \[\frac{1}{\#(t)}\sum_{\ell=1}^{\#(t)}\|p(\ell)-p^*\|_2^2 \to \|p^*\|_2^2 + \sigma_p^2 n, \]
    so the term is also $O(\#(t))$ a.s. Conclude that $\mathcal{R}_t^\textrm{explore} = O(t^\gamma \log t)$ a.s.

    \textit{$(iii)$ Over all iterations.}
    By the previous observations,
    \[\begin{aligned}
        \mathcal{R}_t & \leq \mathcal{R}_t^\textrm{explore} + \mathcal{R}_t^\textrm{exploit} = {O}(t^\gamma \log t + t^{1-\gamma}) \text{ a.s.}
    \end{aligned}\]
    For $\gamma \in [\frac{1}{2}, 1)$, this gives $\mathcal{R}_t = O(t^\gamma \log t)$ a.s.
\end{proof}

\subsection{Proof of Theorem \ref{thm:eiv_sufficient_strong_consistency}}
\begin{proof}
    Whenever $\delta (t) =1$, it follows that both observations $\hat x^{1}(t), \hat x^{2}(t) \in \operatorname{int}{\mathcal{X}}$. Therefore, by~\eqref{eq:model_eiv_2},
    \[\hat p^{1}(t) = - \Theta^* \xi^{1}(t) + e^{1}(t) - v^{1}(t).\]
    Multiplying by $\delta(t)\xi^{2}(t)^\top$ and summing over all exploration periods, we obtain
    \[X_t = -\Theta^* U_t -V_t,\]
    where $V_t = \sum_{\tau=1}^t \delta(\tau) s^{1}(\tau) \xi^{2}(\tau)^{\top}$. 
    By Lemma~\ref{lemma:eiv_excitation}, there exists some $t_0$ such that $U_t$ is nonsingular for all $t \geq t_0$, so
    it follows from~\eqref{eq:eiv_estimator} that
    \(\tilde \Theta(t) - \Theta^* = V_t U_t^+, \)
    and
    \[\|\tilde \Theta(t) - \Theta^*\|_2^2 = \lambda_{\max} \left( V_t (U_t^\top U_t)^+ V_t^\top\right).\]
   On the event $\{\log t = o (\lambda_{\min}(B_t))\}$ there is some $t_1$ such that $B_t \succeq B_{t_1} \succ 0$, for all $t \geq t_1$, and hence
    \[\begin{aligned}
        \|\tilde \Theta(t) - \Theta^*\|_2^2 & = \lambda_{\max} \left( V_t B_t^{-\frac{1}{2}}B_t^{\frac{1}{2}} (U_t^\top U_t)^+B_t^{\frac{1}{2}} B_t^{-\frac{1}{2}}V_t^\top\right) \\
        & \leq \lambda_{\max}(B_t^{\frac{1}{2}} (U_t^\top U_t)^+B_t^{\frac{1}{2}}) \ \lambda_{\max}(V_t B_t^{-1}V_t^\top) \\
        & \overset{(a)}{=} \frac{\lambda_{\max}(V_t B_t^{-1}V_t^\top)}{\lambda_{\min}^+(B_t^{-\frac{1}{2}} U_t^\top U_t B_t^{-\frac{1}{2}})},
    \end{aligned}\]
    where $(a)$ holds by the fact $\lambda_{\max}(A) = (\lambda_{\min}(A^+))^{-1}$ for $A =A^\top \succeq 0$.
    It remains to show that $\lambda_{\max}(V_t B_t^{-1}V_t^\top) = O(\log t)$ a.s.
    Observe
    \[\lambda_{\max}(V_t B_t^{-1}V_t^\top) \leq \operatorname{tr}(V_t B_t^{-1}V_t^\top) = \sum_{i=1}^n v_{i,t} B_t^{-1}v_{i,t}^\top, \]
    where $v_{i,t} = \sum_{\tau=1}^t \delta(\tau) s_i^{1}(\tau) \xi^{2}(\tau)^{\top}$, represents the $i$-th row of $V_t$ and $s^{1}(\tau) = (s_1^{1}(\tau, \ldots, s_n^{1}(\tau))^\top$.
    Each of the processes $\{s_i^{1}(\tau)\}_{\tau=1}^\infty$, is i.i.d., zero-mean and satisfies $\sup_t \mathbb E (|s_i^{1}(t)|^a) < \infty$ for some $a > 2$. 
    Moreover, if we define the filtration $\mathcal{F}_t = \sigma\{v^{k}(\tau), e^{k}(\tau): \tau \leq t, k\in [3]\}$, then 
    \(\mathbb E[\delta(t) s_i^{1}(t) \xi^{2}(t) \mid \mathcal{F}_{t-1}] =0,\)
    since $\delta(t)\xi^{2}(t)$ depends on  $\{p(t), v^{2}(t), \hat v^{3}(t)\}$, and $s^{1}(t)$ is zero-mean and independent to $v^{2}(t)$, and $v^{3}(t)$.
    By~\cite[Lemma 1.(iii)]{laiLeastSquaresEstimates1982}, on the event that $\lambda_{\min}(B_t) \to \infty$, we have $v_{i,t} B_t^{-1}v_{i,t}^\top = O (\log \lambda_{\max}(B_t))$ a.s. 
     By the boundedness of $\mathcal{X}$,
    \[\lambda_{\max}(B_t) \leq t \max_{x \in \mathcal{X}} \| \Phi(x)\|_2^2 = O(t),\]
    and the result follows.
\end{proof}

\subsection{Proof of Theorem \ref{thm:eiv_algorithm_regret}}
Consider some switching schedule $\{\tau_k\}_{k= 1}^\infty$, $\{\sigma_k\}_{k=1}^\infty$ according to \eqref{eq:eiv_switches}, and 
let $ K_{t}= \sup\{k : \tau_k \leq t\}$. Further, define $\#(t)$ to be the total number of exploration samples up to iteration $t$ 
as in~\eqref{eq:hashtag}.
We preface the proof of Theorem \ref{thm:eiv_algorithm_regret} with the following lemmas.

\begin{lemma}
\label{lemma:eiv_properties}
    Let Assumptions \ref{ass:types} and \ref{ass:eiv_noise} hold, and define the matrices $B_t$ and $U_t$ as in Theorem \ref{thm:eiv_sufficient_strong_consistency}. Let
    \[t_0 = \inf\{t>0: B_t \succ 0\}; \inf \emptyset = \infty,\] 
    and $L = \max_{x\in {\mathcal{X}}} \|\Phi(x)\|_2$.
    If $t_0 < \infty$, then for all $t \geq t_0$,
    \[\lambda_{\min}(B_{t+1}^{-\frac{1}{2}}U_{t+1}^\top U_{t+1} B_{t+1}^{-\frac{1}{2}}) \leq \lambda_{\min}(B_{t}^{-\frac{1}{2}}U_{t}^\top U_{t} B_{t}^{-\frac{1}{2}}) + L^2.\]
\end{lemma}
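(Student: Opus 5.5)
The plan is to rewrite the minimum eigenvalue as a generalized Rayleigh quotient and then test the updated quotient against the previous minimizer. Write $M_t = B_t^{-\frac12}U_t^\top U_t B_t^{-\frac12}$. For $t\ge t_0$ we have $B_{t+1}\succeq B_t \succeq B_{t_0}\succ 0$, so both $M_t$ and $M_{t+1}$ are well defined. Substituting $z=B_t^{\frac12}w$ in the Rayleigh quotient of $M_t$ gives
\[
\lambda_{\min}(M_t)=\min_{w\neq 0}\frac{\|U_t w\|_2^2}{w^\top B_t w}=:\min_{w\neq 0} R_t(w),
\]
and likewise for $M_{t+1}$. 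It therefore suffices to exhibit one $w$ with $R_{t+1}(w)\le \lambda_{\min}(M_t)+L^2$.

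First I dispose of the trivial case. If step $t+1$ is an exploitation step, or if $\delta(t+1)=0$, then $U_{t+1}=U_t$ and $B_{t+1}=B_t$, and the claim is immediate. Otherwise, set $a=\xi^{1}(t+1)$ and $b=\xi^{2}(t+1)$. Then
\[
U_{t+1}=U_t+ab^\top,\qquad B_{t+1}=B_t+bb^\top,
\]
and $\|a\|_2\le L$ because $\hat x^{1}(t+1)\in\mathcal X$, which was shown right after \eqref{eq:eiv_estimator}.

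Next I take $w$ to be a minimizer of $R_t$, normalized so that $w^\top B_t w=1$; then $\|U_tw\|_2^2=\lambda:=\lambda_{\min}(M_t)$. Put $s=b^\top w$. The denominator becomes $w^\top B_{t+1}w=1+s^2$. Expanding the numerator and applying Cauchy--Schwarz to the cross term $a^\top U_t w$ gives
\[
\|U_tw+a s\|_2^2\le \lambda+2|s|L\sqrt{\lambda}+L^2s^2=(\sqrt{\lambda}+L|s|)^2 .
\]

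The only real step is to compare this bound with the new denominator; note that restricting to $w\perp b$ would not work, since by Courant--Fischer that only controls the second eigenvalue. Instead I apply Cauchy--Schwarz once more, to the vectors $(\sqrt\lambda,L)$ and $(1,|s|)$:
\[
(\sqrt{\lambda}\cdot 1+L\cdot|s|)^2\le(\lambda+L^2)(1+s^2).
\]
Hence $R_{t+1}(w)\le\lambda+L^2$, and therefore $\lambda_{\min}(M_{t+1})\le\lambda_{\min}(M_t)+L^2$, which is the statement of the lemma.
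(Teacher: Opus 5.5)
Your proof is correct and is essentially the paper's argument: both test the generalized Rayleigh quotient $\|U_{t+1}w\|_2^2/(w^\top B_{t+1}w)$ at the previous minimizer normalized by $w^\top B_t w=1$. Your two-dimensional Cauchy--Schwarz step $(\sqrt{\lambda}+L|s|)^2\le(\lambda+L^2)(1+s^2)$ is exactly the paper's AM--GM bound $2|s|L\sqrt{\lambda}\le s^2\lambda+L^2$. One small remark: $\|\xi^{1}(t+1)\|_2\le L$ holds simply because every response is a strategy profile in $\mathcal X$. So you need not cite the argument after \eqref{eq:eiv_estimator}, which relies on Assumption~\ref{ass:exploration_margin}, an assumption that is not among the lemma's hypotheses.
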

\begin{proof}
    When $\delta(t)=0$, 
    the inequality holds trivially.
    When $\delta(t)=1$, we have
    \begin{align}
        \lambda_{\min}(B_{t}^{-\frac{1}{2}}U_{t}^\top U_{t} B_{t}^{-\frac{1}{2}}) 
        & = \min_{y \neq 0}\frac{y^\top U_{t}^\top U_{t} y}{y^\top B_t y}, \notag \\
        &  \leq \frac{y_{t-1}^\top U_{t}^\top U_{t} y_{t-1}}{y_{t-1}^\top B_t y_{t-1}},
        \label{eq:eiv_properties_temp1}
    \end{align}
    where $y_{t-1}$ is the eigenvector associated with $\lambda_{\min}(\cdot)$ at iteration $t-1$, that is,
    \begin{equation*}
        y_{t-1}\! = \!\operatorname*{arg\,min}_{y\neq 0}\frac{y^\top U_{t-1}^\top U_{t-1} y}{y^\top B_{t-1} y}\! =\! \operatorname*{arg\,min}_{y^\top B_{t-1} y = 1}{y^\top U_{t-1}^\top U_{t-1} y}.
    \end{equation*}
    Observe that when $\delta(t)=1$,
    \begin{align}
        &B_{t}  = B_{t-1} + \xi^{2}(t)\xi^{2}(t)^{\top}, \label{eq:eiv_properties_temp2} \\
        & U_{t}^\top U_{t} = U_{t-1}^\top U_{t-1}   \!+\! \xi^{2}(t)\xi^{2}(t)^{\top}U_{t-1}
        \notag \\
        & \quad +  U_{t-1}^\top\xi^{1}(t)\xi^{2}(t)^{\top} + \|\xi^{1}(t)\|_2^2 \xi^{1}(t)\xi^{2}(t)^{\top}.     \label{eq:eiv_properties_temp3}
    \end{align}
    Substituting \eqref{eq:eiv_properties_temp2} into the denominator of \eqref{eq:eiv_properties_temp1},
    \begin{equation}
    \label{eq:eiv_properties_temp4}
        \begin{aligned}
        y_{t-1}^\top B_ty_{t-1} & = y_{t-1}^\top B_{t-1}y_{t-1} + (y_{t-1}^\top\xi^{2}(t))^2 \\
        &= 1 +(y_{t-1}^\top\xi^{2}(t))^2.
    \end{aligned} 
    \end{equation}
    Substituting  \eqref{eq:eiv_properties_temp3} in the numerator of \eqref{eq:eiv_properties_temp1},
    \begin{equation}
    \label{eq:eiv_properties_temp5}
    y_{t-1}^\top U_{t-1}^\top U_{t-1} y_{t-1} \!=\! \lambda_{\min}\!(B_{t-1}^{-\frac{1}{2}}U_{t-1}^\top U_{t-1} B_{t-1}^{-\frac{1}{2}}),
    \end{equation}
    and
    \begin{equation}
    \label{eq:eiv_properties_temp6}
    \begin{aligned}
        &y_{t-1}^\top \xi^{2}(t)\xi^{1}(t)^{\top} U_{t-1}y_{t-1} \\&
        \leq |y_{t-1}^\top \xi^{2}(t)|\,  \|\xi^{1}(t)\|_2 \| U_{t-1}y_{t-1}\|_2 .
    \end{aligned}
    \end{equation}
    Let $L = \max_{x\in {\mathcal{X}}}\|\Phi(x)\|_2$. Denote $a = y_{t-1}^\top \xi^{2}(t)$  and $\lambda(t) = \lambda_{\min}(B_{t}^{-\frac{1}{2}}U_{t}^\top U_{t} B_{t}^{-\frac{1}{2}})$. By~\eqref{eq:eiv_properties_temp4},\eqref{eq:eiv_properties_temp5}, and~\eqref{eq:eiv_properties_temp6},
    \[\lambda(t) \leq \frac{\lambda(t-1) + 2|a| L \sqrt{\lambda(t)} + L^2 a^2}{1+ a^2}.\]
    Finally,
    \[\begin{aligned}
        \lambda(t)&- \lambda(t-1)  \leq \frac{-a^2\lambda(t-1) + 2|a| L \sqrt{\lambda(t)} + L^2 a^2}{1+ a^2}\\
        & \overset{(a)}{\leq} \frac{-a^2\lambda(t-1) + a^2\lambda(t-1) + L^2 + L^2 a^2}{1+ a^2}  = L^2,
    \end{aligned} \]
    where $(a)$ holds by  $(\sqrt{a^2\lambda(t-1)} - L)^2 = a^2 \lambda(t-1) + L^2 -2 L\sqrt{a^2 \lambda(t-1)} \geq 0$.
\end{proof}

\begin{lemma}
\label{lemma:eiv_number_of_exploration_samples}
Suppose Assumptions \ref{ass:types}, \ref{ass:eiv_noise}, and~\ref{ass:exploration_margin} hold.
Under Algorithm \ref{alg:two}, it holds $\lambda_{\min}(B_t^{-\frac{1}{2}}U_t^\top U_t B_t^{-\frac{1}{2}}) = \Theta(A(t))$ a.s. and $\#(t) = \Theta(A(t))$ a.s.
\end{lemma}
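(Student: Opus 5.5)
We will mirror the proof of Lemma~\ref{lemma:number_of_exploration_samples}, replacing $S_t^{-1}$ by the endogenous-noise information matrix $\Lambda_t := B_t^{-\frac{1}{2}}U_t^\top U_t B_t^{-\frac{1}{2}}$ and the one-step increment bound \eqref{eq:thm3_proof_2} by Lemma~\ref{lemma:eiv_properties}. We write $\lambda(t) = \lambda_{\min}(\Lambda_t)$.

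\emph{Step 1: $\#(t)\to\infty$ and the exploration subsequence is i.i.d.} If $\#(t)$ stayed bounded, then from some time on the algorithm would exploit forever. That requires $\lambda(t)\ge A(t)$ with $\lambda(t)$ frozen, which contradicts $A(t)\to\infty$. Exactly as in the proof of Lemma~\ref{lemma:number_of_exploration_samples}, we realize the probing incentives as $\varepsilon(\tau)=\mu(\#(\tau))$ for an auxiliary i.i.d.\ Gaussian sequence $\{\mu(k)\}_k$. Since $B_t,U_t$ are updated only at exploration times, and the noises $(e^k,v^k)$ are i.i.d.\ and independent of the switching decision made at time $\tau-1$, the summands $\delta\,\xi^i\xi^{j\top}$ along exploration times form an i.i.d.\ sequence indexed by $k=\#(\tau)$.

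\emph{Step 2: linear growth in $\#(t)$.} By the strong law of large numbers, $\#(t)^{-1}B_t\to M_{22}$ and $\#(t)^{-1}U_t\to M_{12}$ a.s., where $M_{ij}=\mathbb E[\xi^i\xi^{j\top}\delta]$. Lemma~\ref{lemma:eiv_excitation} gives $\operatorname{Sym}(M_{ij})\succeq c_{ij}\mathbb I$. In particular $M_{22}\succ 0$, and $M_{12}$ is nonsingular, since $y^\top M_{12}y\ge c_{12}\|y\|^2$. By continuity,
\[
\#(t)^{-1}\Lambda_t\to M_{22}^{-\frac{1}{2}}M_{12}^\top M_{12}M_{22}^{-\frac{1}{2}}\succ 0 \quad \text{a.s.}
\]
Hence there exists a constant $c>0$ such that $\lambda(t)/\#(t)\to c$ a.s., which is the analogue of \eqref{eq:thm3_proof_1}. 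In particular $t_0<\infty$ a.s., so Lemma~\ref{lemma:eiv_properties} applies for all large $t$.

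\emph{Step 3: sandwich $\lambda(t)$ by $A(t)$.} This step is easier than in Lemma~\ref{lemma:eigenvalue_threshold}, because the switching rule \eqref{eq:eiv_switches} tests $\lambda(t)$ directly rather than a trace surrogate, so the factor $D$ disappears.

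For the upper bound: during exploration $\lambda(t)<A(t)$. At a switch time $\tau_k$, Lemma~\ref{lemma:eiv_properties} gives $\lambda(\tau_k)\le A(\tau_k-1)+L^2$. During exploitation $\lambda$ is frozen, so $\lambda(t)\le A(t)+L^2=O(A(t))$.

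For the lower bound: during exploitation $\lambda(t)\ge A(t)$. During an exploration interval $[\sigma_k,t]$ we use Step 2 in the form $(c-\epsilon)\#\le\lambda\le(c+\epsilon)\#$ and write $\#(t)=\#(\sigma_k-1)+(t-\sigma_k+1)$. Here $\lambda(\sigma_k-1)\ge A(\sigma_k-1)$, and since $t-A(t)$ is eventually increasing we have $t-\sigma_k+1\ge A(t)-A(\sigma_k-1)$. The convex-combination argument of \eqref{eq:thm3_proof_4} then yields $\lambda(t)\ge\min\{c^-,c^-/c^+\}A(t)$.

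Combining the two bounds gives $\lambda(t)=\Theta(A(t))$, and then Step 2 gives $\#(t)=\Theta(A(t))$.

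The main obstacle is Step 2. The limit of $\Lambda_t$ involves a product of two random matrices normalized by $B_t$, so we need nonsingularity of $M_{12}$ and not merely positive definiteness of $M_{22}$. We also need the indicator $\delta$, which depends on $\hat x^3$, to preserve the i.i.d.\ structure along the random exploration times. We will handle the former through the symmetric-part bound of Lemma~\ref{lemma:eiv_excitation}. For the latter we condition on $\mathcal F_{\tau-1}$, noting that the probing draw and the fresh noises at an exploration time are independent of the decision to explore.
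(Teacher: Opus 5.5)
Your proposal is correct and takes the paper's route. The paper's proof only says the result follows by running the procedure of Lemma~\ref{lemma:number_of_exploration_samples}, with Lemma~\ref{lemma:eiv_excitation} supplying linear growth in $\#(t)$ and Lemma~\ref{lemma:eiv_properties} replacing the one-step bound \eqref{eq:thm3_proof_2}. Your Steps 1--3 are that procedure, and they also spell out what the paper leaves implicit: the nonsingularity of $M_{12}$ needed for the limit of $\#(t)^{-1}B_t^{-\frac{1}{2}}U_t^\top U_tB_t^{-\frac{1}{2}}$, the i.i.d.\ structure of the fresh noises at exploration times, and the absence of the factor $D$ because \eqref{eq:eiv_switches} tests $\lambda_{\min}$ directly.
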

\begin{proof}
    The result follows from Lemmas \ref{lemma:eiv_excitation} and \ref{lemma:eiv_properties}, and the switching times \eqref{eq:eiv_switches}, by the procedure in Lemma \ref{lemma:number_of_exploration_samples}.
\end{proof}

\begin{proof}[\textbf{Proof of Theorem \ref{thm:eiv_algorithm_regret}}]

    As established in Lemma \ref{lemma:eiv_number_of_exploration_samples}, it holds that $\lambda_{\min}(B_t^{-\frac{1}{2}}U_t^\top U_t B_t^{-\frac{1}{2}}) = \Theta(A(t))$ a.s. Utilizing Theorem \ref{thm:eiv_sufficient_strong_consistency}, it follows that
    \[\|\tilde \Theta(t) - \Theta^*\|_F^2 = O\Big(\frac{\log t}{\lambda_{\min}(\cdot)}\Big)= O\Big(\frac{\log t}{A(t)}\Big) = O(t^{-\gamma}) \text{ a.s.}\]
    By the Lipschitzness in Assumption~\ref{ass:feasibility}, the system planner's regret in~\eqref{eq:eiv_regret} satisfies
    \[\mathcal{J}_t \leq {L_\Psi}^2 \sum_{\ell=1}^t(\|\hat x^{1}(\ell)-x_r^{1}(\ell)\|_2^2 + \| u^{1}(\ell)-u_r^{1}(\ell)\|_2^2),\]
    where $x_r^{k}(t) = H(p^*+v^{k}(t))$ and $u_r^{k}(t) = \gamma(x_r^{1}(t), p^*)$.
    Similarly to the proof of Theorem~\ref{thm:algorithm_regret}
    we obtain
    \begin{equation}
    \label{eq:eiv_algorithm_regret_temp1}
    \mathcal J_t  \!\leq \!  {L_\Psi}^2 \!\sum_{\ell=1}^t \!\left( 1 \!+ 2\|p^*\|_\infty^2\! + 2\|p(\ell)\!-\! p^*\|_2^2 \right)\|\hat x^{1}(\ell)\!-\! x^{1}_r(\ell)\|_2^2,
    \end{equation}
    
    We consider the RHS of~\eqref{eq:eiv_algorithm_regret_temp1} in each of the exploitation phases
    of Algorithm~\ref{alg:two}.
    By Lemma~\ref{lemma:NE_bijection_mapping}, it holds that
    \[\begin{aligned}
        \|\hat x^{1}-x_r^{1}\|_2^2 & = \|H(p(\ell)+v^{1}(\ell)) - H(p^*+v^{1}(\ell))\|_2^2\\
        &\leq h_2^2\|p(\ell) - p^* \|_2^2.
    \end{aligned}\]
    Additionally, in each exploitation iteration the planner sets $p(\ell) = - \tilde \Theta(\ell-1) \Phi(x^\dagger)$, so by arguments made previously,
    we conclude that 
    \begin{align}
        \mathcal{J}_t^{\textrm{exploit}}\! \!& = \!O\Big(\! \sum_{\ell=\ell_0-1}^t ( \| \tilde \Theta(\ell)- \Theta^*\|_F^2 + \| \tilde \Theta(\ell)- \Theta^*\|_F^4)\Big) \notag \\
        & \overset{(a)}{=}\! O\Big(\sum_{\!\ell = \ell_0-1}^t \!\!\ell^{-\gamma}\Big) \!+ O\Big(\sum_{\!\ell = \ell_0-1}^t \!\!\ell^{-2\gamma}\Big) \text{ a.s.,} \label{eq:eiv_algorithm_regret_temp2}
    \end{align}
    where $(a)$ follows from $ \| \tilde \Theta(\ell)- \Theta^*\|_F^2 = O(\ell^{-\gamma})$ a.s. For $\gamma\in[0.5,1 )$,~\eqref{eq:eiv_algorithm_regret_temp2} implies that $\mathcal{J}_t^{\textrm{exploit}} = O(t^{1-\gamma})$ a.s.

    The exploration-phase bound is identical to that in Theorem~\ref{thm:algorithm_regret} and is omitted for brevity.
    Hence,
    \[\mathcal{J}_t = \mathcal{O}(t^\gamma \log t + t^{1-\gamma}) \text{ a.s.},\]
    and 
    for $\gamma \in [\frac{1}{2}, 1)$, this implies $\mathcal{J}_t = O(t^\gamma \log t)$ a.s.
\end{proof}

\section{Proof of Auxiliary Results}
\label{app:proofs_auxiliary}
This appendix provides auxiliary lemmas on the game pseudo-gradient, and the excitation of regressors under the probing incentives. These lemmas support the main results presented in Appendix \ref{app:proofs_main}.

\subsection{Proof of Lemma~\ref{lem:linear_inccentive_is_optimal} and Lemma~\ref{lemma:NE_bijection_mapping}}
\begin{proof}[\textbf{Proof of Lemma~\ref{lem:linear_inccentive_is_optimal}}]
    We first establish that $\gamma^*(\cdot)$ is a solution to Problem~\ref{problem:ID}.
    Observe that
    (i) under this incentive, there exists a diagonal matrix $R \in \mathbb R_+^{n \times n}$ with positive diagonal entries such that $\operatorname{Sym}(R {\nabla} G_\gamma(x)) \succeq \operatorname{Sym}( R {\nabla} G_0(x)) \succ 0, \forall x\in \mathcal X$. By~\cite[Theorem 2]{rosenExistenceUniquenessEquilibrium1965a}, this is sufficient for the existence and uniqueness of the NE in the incentivized game; 
    (ii) $G_\gamma(x^\dagger) = G_0(x^\dagger) + p^*=0$, where $p^*=(p_i^*)_{i\in[n]}$; (iii) the implemented payment $\gamma^*(x^\dagger)=u^\dagger$.

    To establish optimality, let $\gamma\in\Gamma$ be a solution to Problem~\ref{problem:ID}.  Since $\gamma_i(\cdot)$ is convex in $x_i$, it holds that $\gamma_i(x)= \gamma_i(x_i)\geq \gamma_i(x_i^\dagger) + \frac{\partial}{\partial x_i} \gamma_i(x^\dagger)(x_i- x_i^\dagger), \forall x\in \mathcal X$. Assertion \eqref{eq:optimal_policy_general_0} then follows from the facts that $\gamma_i(x^\dagger_i)=u_i^\dagger$ and $\frac{\partial}{\partial x_i} \gamma_i(x^\dagger)=-\frac{\partial}{\partial x_i} \ell_i(x^\dagger)=p_i^*$. 
\end{proof}

\begin{proof}[\textbf{Proof of Lemma~\ref{lemma:NE_bijection_mapping}}]
We prove the lemma in three steps.
$(i)$ First, we show the uniqueness of the response map $x^*(\cdot)$. 
For any $x,y\in {\mathcal{X}}$, denote $v = x-y$. Since ${\mathcal{X}}$ is convex, $x_t = y + tv \in {\mathcal{X}}$ for all $t\in [0,1]$. Under Assumption \ref{ass:types},
\[v^\top R (G_0(x) - G_0(y)) = \int_0^1 v^\top R{\nabla} G_0(x_t) v dt \geq \frac{m}{2}\|v\|_2^2,\]
and then
\(
(G_0(x) - G_0(y))^\top R (x-y) \geq \frac{m}{2}\|x-y\|_2^2,    
\)
i.e., $RG_0 (x)$ is strongly monotone on ${\mathcal{X}}$. Further, for any $p\in \mathbb R^n$, $RG_\gamma(x)$ is strongly monotone, which implies the incentivized game has a unique NE on $\mathcal X$ \cite[Theorem~2]{rosenExistenceUniquenessEquilibrium1965a}. This implies the uniqueness of the response map.

$(ii)$ Next, we show that the map $x^*(\cdot)$ restricted to $\mathcal P$ is a diffeomorphism. By Assumption \ref{ass:types}, ${\nabla} G_0(x)$ is nonsingular for every $x\in \operatorname{int}{\mathcal{X}}$, so $G_0$ is a local diffeomorphism on $\operatorname{int}{\mathcal{X}}$. 
Strong monotonicity implies the injectivity of $G_0$ on $\mathcal X$. Moreover, on $\mathcal{P} = - G_0(\operatorname{int} {\mathcal{X}})$, $G_0:\operatorname{int} {\mathcal{X}} \to -\mathcal{P}$ is surjective and thus a global diffeomorphism.
Consequently, $G_0$ admits a $\mathcal{C}^1$ inverse $G_0^{-1}: -\mathcal{P} \to \operatorname{int}{\mathcal{X}}$, 
\[
H(p) := G_0^{-1}(-p),\quad  \forall p \in \mathcal{P}.
\]
Then $H:\mathcal P\to\operatorname{int}\mathcal X$ is a diffeomorphism and satisfies $G_0(H(p)) = -p$, which is precisely \eqref{eq:first_order_condition}.
Moreover, for any $p\in \mathbb R^n$ such that the response $x^*(p)$ is in $\operatorname{int} {\mathcal{X}}$, it holds that $G_\gamma(x^*(p))=G_0(x^*(p)) + p = 0$. Hence, for all $p\in \mathcal P$, $x^*(p)=G_0^{-1}(-p)=H(p)$.

$(iii)$ To show \eqref{eq:H_singular_values_bounded}, differentiating $H(p)$ gives ${\nabla} H(p) = -{\nabla} G_0(x)^{-1}$, where $x = H(p)$ is in $\operatorname{int} {\mathcal{X}}$. 
Let $R \succ 0$ be the matrix in Assumption \ref{ass:types} and $u \in {\mathbb{R}}^n\setminus\{0\}$.
We have
\[\begin{aligned}
    m \|u\|_2^2 & \leq u^\top(R {\nabla} G_0(x) + {\nabla} G_0(x)^{\top} R)u \\
    & = 2 u^\top R {\nabla} G_0(x) u \le
2\|Ru\|_2\,\|{\nabla} G_0(x)u\|_2.
\end{aligned} \]
 Therefore, $\|{\nabla} G_0(x)u\|_2 \ge \frac{m}{2\lambda_{\max}(R)}\|u\|_2$,
which implies that $\sigma_{\min}({\nabla} G_0(x)) \ge \frac{m}{2\lambda_{\max}(R)}$.
Moreover, 
\begin{align*}
\|{\nabla} H(p)\|_2 &= \|{\nabla} G_0(p)^{-1}\|_2=\sigma_{\max}({\nabla} G_0(p)^{-1}) \\
&= \frac{1}{\sigma_{\min}({\nabla} G_0(x))} \leq \frac{2\lambda_{\max }(R)}{m }=h_2 .
\end{align*}   
Likewise, by the compactness of ${\mathcal{X}}$, it holds that
\begin{align*}
&\sigma_{\min}({\nabla} H(p)) = \sigma_{\min}(-{\nabla} G_0(x)^{-1})\\
&=(\sigma_{\max}({\nabla} G_0(x)))^{-1} \geq (\max_{x\in X} \sigma_{\max}({\nabla} G_0(x)))^{-1}=h_1.
\end{align*}
\end{proof}

\subsection{Proof of Lemma \ref{lemma:excitation}}
We preface the proof of Lemma \ref{lemma:excitation} with two results.

\begin{lemma}
\label{lemma:M_matrix}
Let $\Phi: \mathbb R^n \to \mathbb R^D$ be the monomial map defined in~\eqref{eq:Phi_map}. 
For constants $0<\mu_1\le \mu_2$, let 
\[\mathcal{A}\! = \!\{A \in {\mathbb{R}}^{n \times n}\!: \!\mu_1 \|w\|_2\! \leq \|Aw\|_2 \!\leq \mu_2 \|w\|_2, \forall w \in {\mathbb{R}}^n\},\]
and $\mathcal{B} \subset {\mathbb{R}}^n$ compact.
For every $A \in \mathcal{A}$ and $b\in \mathcal{B}$,
there is a $M(A, b) \in \mathbb R^{D\times D}$ such that $\Phi(Ax+b) = M(A, b)^\top \Phi(x)$. Moreover, there exists $\bar c > 0$ such that $\|M(A,b)w \|_2 \geq \bar c \|w\|_2 $, for all $w \in {\mathbb{R}}^D$.
\end{lemma}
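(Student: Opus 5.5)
Each monomial of $Ax+b$ of total degree $k\le d$ expands, by the multinomial theorem, into a polynomial in $x$ of degree at most $k$. So $\Phi(Ax+b)$ is a linear combination of the entries of $\Phi(x)$ with coefficients that are polynomial in the entries of $(A,b)$. This defines $M(A,b)\in\mathbb R^{D\times D}$ with $\Phi(Ax+b)=M(A,b)^\top\Phi(x)$. Since the monomials are linearly independent as functions on any open set, $M(A,b)$ is unique, and it depends continuously (indeed polynomially) on $(A,b)$.

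The lower bound will come from invertibility plus compactness. Every $A\in\mathcal A$ is invertible, because $\|Aw\|_2\ge\mu_1\|w\|_2$. So the affine map $x\mapsto Ax+b$ has inverse $y\mapsto A^{-1}(y-b)$, which is again affine. Applying the first step to the inverse map gives a matrix $N$ with $\Phi(A^{-1}(y-b))=N^\top\Phi(y)$. Substituting $y=Ax+b$ yields
\begin{equation*}
\Phi(x)=N^\top M(A,b)^\top\Phi(x)\quad\text{for all } x .
\end{equation*}
Linear independence of the monomials then forces $M(A,b)N=\mathbb I$, so $M(A,b)$ is nonsingular. Equivalently, $M(A,b)$ is block-triangular with respect to the degree grading, and its diagonal blocks are the symmetric-power representations $\operatorname{Sym}^k(A)$, which are nonsingular.

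To make the bound uniform, consider $\sigma_{\min}(M(A,b))$ as a function of $(A,b)$; it is continuous because $M$ is continuous. The set $\mathcal A$ is closed, since both inequalities survive limits, and bounded, since $\|A\|_2\le\mu_2$. Hence $\mathcal A\times\mathcal B$ is compact, and the positive continuous function $\sigma_{\min}(M(A,b))$ attains a minimum $\bar c>0$ there. This gives $\|M(A,b)w\|_2\ge\bar c\|w\|_2$ for all $w\in\mathbb R^D$.

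The main point of care is the nonsingularity step, specifically justifying $M N=\mathbb I$ from an identity of functions. This needs linear independence of the monomials on an open set, or on all of $\mathbb R^n$ since the identity is polynomial. Everything else is routine. An explicit $\bar c$ could be extracted from the triangular structure, with $\sigma_{\min}(\operatorname{Sym}^k A)\ge\mu_1^k$ and off-diagonal blocks bounded via $\mathcal B$, but compactness is enough.
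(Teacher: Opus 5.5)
Your proof is correct and takes essentially the same route as the paper. The paper treats $p\mapsto p(A\,\cdot+b)$ as an automorphism of the space $P_{n,d}$ of polynomials of degree at most $d$ (with inverse $p\mapsto p(A^{-1}(\cdot-b))$), takes $M(A,b)$ as its matrix in the monomial basis, and concludes by continuity of $(A,b)\mapsto M(A,b)$ on the compact set $\mathcal A\times\mathcal B$. Your $MN=\mathbb I$ computation is the coordinate form of that automorphism step, and your check that $\mathcal A$ is closed fills in a detail the paper leaves implicit; the only caveat concerns your unused side remark $\sigma_{\min}(\operatorname{Sym}^k A)\ge\mu_1^k$, which holds in an orthonormal basis of symmetric tensors but not in the raw monomial basis, where a basis-dependent constant appears.
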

\begin{proof}
Let $P_{n,d}$ denote the vector space of $n$-variate polynomials of degree at most $d$. $\Phi(\cdot)$ is a basis, so every polynomial $p\in P_{n,d} $ has a unique representation $p(x) = c^\top \Phi(x)$, $c\in {\mathbb{R}}^D$.

Let $T:P_{n,d} \to P_{n,d}$ be $(Tp)(x) = p(Ax+b)$. Since $A\in\mathcal A$, $A$ is invertible. $T$ is linear and invertible, with the inverse $(T^{-1}p)(x) = p(A^{-1}(x-b))$, so $T$ is an automorphism.
Therefore, there exists a unique matrix $M$ such that the application of $T$ on $p$ is represented as
\[(Tp)(x) = (Mc)^\top \Phi(x) = c^\top \Phi(Ax+b),\]
and hence $M^\top \Phi(x) = \Phi(Ax+b)$.

Matrix $M$ is invertible, otherwise, there exists a $p\in P_{n,d}$, such that $(Tp)(x) = (Mc)^\top \Phi(x) = 0$, which contradicts that  $\ker T = \{0\}$. Finally, $\mathcal{A}\times \mathcal{B}$ is compact and $(A, b) \mapsto M(A,b)$ is continuous since each entry $[M(A,b)]_{ij}$ is polynomial in $A,b$. Therefore, $M(A,b)$  attains a minimum singular value over $\mathcal{A}\times\mathcal{B}$.
\end{proof}

\begin{lemma}
\label{lemma:PD_monomial_moments}
Let \(\Phi:\mathbb R^n\to\mathbb R^D\) be the monomial map defined
in~\eqref{eq:Phi_map}. Let $x$ be a random variable in ${\mathbb{R}}^n$ and \(A\) be an event with \(\mathbb P(A)>0\).
If the support of \(x\mid A\) contains a nonempty open set, then
\[
    \mathbb E[\Phi(x)\Phi(x)^\top\mid A]\succ0.
\]
\end{lemma}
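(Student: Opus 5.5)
We argue by contradiction using the quadratic form. The matrix $\mathbb E[\Phi(x)\Phi(x)^\top\mid A]$ is symmetric and positive semidefinite, since for any $w\in\mathbb R^D$
\[
w^\top \mathbb E[\Phi(x)\Phi(x)^\top\mid A]\, w = \mathbb E\big[(w^\top\Phi(x))^2\mid A\big]\ge 0 .
\]
It therefore suffices to show that this quantity vanishes only for $w=0$. We tacitly assume the conditional second moments are finite. This holds automatically in the applications, where $x$ takes values in the compact set $\mathcal X$.

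Suppose $w\neq 0$ and $\mathbb E[(w^\top\Phi(x))^2\mid A]=0$. Then $q(x):=w^\top\Phi(x)=0$ almost surely under the conditional law $\mathbb P(\cdot\mid A)$.

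\textbf{Step 1 (vanishing on the support).} The zero set $Z=\{y: q(y)=0\}$ is closed, because $q$ is continuous. Its complement $Z^c$ is open and has conditional probability zero. Every point of $Z^c$ therefore has an open neighborhood of measure zero, so no such point lies in the support of $x\mid A$. Hence $\operatorname{supp}(x\mid A)\subseteq Z$. By hypothesis the support contains a nonempty open set $O$, so $q\equiv 0$ on $O$.

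\textbf{Step 2 (a polynomial vanishing on an open set is zero).} The function $q$ is a polynomial of degree at most $d$ in $n$ variables, with coefficient vector $w$ in the monomial basis $\Phi$. If $q$ vanishes on an open set $O$, then all partial derivatives of $q$ vanish at any point $x_0\in O$. Its Taylor expansion at $x_0$, which is exact for polynomials, is then zero, so $q$ is the zero polynomial. An alternative route is induction on $n$, using the fact that a univariate polynomial with infinitely many roots is zero.

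\textbf{Step 3 (contradiction).} The monomials in $\Phi$ are linearly independent as functions, which is the fact already used in the proof of Lemma~\ref{lemma:M_matrix}. Hence $q\equiv 0$ forces $w=0$, contradicting $w\neq 0$. Thus $w^\top \mathbb E[\Phi(x)\Phi(x)^\top\mid A]\, w>0$ for every nonzero $w$, which gives the claimed strict positive definiteness.

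The only step needing care is Step 1: it is where the support hypothesis is converted into pointwise vanishing of $q$, and this uses the continuity of $q$. Steps 2 and 3 are standard facts about polynomials.
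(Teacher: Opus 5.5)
Your proof is correct and is essentially the paper's argument in contrapositive form. The paper fixes $z\neq 0$, picks $x_0$ in the open set $B\subset\operatorname{supp}(x\mid A)$ with $z^\top\Phi(x_0)\neq 0$, and bounds $\mathbb E[(z^\top\Phi(x))^2\mid A]\ge c^2\,\mathbb P(x\in U_{x_0}\mid A)>0$ on a neighborhood $U_{x_0}$ where $|z^\top\Phi|\ge c$. Your Step~2 usefully makes explicit a fact the paper uses only tacitly: a nonzero polynomial cannot vanish identically on a nonempty open set.
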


\begin{proof}
Let \(z\in\mathbb R^D\setminus\{0\}\). Then
\(
    z^\top\mathbb E[\Phi(x)\Phi(x)^\top\mid A]z
    =
    \mathbb E[(z^\top\Phi(x))^2\mid A],
\)
where \(T_z(x)=z^\top\Phi(x)\) is a non-trivial polynomial with respect to $x$. 

By assumption, \(\operatorname{supp}(x\mid A) \supset B\), where $B$ is nonempty and open. Since \(T_z\) is continuous and nonzero, there is a point
\(x_0\in B\) such that \(T_z(x_0)\neq0\), and a neighborhood \(U_{x_0}\) of \(x_0\) such
that
\(|T_z(x)|\ge c > 0\) for all \(x\in U_{x_0}.\)
Because \(U_{x_0} \cap \operatorname{supp}(x\mid A) \neq \emptyset\), we have 
\(\mathbb P(x\in U_{x_0}\mid A)>0.\)
Therefore,
\[
    \mathbb E[(T_z(x))^2\mid A]
    \ge
    c^2\mathbb P(x\in U_{x_0}\mid A)>0.
\]
Since this holds for every \(z\neq0\), the result follows.
\end{proof}

\begin{proof}[\textbf{Proof of Lemma~\ref{lemma:excitation}}]
Notice that, due to Lemma \ref{lemma:NE_bijection_mapping}, 
\[
\begin{aligned}
& \mathbb E(\xi(t)\xi(t)^\top \delta(t))\\  
= \, &\mathbb P (\hat x(t) \in \operatorname{int} \mathcal{X} )\mathbb E(\xi(t)\xi(t)^\top \mid \hat x(t) \in \operatorname{int} \mathcal{X} ) \\ 
= \,& 
\mathbb P (p(t) \in \mathcal{P} )\mathbb E(\xi(t)\xi(t)^\top \mid p(t) \in \mathcal{P}),
\end{aligned}\]
and it is sufficient that $\mathbb E(\xi(t)\xi(t)^\top \mid p(t) \in \mathcal{P}) \succ 0$.
From Assumption~\ref{ass:feasibility}, $p^* = -\Theta^* \Phi(x^\dagger)$ is interior point to the open set $\mathcal{P}$, so, there is some open ball $B(p^*, \varepsilon) \subset \mathcal{P}$.
Then
\[\begin{aligned}
    & \mathbb E(\xi(t)\xi(t)^\top \mid p(t) \in \mathcal{P}) \\ &\succeq \mathbb P\big( p(t) \!\in B\mid p\!\in \mathcal{P}\big)E\big(\xi(t)\xi(t)^\top \mid p(t) \!\in B\big),
\end{aligned}\]
where $B = B(p^*, \varepsilon)$.
Denote $ Q =E\big(\xi(t)\xi(t)^\top \mid p(t) \in B(p^*, \varepsilon)\big)$, so it is sufficient that $Q \succ 0$. We omit the dependence on $t$ for ease of notation.

When $p\in B(p^*, \varepsilon)$, Lemma \ref{lemma:NE_bijection_mapping} guarantees that $\hat x = H(p)$ where $H \in \mathcal{C}^1$.
By the mean value theorem,
\begin{equation}
\label{eq:excitation_temp0}
H(p) = H(p^*) + \Big(\int_{0}^1{\nabla} H(\gamma_\tau) d\tau \Big)(p-p^*),
\end{equation}
where $\gamma_\tau = p^* + \tau (p-p^*)$,  $\tau \in [0,1]$, and $\gamma_\tau \in B(p^*, \varepsilon)$.
Let $A_p = \int_{0}^1{\nabla} H(\gamma_\tau) d\tau$ and $H_p = H(p^*) - A_p p^*$, so that $H(p) = A_p p + H_p$, where the subscript is used to indicate the dependence on $p$.
We will show that both $A_p$ and $H_p$ belong to compact sets, independent of $p$.

Regarding the matrix $A_p$, observe that for any $w \in {\mathbb{R}}^n$,
\begin{equation}
\label{eq:excitation_temp1}
    \|A_pw\|_2 \leq \int_0^1 \| {\nabla} H(\gamma_\tau)w \|_2d \tau \overset{(a)}{\leq} h_2 \|w\|_2,
\end{equation}
where $(a)$ holds by Lemma \ref{lemma:NE_bijection_mapping}. 
Also, for all $p\in B(p^*, \varepsilon)$,
\begin{equation}
\label{eq:excitation_temp2}
\operatorname{Sym}(RA_p)= \int_0^1 \operatorname{Sym}(R{\nabla} H(\gamma_\tau))d\tau.   
\end{equation}
Due to Lemma~\ref{lemma:NE_bijection_mapping}, map $H$ satisfies~\eqref{eq:first_order_condition} which further implies \(\nabla H(p) = - \nabla G_0(H(p))^{-1},\)
where $G_0(x) = \Theta^* \Phi(x)$ is the nominal pseudo-gradient.
Observe that
\begin{equation*}
\begin{aligned}
    &\operatorname{Sym}(R \nabla G_0(x)^{-1}) \\
     \overset{(a)}{=} &R \operatorname{Sym}((R \nabla G_0(x))^{-1}) R \\
     \overset{(b)}{=} &R (R\nabla G_0(x))^{-\top} \!\operatorname{Sym}(R \nabla G_0(x)) (R\nabla G_0(x))^{-1}R,
\end{aligned}
\end{equation*}
where $(a)$ is due to $R\nabla G_0(x)^{-1} = R(R\nabla G_0(x))^{-1}R$ and $(b)$ due to 
\(\operatorname{Sym}(M^{-1})=M^{-\top}\operatorname{Sym}(M)M^{-1}\).
By Assumption~\ref{ass:types}, 
\(
    \operatorname{Sym}\left(R\nabla G_0(x)\right)\succeq m/2
\), therefore
\[\begin{aligned}
    \operatorname{Sym}(R\nabla G_0(x)\!^{-1}) \!& \succeq\! \frac{m\lambda_{\min}^2(R)}{2}(R\nabla G_0(x))\!^{-\top}\!(R\nabla G_0(x))\!^{-1} \\
    & \succeq\frac{m\lambda_{\min}^2(R)h_1^2}{2\lambda_{\max}^2(R)} \mathbb I := a\mathbb I,
\end{aligned}\]
where $h_1\! =\! (\!\max_{x\in {\mathcal{X}}}\! \|\nabla G_0(x)\|_2)^{-1}$ as in Lemma~\ref{lemma:NE_bijection_mapping}, and the constant $a$ does not depend on $p$.
Hence, by~\eqref{eq:excitation_temp2},
\[\begin{aligned}
    \operatorname{Sym}(RA_p)  & = -\int_0^1 \operatorname{Sym}(R{\nabla} G_0(H(\gamma_\tau))^{-1})d\tau \preceq - a \mathbb I,
\end{aligned} \]
and therefore, for $w \in {\mathbb{R}}^n$,
\[a \|w\|_2^2 \leq - w^\top\! Sym(RA_p) w \leq \|R\|_2 \|A_pw\|_2\|w\|_2.\]
It follows that $\|A_pw\|_2 \geq( a / \|R\|_2) \|w\|_2$, This fact, along with~\eqref{eq:excitation_temp1}, implies that for all $p \in B(p^*, \varepsilon)$,
\[A_p \in \mathcal{A}:= \{A: \frac{a\|w\|_2}{\|R\|_2} \leq \|Aw\|_2 \leq h_2\|w\|_2,\,  \forall w \in {\mathbb{R}}^n\}.\]
Moreover, by~\eqref{eq:excitation_temp1}, $H_p$ satisfies
\(\|H_p\|_2\leq  \|H(p^*) \|_2 + h_2 \|p^*\|_2,\)
so it lies in a compact ball $\mathcal{B} \subset {\mathbb{R}}^n$.

We use Lemma \ref{lemma:M_matrix} on $H(p) = A_p p + H_p$ , to conclude that there is a $M_p \in {\mathbb{R}}^{D\times D}$ such that 
\begin{equation}
\label{eq:excitation_temp4}
    \begin{aligned}
    & \Phi(H(p))\Phi(H(p))^\top = M_p^\top \Phi(p) \Phi(p)^\top M_p,
\end{aligned} 
\end{equation}
where $M_p$ depends on $p$ and satisfies $\|M_p w\|_2 \geq \bar c\|w\|_2$, for a positive $\bar{c} > 0$, $ w \in {\mathbb{R}}^D$.
Since $\mathcal{A}$, $\mathcal{B}$ do not depend on $p$, the bound $\bar c$ is independent of $p$.
Taking the conditional expectation of \eqref{eq:excitation_temp4}, for all $w \in {\mathbb{R}}^D$, $\|w\|_2 = 1$,
\begin{align}
    w^\top Qw
    = &\mathbb E \big[  (\Phi(p)^\top M_pw)^2\mid p \in B(p^*, \varepsilon)\big], \label{eq:excitation_temp5}
\end{align}
where $\|M_p w \|_2 \geq \bar c\|w\|_2$ over the event $\{p \in B(p^*, \varepsilon)\}$.

We show that~\eqref{eq:excitation_temp5} implies $w^\top Q w > 0$ via contradiction.
Assume $w^\top Q w=0$. By~\eqref{eq:excitation_temp5},
\(\Phi(p)^\top M_p w = 0\), at almost every \(p\in B(p^*, \varepsilon).\)
Since $p \mapsto \Phi(p)^\top M_p w$ is continuous, any $p_0 \in B(p^*, \varepsilon)$ such that
$\Phi(p_0)^\top M_{p_0} w \neq 0$ would induce a nonempty neighborhood $U_{p_0}$, over which the integral~\eqref{eq:excitation_temp5} would be positive, so \(\Phi(H(p))=\Phi(p)^\top M_p w = 0\), for all \(p\in B(p^*, \varepsilon).\)
Lemma~\ref{lemma:PD_monomial_moments} applied to $H(p) \mid p \in B(p^*, \varepsilon)$
implies that $M_pw = 0$, which contradicts $\|M_p w \|_2 \geq \bar c\|w\|_2$.
\end{proof}

\subsection{Proof of Lemma \ref{lemma:eiv_excitation}}
\begin{proof}[\textbf{Proof of Lemma \ref{lemma:eiv_excitation}}]
   Similarly to Lemma \ref{lemma:excitation}, it is sufficient to show that
    \[\mathbb E (\xi^{i}(t)\xi^{j}(t)^{\top} \mid \delta(t)=1 ) \succ 0,\]
    where $i,j \in \{1,2\}$.
    On event $\{\delta(t)=1\}$, we have both $\hat x^{1}(t), \hat x^{2}(t) \in \operatorname{int} {\mathcal{X}}$ and thus $\hat x^{i}(t) = H(p(t) + v^{i}(t))$, $i\in \{1,2\}$, where $H$ is in Lemma~\ref{lemma:NE_bijection_mapping}.
    
    To ease notation, we omit the time dependence and write
    \[\xi^{i} = \Phi(\hat x^{i}) = \Phi(H(p+ v^{i})) \text{ and }\delta =1.\]
    We now consider the following two cases.
    \textit{Case 1:} $i \neq j$. Variable $\delta$ is measurable with respect to $(p,v^{3})$, and $v^{i}$, $ v^{j}$ are independent of $v^{3}$ and each other. Hence $v^{i}, v^{j}$ are independent when conditioned on $(p,\delta)$. 
    Then
        \[
    \begin{aligned}
    \mathbb E[\xi^i(\xi^j)^\top\!\mid\! p,\delta=1]
    &=\mathbb E_{v^{i}}[\xi^i\!\mid\! p,\delta=1]\,\mathbb E_{v^{j}}[\xi^j\!\mid\! p,\delta=1]^\top \\
    & = \mu_p \mu_{p}^\top,
    \end{aligned}
    \]
    where the LHS expectation is with respect to the joint law of $(v^{i}, v^{j})$, 
    \(\mu_p = \mathbb E_{v^{i}}[\xi^i\!\mid\! p,\delta=1] =\mathbb E_{v^{j}}[\xi^j\!\mid\! p,\delta=1],\)
    and the subscript denotes the dependence on $p$.

     \textit{Case 2:} $i = j$. By the convexity of $x \mapsto xx^\top$ and Jensen's inequality, we have
    \[
    \begin{aligned}
    \mathbb E[\xi^i(\xi^j)^\top\!\mid \!p,\delta=1]
    & \succeq\mathbb E_{v^{i}}[\xi^i\!\mid\! p,\delta=1]\,\mathbb E_{v^{j}}[\xi^j\!\mid\! p,\delta=1]^\top \\
    & = \mu_p \mu_p^\top,
    \end{aligned}
    \]
    where the LHS expectation is with respect to the joint law of $(v^{i}, v^{j})$, and
    $\mu_p$ as above.
    
    In both of the above cases, we have
    \begin{align}
    \mathbb E[\xi^i(\xi^j)^\top\!\mid\!\delta=1] &= \mathbb E_p \big[\mathbb E[\xi^i(\xi^j)^\top\!\mid\! p,\delta=1] \!\mid\! \delta = 1\big ] \notag \\
    & \succeq \mathbb E_p \big[ \mu_p \mu_p^\top \mid \delta = 1\big ]. \label{eq:eiv_excitation_temp1}
    \end{align}
    For any event \(V\subseteq\{\delta=1\}\) with positive probability,
    \begin{equation}
    \label{eq:eiv_excitation_temp2}
    \mathbb E_p[\mu_p \mu_p^\top \mid \delta=1]
    \succeq \mathbb P(V \mid \delta=1)\mathbb E_p[\mu_p \mu_p^\top \mid V].
    \end{equation}
    For $0< \varepsilon < (d_{\mathcal{X}}(x^\dagger) -\eta) / 2h_2$, define the event
    \[\begin{aligned}
        V &= \{ p  \in B(p^*, \varepsilon), \, \|v^{1}\|_2 \leq \varepsilon, \, \|v^{2}\|_2 \leq \varepsilon, \, \|v^{3}\|_2 \leq \varepsilon \} \\
        & \subset \bigcap_{k\in [3]} \{ p + v^{k} \in B(p^*, 2\varepsilon) \}.
    \end{aligned}\]
    Event $V$ has nonzero measure and for $q \in B(p^*, 2\varepsilon)$,
    \[ d_{\mathcal{X}}(H(q)) \geq d_{\mathcal{X}}(x^\dagger) - \|H(q)-x^\dagger\|_2 \geq d_{\mathcal{X}}(x^\dagger) - 2h_2\varepsilon \geq \eta,\]
    which implies that  $V \subset\{\delta(t)=1\}$.By~\eqref{eq:eiv_excitation_temp1} and~\eqref{eq:eiv_excitation_temp2}, it is 
    sufficient to show that
    \(Q = \mathbb E_p (\mu_p \mu_p^\top \mid V ) \succ 0.\)
   
    On $V$, we use a change-of-basis argument as in \eqref{eq:excitation_temp0}-\eqref{eq:excitation_temp4} from Lemma \ref{lemma:excitation}. For $i \in \{1,2\}$, define
    \begin{equation*}
    \begin{aligned}
        A_{p, v^{i}} & = \int_0^1 \nabla H(p^* + \tau (p+v^{i}-p^*)) d\tau,\\
        \text{and }b_{p, v^{i}} & = H(p^*) - A_{p, v^{i}} p^* + A_{p, v^{i}}v^{i},
    \end{aligned}
    \end{equation*}
    so $H(p+v^{i}) = H(p^*) + A_{p, v^{i}}(p+v^{i}-p^*)=  A_{p, v^{i}} p + b_{p, v^{i}}$.
    Repeating the bounds derived in Lemma~\ref{lemma:excitation}, with $p$ replaced by $p+v^{i}$, together with the boundedness $\|v^{i}\|_2 \leq \bar v\sqrt{n}$, yields compact sets $\mathcal{A}$, $\mathcal{B}$. Hence, by Lemma~\ref{lemma:M_matrix},
    \begin{equation*}
    \label{eq:eiv_excitation_temp3}
        \begin{aligned}
        & \mu_p\mu_p^\top =  \mathbb E_{v_i}[M_{p,v^{i}}\mid V, p]^\top \Phi(p) \Phi(p)^\top \mathbb E_{v_i}[M_{p,v^{i}}\mid V,p],
    \end{aligned} 
    \end{equation*}
    where $M_{p,v^{i}} \in {\mathbb{R}}^{D\times D}$ depends on $(p, v^{i})$ and satisfies $\|M_{p,v^{i}} w \|_2 \geq \bar{c}\|w\|_2$, for a $\bar{c} > 0$, all $w\in {\mathbb{R}}^D$, and $p+v^{i}\in B(p^*, 2\varepsilon)$. 
    Denote $\bar M_p = \mathbb E_{v_i}[M_{p,v^{i}}\mid V,p]$, so 
    \begin{align}
    w^\top Qw
    = &\mathbb E \big[  (\Phi(p)^\top \bar M_pw)^2\mid p \in B(p^*, \varepsilon)\big]. \label{eq:eiv_excitation_temp4}
\end{align}
 Assume $w^\top Q w = 0$. By~\eqref{eq:eiv_excitation_temp4} and the continuity of $p \mapsto \Phi(p)^\top \bar{M}_p w$, it follows that for all $p \in B(p^*, \varepsilon)$,
\[\begin{aligned}
    \Phi(p)^\top \bar{M}_p w
    = \mathbb E_{v_i}[\Phi(p)^\top M_{p,v^{i}}w\mid V,p]
    = 0.
\end{aligned}\]
In turn, the above implies
\[\int_{\|v\|_2\leq \varepsilon}\Phi(H(p+v))^\top w f_v(v) dv = 0,\ \forall p\in B(p^*, \varepsilon),\]
where $f_v$ denotes the density of $v$. This implies that 
\(\Phi(H(q))^\top w = \Phi(p)^\top M_{p,v}w= 0\) over $q = p+v\in B(p^*, 2\varepsilon)$. 
Lemma~\ref{lemma:PD_monomial_moments} then  implies $M_{p,v}w= 0$, which contradicts 
$\|M_{p,v^{i}} w \|_2 \geq \bar{c}\|w\|_2$.
\end{proof}

\vspace{-3mm}
\begin{ack}
\vspace{-3mm}
This work was partially supported by the Wallenberg AI, Autonomous Systems and Software Program (WASP), funded by the Knut and Alice Wallenberg Foundation.
\end{ack}

\bibliographystyle{IEEEtran}      
\bibliography{bibliography}     

@article{basarAffineIncentiveSchemes1984,
  title = {Affine {{Incentive Schemes}} for {{Stochastic Systems}} with {{Dynamic Information}}},
  author = {Ba{\c s}ar, Tamer},
  year = 1984,
  month = mar,
  journal = {SIAM Journal on Control and Optimization},
  volume = {22},
  number = {2},
  pages = {199--210},
  issn = {0363-0129, 1095-7138},
  doi = {10.1137/0322015},
  urldate = {2025-04-19},
  langid = {english}
}

@article{basarClosedloopStackelbergStrategies1979a,
  title = {Closed-Loop {{Stackelberg}} Strategies with Applications in the Optimal Control of Multilevel Systems},
  author = {Basar, T. and Selbuz, H.},
  year = 1979,
  month = apr,
  journal = {IEEE Transactions on Automatic Control},
  volume = {24},
  number = {2},
  pages = {166--179},
  issn = {0018-9286},
  doi = {10.1109/TAC.1979.1101999},
  urldate = {2024-07-05},
  copyright = {https://ieeexplore.ieee.org/Xplorehelp/downloads/license-information/IEEE.html},
  langid = {english}
}

@inproceedings{grootReverseStackelbergGames2012,
  title = {Reverse {{Stackelberg}} Games, {{Part I}}: {{Basic}} Framework},
  shorttitle = {Reverse {{Stackelberg}} Games, {{Part I}}},
  booktitle = {the 2012 {{International Conference}} on {{Control Applications}}},
  author = {Groot, Noortje and De Schutter, Bart and Hellendoorn, Hans},
  year = 2012,
  month = oct,
  pages = {421--426},
  address = {Croatia},
  doi = {10.1109/CCA.2012.6402334},
  urldate = {2025-04-22},
  isbn = {978-1-4673-4505-7 978-1-4673-4503-3 978-1-4673-4504-0}
}

@inproceedings{grootReverseStackelbergGames2012a,
  title = {Reverse {{Stackelberg}} Games, Part {{II}}: {{Results}} and Open Issues},
  shorttitle = {Reverse {{Stackelberg}} Games, Part {{II}}},
  booktitle = {Proc. of the 2012 {{ International Conference}} on {{Control Applications}}},
  author = {Groot, Noortje and De Schutter, Bart and Hellendoorn, Hans},
  year = 2012,
  month = oct,
  pages = {427--432},
  address = {Dubrovnik, Croatia},
  doi = {10.1109/CCA.2012.6402335},
  urldate = {2025-10-27},
  isbn = {978-1-4673-4505-7 978-1-4673-4503-3 978-1-4673-4504-0}
}

@article{grootSystemOptimalRoutingTraffic2015,
  title = {Toward {{System-Optimal Routing}} in {{Traffic Networks}}: {{A Reverse Stackelberg Game Approach}}},
  shorttitle = {Toward {{System-Optimal Routing}} in {{Traffic Networks}}},
  author = {Groot, Noortje and De Schutter, Bart and Hellendoorn, Hans},
  year = 2015,
  month = feb,
  journal = {IEEE Transactions on Intelligent Transportation Systems},
  volume = {16},
  number = {1},
  pages = {29--40},
  issn = {1524-9050, 1558-0016},
  doi = {10.1109/TITS.2014.2322312},
  urldate = {2025-06-05},
  copyright = {https://ieeexplore.ieee.org/Xplorehelp/downloads/license-information/IEEE.html}
}

@inproceedings{hoControltheoreticViewIncentives1980,
  title = {A Control-Theoretic View on Incentives},
  booktitle = {Proc. of the 19th {{ Conference}} on {{Decision}} and {{Control}} (CDC)},
  author = {Ho, Yu-chi and Luh, Peter and Olsder, Geert},
  year = 1980,
  month = dec,
  pages = {1160--1170},
  address = {Albuquerque, NM, USA},
  doi = {10.1109/CDC.1980.271986},
  urldate = {2025-04-19}
}

@article{laiLeastSquaresEstimates1982,
  title = {Least {{Squares Estimates}} in {{Stochastic Regression Models}} with {{Applications}} to {{Identification}} and {{Control}} of {{Dynamic Systems}}},
  author = {Lai, Tze Leung and Wei, Ching Zong},
  year = 1982,
  month = mar,
  journal = {The Annals of Statistics},
  volume = {10},
  number = {1},
  issn = {0090-5364},
  doi = {10.1214/aos/1176345697},
  urldate = {2025-04-23}
}

@inproceedings{liAdaptivePricingOptimal2025,
  author={Li, Jiayi and Wei, Jiale and Motoki, Matthew and Jiang, Yan and Zhang, Baosen},
  booktitle={Proc. of the 64th Conference on Decision and Control (CDC)}, 
  title={Adaptive Pricing for Optimal Coordination in Networked Energy Systems with Nonsmooth Cost Functions}, 
  year={2025},
  volume={},
  number={},
  pages={4043-4050},
  doi={10.1109/CDC57313.2025.11312710}}

@article{liSociallyOptimalEnergy2024,
  title = {Socially Optimal Energy Usage via Adaptive Pricing},
  author = {Li, Jiayi and Motoki, Matthew and Zhang, Baosen},
  year = 2024,
  month = oct,
  journal = {Electric Power Systems Research},
  volume = {235},
  pages = {110640},
  issn = {03787796},
  doi = {10.1016/j.epsr.2024.110640},
  urldate = {2025-04-21},
  langid = {english}
}

@article{maheshwariAdaptiveIncentiveDesign2024,
  author={Maheshwari, Chinmay and Kulkarni, Kshitij and Wu, Manxi and Sastry, Shankar},
  journal={IEEE Transactions on Automatic Control}, 
  title={Adaptive Incentive Design With Learning Agents}, 
  year={2025},
  volume={},
  number={},
  pages={1-16},
  doi={10.1109/TAC.2025.3643351}}

@article{picardDesignIncentiveSchemes1987,
  title = {On the Design of Incentive Schemes under Moral Hazard and Adverse Selection},
  author = {Picard, Pierre},
  year = 1987,
  month = aug,
  journal = {Journal of Public Economics},
  volume = {33},
  number = {3},
  pages = {305--331},
  issn = {00472727},
  doi = {10.1016/0047-2727(87)90058-2},
  urldate = {2025-04-19},
  copyright = {https://www.elsevier.com/tdm/userlicense/1.0/},
  langid = {english}
}

@article{ratliffAdaptiveIncentiveDesign2021,
  title = {Adaptive {{Incentive Design}}},
  author = {Ratliff, Lillian J. and Fiez, Tanner},
  year = 2021,
  month = aug,
  journal = {IEEE Transactions on Automatic Control},
  volume = {66},
  number = {8},
  pages = {3871--3878},
  issn = {0018-9286, 1558-2523, 2334-3303},
  doi = {10.1109/TAC.2020.3027503},
  urldate = {2024-02-24}
}

@article{ratliffPerspectiveIncentiveDesign2019,
  title = {A {{Perspective}} on {{Incentive Design}}: {{Challenges}} and {{Opportunities}}},
  shorttitle = {A {{Perspective}} on {{Incentive Design}}},
  author = {Ratliff, Lillian J. and Dong, Roy and Sekar, Shreyas and Fiez, Tanner},
  year = 2019,
  month = may,
  journal = {Annual Review of Control, Robotics, and Autonomous Systems},
  volume = {2},
  number = {1},
  pages = {305--338},
  issn = {2573-5144, 2573-5144},
  doi = {10.1146/annurev-control-053018-023634},
  urldate = {2024-05-13},
  langid = {english}
}

@article{simaanStackelbergSolutionGames1973,
  title = {A {{Stackelberg}} Solution for Games with Many Players},
  author = {Simaan, M. and Cruz, J.},
  year = 1973,
  month = jun,
  journal = {IEEE Trans. Autom. Control},
  volume = {18},
  number = {3},
  pages = {322--324},
  issn = {0018-9286},
  doi = {10.1109/TAC.1973.1100307},
  urldate = {2024-05-30},
  copyright = {https://ieeexplore.ieee.org/Xplorehelp/downloads/license-information/IEEE.html},
  langid = {english}
}

@article{tolwinskiClosedloopStackelbergSolution1981,
  title = {Closed-Loop {{Stackelberg}} Solution to a Multistage Linear-Quadratic Game},
  author = {Tolwinski, B.},
  year = 1981,
  month = aug,
  journal = {Journal of Optimization Theory and Applications},
  volume = {34},
  number = {4},
  pages = {485--501},
  issn = {0022-3239, 1573-2878},
  doi = {10.1007/BF00935889},
  urldate = {2024-05-30},
  copyright = {http://www.springer.com/tdm},
  langid = {english}
}

@inproceedings{yorulmazSoftInducementFramework2025,
  author={Yorulmaz, Asrin Efe and Velicheti, Raj Kiriti and Bastopcu, Melih and Başar, Tamer},
  booktitle={Proc. of the 64th Conference on Decision and Control (CDC)}, 
  title={A Soft Inducement Framework for Incentive-Aided Steering of No-Regret Players}, 
  year={2025},
  volume={},
  number={},
  pages={4396-4401},
  address = {Rio de Janeiro, Brazil}
  }

@book{boltonContractTheory2005,
  title = {Contract Theory},
  author = {Bolton, Patrick and Dewatripont, M. and Campbell, Arthur},
  year = {2005},
  publisher = {MIT Press},
  location = {Cambridge, Mass},
  isbn = {978-0-262-02576-8 978-0-262-53299-0},
  pagetotal = {724}
}

@ARTICLE{barreraDynamicIncentives,
  author={Barrera, Jorge and Garcia, Alfredo},
  journal={IEEE Transactions on Automatic Control}, 
  title={Dynamic Incentives for Congestion Control}, 
  year={2015},
  volume={60},
  number={2},
  pages={299-310},
  doi={10.1109/TAC.2014.2348197}}

@inproceedings{povedaDistributedAdaptivePricing,
  author={Poveda, Jorge I. and Brown, Philip N. and Marden, Jason R. and Teel, Andrew R.},
  booktitle={the 56th Conference on Decision and Control}, 
  title={A class of distributed adaptive pricing mechanisms for societal systems with limited information}, 
  year={2017},
  volume={},
  number={},
  pages={1490-1495},
  doi={10.1109/CDC.2017.8263863}}

@article{rosenExistenceUniquenessEquilibrium1965a,
  title = {Existence and {{Uniqueness}} of {{Equilibrium Points}} for {{Concave N-Person Games}}},
  author = {Rosen, J. B.},
  year = 1965,
  month = jul,
  journal = {Econometrica},
  volume = {33},
  number = {3},
  pages = {520},
  issn = {00129682},
  doi = {10.2307/1911749},
  urldate = {2025-12-04},
}

@article{hoInformationStructure1981,
  author={Yu-Chi Ho and Luh, P. and Muralidharan, R.},
  journal={IEEE Transactions on Automatic Control}, 
  title={Information structure, Stackelberg games, and incentive controllability}, 
  year={1981},
  volume={26},
  number={2},
  pages={454-460},
  doi={10.1109/TAC.1981.1102652}}

@incollection{zhangSolutionStackelbergMultifollower,
  title={The solution to a kind of Stackelberg game systems with multi-follower: Coordinative and incentive},
  author={Jing, Yuan-wei and Zhang, Si-ying},
  booktitle={Analysis and optimization of systems},
  pages={593--602},
  year={2006},
  publisher={Springer}
}

@article{cruzLeaderfollowerStrategiesMultilevel1978,
  title = {Leader-Follower Strategies for Multilevel Systems},
  author = {Cruz, J.},
  year = 1978,
  month = apr,
  journal = {IEEE Transactions on Automatic Control},
  volume = {23},
  number = {2},
  pages = {244--255},
  issn = {0018-9286},
  doi = {10.1109/TAC.1978.1101716},
  urldate = {2026-01-19},
  copyright = {https://ieeexplore.ieee.org/Xplorehelp/downloads/license-information/IEEE.html},
  langid = {english}
}

@article{zhengExistenceDerivationOptimal1982a,
  title = {Existence and Derivation of Optimal Affine Incentive Schemes for {{Stackelberg}} Games with Partial Information: A Geometric Approach},
  shorttitle = {Existence and Derivation of Optimal Affine Incentive Schemes for {{Stackelberg}} Games with Partial Information},
  author = {Zheng, Ying-Ping and Basar, Tamer},
  year = 1982,
  month = jun,
  journal = {International Journal of Control},
  volume = {35},
  number = {6},
  pages = {997--1011},
  urldate = {2026-01-19},
  langid = {english}
}

@article{luhCredibilityStackelbergGames1984,
  title = {Credibility in Stackelberg Games},
  author = {Luh, Peter B. and Zheng, Ying-Ping and Ho, Yu-Chi},
  year = 1984,
  month = dec,
  journal = {Systems \& Control Letters},
  volume = {5},
  number = {3},
  pages = {165--168},
  issn = {01676911},
  doi = {10.1016/S0167-6911(84)80098-5},
  urldate = {2026-01-19},
  copyright = {https://www.elsevier.com/tdm/userlicense/1.0/},
  langid = {english}
}

@article{xiaopingliuOptimalIncentiveStrategy1992,
  title = {Optimal Incentive Strategy for Leader-Follower Games},
  author = {Liu, Xiaoping and Zhang, Siying},
  year = 1992,
  month = dec,
  journal = {IEEE Transactions on Automatic Control},
  volume = {37},
  number = {12},
  pages = {1957--1961},
  issn = {00189286},
  doi = {10.1109/9.182481},
  urldate = {2025-10-30},
  copyright = {https://ieeexplore.ieee.org/Xplorehelp/downloads/license-information/IEEE.html}
}

@article{tuPerformanceInformativenessLinearquadratic1988,
  title = {Performance versus Informativeness in Linear-Quadratic {{Gaussian}} Noncooperative Games},
  author = {Tu, M. and Papavassilopoulos, G. P.},
  year = 1988,
  month = apr,
  journal = {Journal of Optimization Theory and Applications},
  volume = {57},
  number = {1},
  pages = {161--187},
  issn = {0022-3239, 1573-2878},
  doi = {10.1007/BF00939334},
  urldate = {2026-01-19},
  copyright = {http://www.springer.com/tdm},
  langid = {english}
}

@article{laufferNoRegretLearningDynamic2024,
  title = {No-{{Regret Learning}} in {{Dynamic Stackelberg Games}}},
  author = {Lauffer, Niklas and Ghasemi, Mahsa and Hashemi, Abolfazl and Savas, Yagiz and Topcu, Ufuk},
  year = 2024,
  month = mar,
  journal = {IEEE Transactions on Automatic Control},
  volume = {69},
  number = {3},
  pages = {1418--1431},
  issn = {0018-9286, 1558-2523, 2334-3303},
  doi = {10.1109/TAC.2023.3330797},
  urldate = {2026-01-19},
  copyright = {https://ieeexplore.ieee.org/Xplorehelp/downloads/license-information/IEEE.html}
}

@book{fudenbergGameTheory1991,
  title = {Game Theory},
  author = {Fudenberg, Drew and Tirole, Jean},
  year = 1991,
  publisher = {MIT press},
  address = {Cambridge Massachusetts},
  isbn = {978-0-262-06141-4},
  langid = {english},
  lccn = {330.015 193}
}

@article{olsderPhenomenaInverseStackelberg2009,
  title = {Phenomena in {{Inverse Stackelberg Games}}, {{Part}} 1: {{Static Problems}}},
  shorttitle = {Phenomena in {{Inverse Stackelberg Games}}, {{Part}} 1},
  author = {Olsder, G. J.},
  year = 2009,
  month = dec,
  journal = {Journal of Optimization Theory and Applications},
  volume = {143},
  number = {3},
  pages = {589--600},
  issn = {0022-3239, 1573-2878},
  doi = {10.1007/s10957-009-9573-9},
  urldate = {2026-01-19},
  langid = {english}
}

@article{olsderPhenomenaInverseStackelberg2009a,
  title = {Phenomena in {{Inverse Stackelberg Games}}, {{Part}} 2: {{Dynamic Problems}}},
  shorttitle = {Phenomena in {{Inverse Stackelberg Games}}, {{Part}} 2},
  author = {Olsder, G. J.},
  year = 2009,
  month = dec,
  journal = {Journal of Optimization Theory and Applications},
  volume = {143},
  number = {3},
  pages = {601--618},
  issn = {0022-3239, 1573-2878},
  doi = {10.1007/s10957-009-9572-x},
  urldate = {2026-01-19},
  langid = {english}
}

@article{calderonePricingCoordinationOpen2014,
  title = {Pricing for {{Coordination}} in {{Open}}--{{Loop Differential Games}}},
  author = {Calderone, Daniel and Ratliff, Lillian J. and Sastry, S. Shankar},
  year = 2014,
  journal = {IFAC Proceedings Volumes},
  volume = {47},
  number = {3},
  pages = {9001--9006},
  issn = {14746670},
  doi = {10.3182/20140824-6-ZA-1003.02655},
  urldate = {2026-01-19},
  copyright = {https://www.elsevier.com/tdm/userlicense/1.0/},
  langid = {english}
}

@inproceedings{balcanCommitmentRegretsOnline2015a,
  title = {Commitment {{Without Regrets}}: {{Online Learning}} in {{Stackelberg Security Games}}},
  shorttitle = {Commitment {{Without Regrets}}},
  booktitle = {Proc. of the {{16th ACM Conference}} on {{Economics}} and {{Computation}}},
  author = {Balcan, Maria-Florina and Blum, Avrim and Haghtalab, Nika and Procaccia, Ariel D.},
  year = 2015,
  month = jun,
  pages = {61--78},
  address = {Portland Oregon USA},
  doi = {10.1145/2764468.2764478},
  urldate = {2026-01-19},
  isbn = {978-1-4503-3410-5},
  langid = {english}
}

@inproceedings{harrisRegretMinimizationStackelberg2024,
author = {Harris, Keegan and Wu, Zhiwei Steven and Balcan, Maria-Florina},
title = {Regret minimization in stackelberg games with side information},
year = {2024},
isbn = {9798331314385},
booktitle = {Proc. of the 38th International Conference on Neural Information Processing Systems (NIPS)},
articleno = {412},
numpages = {33},
address = {Vancouver, BC, Canada},
}

@article{afriatConstructionUtilityFunctions1967,
  title = {The {{Construction}} of {{Utility Functions}} from {{Expenditure Data}}},
  author = {Afriat, S. N.},
  year = 1967,
  month = feb,
  journal = {International Economic Review},
  volume = {8},
  number = {1},
  pages = {67},
  issn = {00206598},
  doi = {10.2307/2525382},
  urldate = {2026-01-19}
}

@inproceedings{kuleshovInverseGameTheory2015,
  title={Inverse game theory: Learning utilities in succinct games},
  author={Kuleshov, Volodymyr and Schrijvers, Okke},
  booktitle={International Conference on Web and Internet Economics},
  pages={413--427},
  year={2015},
  organization={Springer}
}

@article{varianNonparametricApproachDemand1982,
  title = {The {{Nonparametric Approach}} to {{Demand Analysis}}},
  author = {Varian, Hal R.},
  year = 1982,
  month = jul,
  journal = {Econometrica},
  volume = {50},
  number = {4},
  pages = {945},
  issn = {00129682},
  doi = {10.2307/1912771},
  urldate = {2026-01-19}
}

@incollection{varianRevealedPreference2006,
  title = {Revealed {{Preference}}},
  booktitle = {Samuelsonian {{Economics}} and the {{Twenty-First Century}}},
  author = {Varian, Hal R.},
  year = 2006,
  month = aug,
  pages = {99--115},
  publisher = {Oxford University Press},
  langid = {english}
}

@article{liseEstimatingGameTheoretic2001,
  title = {Estimating a {{Game Theoretic Model}}},
  author = {Lise, Wietze},
  year = 2001,
  month = oct,
  journal = {Computational Economics},
  volume = {18},
  number = {2},
  pages = {141--157},
  issn = {0927-7099, 1572-9974},
  doi = {10.1023/A:1021086215235},
  urldate = {2026-01-19},
  copyright = {https://www.springernature.com/gp/researchers/text-and-data-mining},
  langid = {english}
}

@inproceedings{ngAlgorithmsInverseReinforcementLearning,
author = {Ng, Andrew Y. and Russell, Stuart J.},
title = {Algorithms for Inverse Reinforcement Learning},
year = {2000},
isbn = {1558607072},
address = {San Francisco, CA, USA},
booktitle = {the 17th International Conference on Machine Learning (ICML)},
pages = {663–670},
numpages = {8},
}

@inproceedings{sessaContextualGamesMultiAgent2021,
author = {Sessa, Pier Giuseppe and Bogunovic, Ilija and Krause, Andreas and Kamgarpour, Maryam},
title = {Contextual games: multi-agent learning with side information},
year = {2020},
isbn = {9781713829546},
booktitle = {the 34th International Conference on Neural Information Processing Systems (NIPS)},
articleno = {1838},
numpages = {11},
address = {Canada},
}

@unpublished{vasileiou2026adaptiveincentivedesignregret,
      title={Adaptive Incentive Design with Regret Minimization}, 
      author={Georgios Vasileiou and Lantian Zhang and Silun Zhang},
      year={2026},
      note ={preprint arxiv:2604.05977},
      archivePrefix={arXiv}
}

@unpublished{vasileiou2026incentivedesignhypergradientssocialgradient,
      title={Incentive Design without Hypergradients: A Social-Gradient Method}, 
      author={Georgios Vasileiou and Lantian Zhang and Silun Zhang},
      year={2026},
      note={arxiv:2604.11346},

}

@inproceedings{changConceptInducibleRegion1982,
  title = {The {{Concept}} of {{Inducible Region}} in {{Stackelberg Games}}},
  booktitle = {Proc. of the 1982 {{American Control Conference}}},
  author = {Chang, Tsu-Shuan and Luh, Peter B.},
  year = 1982,
  month = jun,
  pages = {139--140},
  address = {Arlington, VA, USA},
  doi = {10.23919/ACC.1982.4787820},
  urldate = {2026-05-05}
}

@inproceedings{hoCredibilityRationalityPlayers1982,
  title = {Credibility and Rationality of Players Strategies in Multilevel Games},
  booktitle = {21st {{Conference}} on {{Decision}} and {{Control}} (CDC)},
  author = {Ho, Y. C. and Tolwinski, B.},
  year = 1982,
  month = dec,
  pages = {659--663},
  doi = {10.1109/CDC.1982.268223},
  urldate = {2026-05-05}
}

@article{liNonparametricEstimationMeasurement1998,
  title = {Nonparametric {{Estimation}} of the {{Measurement Error Model Using Multiple Indicators}}},
  author = {Li, Tong and Vuong, Quang},
  year = 1998,
  month = may,
  journal = {Journal of Multivariate Analysis},
  volume = {65},
  number = {2},
  pages = {139--165},
  issn = {0047259X},
  doi = {10.1006/jmva.1998.1741},
  urldate = {2026-05-07},
  copyright = {https://www.elsevier.com/tdm/userlicense/1.0/},
  langid = {english}
}

@article{hausmanIdentificationEstimationPolynomial1991,
  title = {Identification and Estimation of Polynomial Errors-in-Variables Models},
  author = {Hausman, Jerry A. and Newey, Whitney K. and Ichimura, Hidehiko and Powell, James L.},
  year = 1991,
  month = dec,
  journal = {Journal of Econometrics},
  volume = {50},
  number = {3},
  pages = {273--295},
  issn = {03044076},
  doi = {10.1016/0304-4076(91)90022-6},
  urldate = {2026-05-07},
  copyright = {https://www.elsevier.com/tdm/userlicense/1.0/},
  langid = {english}
}

@ARTICLE{umar26ecodriving,
  author={Niazi, M. Umar B. and Cho, Jung-Hoon and Dahleh, Munther A. and Dong, Roy and Wu, Cathy},
  journal={IEEE Transactions on Control of Network Systems}, 
  title={Eco-Driving Incentive Mechanisms for Mitigating Emissions in Urban Transportation}, 
  year={2026},
  volume={13},
  number={1},
  pages={166-178},
  doi={10.1109/TCNS.2025.3623943}}

@ARTICLE{satchidanandan23atwostage,
  author={Satchidanandan, Bharadwaj and Roozbehani, Mardavij and Dahleh, Munther A.},
  journal={IEEE Control Systems Letters}, 
  title={A Two-Stage Mechanism for Demand Response Markets}, 
  year={2023},
  volume={7},
  number={},
  pages={49-54},
  doi={10.1109/LCSYS.2022.3186654}}

@article{bonatti24sellinginfo,
title = {Selling information in competitive environments},
journal = {Journal of Economic Theory},
volume = {216},
pages = {105779},
year = {2024},
issn = {0022-0531},
author = {Alessandro Bonatti and Munther Dahleh and Thibaut Horel and Amir Nouripour}
}

@misc{zhang2026stochasticadaptivecontrolsystems,
      title={Stochastic Adaptive Control for Systems with Nonlinear Parameterization: Almost Sure Stability and Tracking}, 
      author={Lantian Zhang and Bo Wahlberg and Silun Zhang},
      year={2026},
      note={preprint arXiv:2604.06980},
}

@inproceedings{vanweerelt2025selfidentifyinginternalmodelbasedonline,
  title = {Self-Identifying Internal Model-Based Online Optimization},
  booktitle = {IFAC},
  author={Wouter J. A. van Weerelt and Lantian Zhang and Silun Zhang and Nicola Bastianello},
  year = 2026,
  month = jul
}

@misc{zhang2025onlinelearningnonlineardynamical,
      title={Online Learning for Nonlinear Dynamical Systems without the I.I.D. Condition}, 
      author={Lantian Zhang and Silun Zhang},
      year={2025},
      note={preprint arXiv:2504.02995}
}

@misc{chen2025activeinversemethodsstackelberg,
      title={Active Inverse Methods in Stackelberg Games with Bounded Rationality}, 
      author={Jianguo Chen and Jinlong Lei and Biqiang Mu and Yiguang Hong and Hongsheng Qi},
      year={2025},
      archivePrefix={arXiv},
      primaryClass={cs.GT}
}

@ARTICLE{zhang21modelingcollective,
  author={Zhang, Silun and Ringh, Axel and Hu, Xiaoming and Karlsson, Johan},
  journal={IEEE Transactions on Automatic Control}, 
  title={Modeling Collective Behaviors: A Moment-Based Approach}, 
  year={2021},
  volume={66},
  number={1},
  pages={33-48},}

@article{zhang20anintrinsicapproach,
title = {An intrinsic approach to formation control of regular polyhedra for reduced attitudes},
journal = {Automatica},
volume = {111},
pages = {108619},
year = {2020},
author = {Silun Zhang and Fenghua He and Yiguang Hong and Xiaoming Hu}
}

@article{zhang18intrinsictetrahedron,
title = {Intrinsic tetrahedron formation of reduced attitude},
journal = {Automatica},
volume = {87},
pages = {375-382},
year = {2018},
author = {Silun Zhang and Wenjun Song and Fenghua He and Yiguang Hong and Xiaoming Hu}
}

\end{document}